\documentclass[a4paper,10pt,reqno]{amsart}
\usepackage{amssymb,latexsym,amsmath,amsfonts}
\usepackage{amsthm}
\usepackage[mathscr]{eucal}
\usepackage{rotating}
\usepackage{multirow}

\numberwithin{equation}{section}
\newtheorem{theorem}{Theorem}
\newtheorem{lemma}{Lemma}

\theoremstyle{definition}
\newtheorem{definition}[theorem]{Definition}

\newtheorem{example}[theorem]{Example}

\theoremstyle{remark}

\newcommand {\sgn}{\mbox{sgn}}

\begin{document}

\title[Unboundedness Properties in Volterra Summation Equations]
{
On the admissibility of unboundedness properties of forced  deterministic and stochastic sublinear Volterra summation equations}

\author{John A. D. Appleby}
\address{School of Mathematical
Sciences, Dublin City University, Glasnevin, Dublin 9, Ireland}
\email{john.appleby@dcu.ie} 

\author{Denis D. Patterson}
\address{School of Mathematical
Sciences, Dublin City University, Glasnevin, Dublin 9, Ireland}
\email{denis.patterson2@mail.dcu.ie}

\thanks{John Appleby gratefully acknowledges the support of a SQuaRE activity entitled ``Stochastic stabilisation of limit-cycle dynamics in ecology and neuroscience'' funded by the American Institute of Mathematics. Denis Patterson is supported by the Irish Research Council grant GOIPG/2013/402 ``Persistent and strong dependence in growth rates of solutions of stochastic and deterministic functional differential equations with applications to finance''.
} 
\subjclass{39A22, 39A50, 39A60, 62M10, 91B70.}
\keywords{Volterra summation equation, growth rates, growth of partial maxima, bounded solutions, unbounded solutions.}
\date{1 July 2016}

\begin{abstract}
In this paper we consider unbounded solutions of perturbed convolution Volterra summation equations. The equations studied are asymptotically sublinear, in the sense that the state--dependence in the summation is of smaller than linear order for large absolute values of the state. When the perturbation term is unbounded, it is elementary to show that solutions are also. The main results of the paper are mostly of the following form: the solution has an additional unboundedness property $U$ if and only 
if the perturbation has property $U$. Examples of property $U$ include monotone growth, monotone growth with fluctuation, fluctuation on $\mathbb{R}$ without growth, existence of time averages. We also study the connection between the times at which the perturbation and solution reach their running maximum, and the connection between the size of signed and unsigned running maxima of the solution and forcing term.
\end{abstract}

\maketitle

\section{Introduction}
In this paper we determine conditions under which the solutions of a forced Volterra summation equation 
of the form
\begin{equation} \label{eq.xintro}
x(n+1)=H(n+1)+\sum^{n}_{j=0}k(n-j)f(x(j)),\quad n\geq0.
\end{equation}
have bounded and unbounded solutions. It is assumed that $f(x)=o(x)$ as $|x|\to\infty$ and $k$ is summable. These properties of $f$ and $k$ ensures the boundedness of the Volterra equation under moderate disturbances in the system: in fact, we show that $x$ is unbounded 
if and only if the external force $H$ is. Once that has been done, the bulk of the paper is devoted to exploring refinements of these unboundedness results. Generally speaking, we find that if $H$ has an additional unboundedness property $U$, then $x$ inherits property $U$. In many cases, the converse is also true.  

In this sense, our results are related to the theory of admissibility for Volterra summation equations 
(cf. Baker and Song~\cite{bakersong04,bakersong06}, Reynolds~\cite{rey}, Reynolds and Gy\H{o}ri~\cite{gyrey09,gyrey10}, 
Gy\H{o}ri, Horv\'ath~\cite{gyhorv08,gyhorv08b} and Awwad and Gy\H{o}ri~\cite{gyaw12}) and Volterra integral equations (see e.g., 
~\cite{ja2010,jagydr11} inspired by work of Perron~\cite{perron} and Corduneanu~\cite{cord1973}). In many of the discrete papers the theme of research is often (but certainly not exclusively) to consider the following problem 
\[
x(n)=H(n)+(Vx)(n), \quad n\geq 0
\] 
where $V$ is a linear Volterra operator and $H$ is an $\mathbb{R}^p$--valued sequence. The solution $x$ is a sequence in $\mathbb{R}^p$. The form of $V$ is 
\[
(Vg)(n):=\sum_{j=0}^n K(n,j)g(j)=:(K\star g)(n), \quad n\geq 0
\]
for any $g:\mathbb{Z}^+\to \mathbb{R}^p$, where $K$ is fixed and $K:\mathbb{Z}^+\times \mathbb{Z}^+ \to \mathbb{R}^{p\times p}$. If the operator $V$ (or equivalently, the matrix $K$) has the appropriate properties, and $H$ is a sequence with a nice asymptotic property characterised by a sequence space $N$, $x$ will lie in the space $N$. 
In such situations, it will be the case that $g\mapsto K\star g$ takes $N$ to $N$ and we say that the mapping has an admissibility property. Sometimes, it can even happen that properties of $x$ may be enhanced.  

Much effort has gone into investigating nice spaces $N$, such as bounded, convergent, periodic, or $\ell^q$ spaces of sequences.     
Instead, the results in this paper have a rather different flavour, as the types of external force $H$ are either highly irregular (for example stochastic) or are unbounded or growing. One consequence of this is that it becomes reasonable to track new types of property, such as the the size of the largest fluctuations to date (both positive, negative, and in absolute terms), the times at which sequences $H$ and $x$ reach their maximum value to date, growth rates and growth bounds, or indeed time averages of functions of the sequences 
(which may be finite even though those sequences are unbounded). Therefore, in a sense, our results are of greater applicability in economics or finance, rather than engineering, because disturbances to the system are less likely to be regular or bounded in applications in the former disciplines, while in engineering, we cannot expect good performance if disturbances are irregular or unbounded. 

Incidentally, we note if $f$ is linear, and $H$ is in the class of stationary ARMA processes, then $x$ is a stationary 
process provided the equation without noise is stable. The statistical behaviour of such linear models is well known, and therefore is expressly not the subject of this work; however, path properties are less well understood, and in a parallel work we explore the properties of \eqref{eq.xintro} using the same framework as in the present paper. Once again, we observe the pattern of this paper that the unboundedness property $U$ of the external force $H$ transmits to the solution $x$, and indeed, owing to the linearity of the problem the connection with the above--cited works on admissibility is more tangible: indeed, to a certain degree our contribution in the linear case is merely to identify nice spaces, and then to check by direct calculation, or by appeal to the general theory, that admissibility properties hold. In this work and its linear counterpart, we have focused on the convolution equation with a view towards applications and statistical inference: in this we merely adopt the time--honoured \emph{ceteris paribus} perspective of the economist in trying to keep the structure of the model time--independent (apart from possible external shocks), and the desire of the statistician to consider models with time--invariant statistical properties. These constraints, which are purely driven by applications, lead us to study the (asymptotically) autonomous convolution operator in \eqref{eq.xintro}. However, nothing prevents the results in this paper, as well as the convolutional  linear case, being developed for non--convolution Volterra equations, nor indeed the extension of the analysis to deal with the general $p$--dimensional case. Notwithstanding this, we feel that a substantial challenge has already been met by analysing
successfully the scalar case.      

Some of our results require the sequence $H$ to be stochastic, but not all do. However, most of our results are inspired by a unspoken assumption that $H$ could be stochastic, and that interesting properties of random processes can be assumed about $H$.
For this reason, we sometimes talk about $H$ as though it could be stochastic, and motivate our results by appealing to 
intuition about stochastic processes: therefore we freely use terminology like ``shock'', ``noise'' and ``stochastic process''
when talking about $H$. In our precise mathematical results, though, $H$ can be irregular or unbounded, but deterministic (i.e., $H$ could be a chaotic sequence), and our arguments would still be valid. In this sense, our analysis asks how the system modelled by the Volterra equation adjusts to shocks with certain characteristics, irrespective of whether they are stochastic or not.  

We first show that $H$ is unbounded, the maxima of $|H|$ and $|x|$ grow at the same rate, so that
\[
\lim_{n\to\infty} \frac{\max_{0\leq j\leq n} |x(j)|}{\max_{1\leq j\leq n} |H(j)|}=1,
\] 
This shows that shocks to the system, or growth from an external source, are not amplified nor damped by the system. However, it does not yet show whether unbounded fluctuations or growth in $H$ give rise to fluctuations or growth in $x$, but merely that the absolute size of the running maxima grow at the same rate. We also prove that the largest absolute fluctuations in $H$ to-date cause those in $x$.  
We do this by studying the the times $t_{n}^x$ and $t_{n}^H$ at which of the largest absolute fluctuations in $x$ and $H$ up to time $n$ occur, and show for example that $|x(t_n^x)|/|H(t_n^x)|\to 1$ and $|H(t_n^x)|/|H(t_n^H)|\to 1$ as $n\to\infty$. Our analysis to prove these results, and almost all others, is embarrassingly elementary and hinges mostly on careful analysis of the running maximum of sequences. Indeed, it is not unreasonable to state that almost all the analysis involves little more than taking maxima on both sides of \eqref{eq.xintro}, or obvious rearrangements of \eqref{eq.xintro} and parts thereof.  

It should be noted that the unboundedness of the sequences as described by these results does not make an assumption about whether $H$ grows or fluctuates. We show that essentially monotone growth in the forcing term $H$ produces monotone growth in $x$, and that $x$ is asymptotic to $H$; if the growth in $H$ is non--monotone, but a monotone trend about which $H$ grows can be identified, $x$ inherits this property also. 
We also study what happens when $H$ has large positive or large negative fluctuations. 
The main result shows that the dominating large fluctuations (positive or negative) in $H$ produce large positive or large negative fluctuations in $x$ of the same order of magnitude as those in $H$. It is also shown that when the large positive and negative fluctuations of $H$ are of the same order of magnitude, then $x$ has both large positive and negative fluctuations, and these follow the asymptotic growth of the respective fluctuations in $H$.  


Our first main results about absolute fluctuations show that 
\[
\max_{1\leq j\leq n} |x(j)|\sim \max_{1\leq j\leq n} |H(j)|, \quad \text{as $n\to\infty$}, 
\]
so in order to understand the growth in the partial maximum of $x$, it is necessary to determine the 
growth rate of $\max_{j\leq n} |H(j)|$. However, it is more straightforward, especially in the case when $H$ is an independent and identically distributed (iid) sequence, to try to find a deterministic sequence $a(n)$ which is increasing at the same rate as $H$ in absolute value terms. This is true because the Borel--Cantelli lemmas will only yield upper bounds for the growth in the maximum, while they can give upper and lower bounds in the iid case when the auxiliary sequence $a$ is introduced. 
In order to be of greater use for stochastic systems, we therefore prove that, for general sequences if 
$\limsup_{n\to \infty}|H(n)|/a(n)=\rho\in[0,\infty]$ then $\limsup_{n\to\infty}|x(n)|/a(n)=\rho$. This general result does not employ stochastic arguments.  

The final results examine the boundedness of time averages of the same function $\varphi$ of $H$ and $x$ and how they are related even though the sequences $H$ (and therefore $x$) are tacitly assumed to be unbounded (it is trivially the case that time averages of any well--behaved function of $H$ and $x$ will be finite if both sequences are bounded). In particular, we 
show the equivalence of these ``$\varphi$-moments'' of $H$ and $x$ in the case where $\varphi$ is an increasing and convex function. This covers important examples such as the finiteness of time averages, variances, skewness, and kurtosis for example (by taking $\varphi(x)=x^p$ for $p=1,2,3,4$). However, for ``thin tailed'' distributions, such as Gaussian distributions, we can consider non--power convex functions. The parameterised family $\varphi(x)=e^{ax^2}$ for $a>0$ is useful in the Gaussian case, for instance.    

\section{Mathematical Preliminaries}
\subsection{Notation and assumptions on data}
We now give the equation we study and impose hypotheses on the data. Suppose 
\begin{equation} \label{eq.f1}
f\in C(\mathbb{R};\mathbb{R}) 
\end{equation}
with 
\begin{equation} \label{eq.f2}
\lim_{|x|\to\infty}\frac{f(x)}{x}=0, 
\end{equation} 
and that $k=(k(n))_{n\geq 0}$ is a sequence with 
\begin{equation} \label{eq.kl1}
k\in l^1(\mathbb{N}).
\end{equation}
We find it useful to define
\begin{equation} \label{eq.k1}
\left|k\right|_1:=\sum^{\infty}_{j=0}\left|k(j)\right|<+\infty. 
\end{equation}
Let $(H(n))_{n\geq1}$ be a real sequence and let $(x(n))_{n\geq0}$ be another real sequence uniquely defined by
 \begin{align} \label{eq.x} 
x(n+1)=H(n+1)+\sum^{n}_{j=0}k(n-j)f(x(j)),\qquad n\geq0, \quad  x(0)=\xi\in \mathbb{R}.
\end{align}
We introduce the notation
\begin{align} \label{eq.Hstar} 
H^*(n):&=\max_{1\leq j \leq n}|H(j)|.
\end{align}
We also define
\begin{align} \label{eq.xstar} 
x^*(n):&=\max_{ 0 \leq j \leq n }|x(j)|.
\end{align}
The times to date at which these sequences reach their running maximums are also of interest, are denoted by
$t_{n}^x$, $t_{n}^H$ $\in{0,..,n}$, and defined by 
\begin{align} \label{eq.tstarnx} 
|x(t_{n}^x)|&=\max_{ 0 \leq j \leq n }|x(j)| \\
\label{eq.tstarnH}
|H(t_{n}^H)|&=\max_{ 1 \leq j \leq n }|H(j)|. 
\end{align}

We are generally interested in the behaviour of the solution $x$ of \eqref{eq.x} when $x$ becomes large (in absolute value terms), 
as when this happens, the solution is undergoing a large fluctuation of growth. We assume that $f$ is nonlinear, and are therefore particularly interested in the behaviour of $f(x)$ for large $|x|$. 
We assume that the impact of the past is of smaller that linear order for large $x$; the other extreme would be to consider when $f(x)/x\to\pm\infty$ as $x\to\pm\infty$, which we do not do here. The assumption that $f$ is sublinear in the sense of \eqref{eq.f2} achieves this. One of the effects of this assumption 
\eqref{eq.f2} is that the equation \eqref{eq.x} will be quite stable with respect to moderate disturbances. This is attractive, because this is  not always the case if $f$ is linear or obeys $f(x)/x\to\infty$ as $x\to\infty$. 

\subsection{Time indexing in the Volterra equation}
Before listing and discussing the main results, we stop to comment on the time indexing used in \eqref{eq.x}. Many authors choose to write 
\begin{equation} \label{eq.x2}
x(n+1)=H(n)+\sum_{j=0}^n k(n-j)f(x(j)), \quad n\geq 0
\end{equation}
or even study the equation 
\begin{equation} \label{eq.x3}
x(n)=H(n)+\sum_{j=0}^n k(n-j)f(x(j)), \quad n\geq 0
\end{equation}
(especially in the case that $f(x)=x$, and impose a solvability condition in order to ensure the existence of a solution of \eqref{eq.x3}). We prefer to express the equation in the form \eqref{eq.x}, however, for a technical reason related to the situation when $H$ is a stochastic process, and also from the perspective of viewing \eqref{eq.x} as modelling an economic system in which agents can observe the state of the system $x$ up to the current time $n$, but cannot know the future values of the system $\{x(j):j\geq n+1\}$ with certainty, owing to the randomness in $H$.

The appropriate probabilistic formulation of \eqref{eq.x} in the case that $H$ is a stochastic process is the following, and we will adopt this formulation. Suppose that $(\Omega, \mathcal{F}, (\mathcal{F}(n))_{n\geq 0},\mathbb{P})$ is an extended probability triple. We suppose that $(\mathcal{F}(n))_{n\geq 0}$ is a filtration (an increasing sequence of $\sigma$--algebras) with $\mathcal{F}\supset \mathcal{F}(n)$ for each $n\geq 0$ and $\mathcal{F}(n+1)\supset \mathcal{F}(n)$ for $n\geq 0$. We suppose that $H(n)$ is $\mathcal{F}(n)$--measurable for $n\geq 1$; the process $H$ is then said to be adapted to the filtration $(\mathcal{F}(n))_{n\geq 0}$, or adapted for short. 

Remembering that $\mathcal{F}(n)$ represents the information available about the system at time $n$, 
and granted the assumption \eqref{eq.f1} that $f$ is continuous and $k$ is deterministic, then in equation \eqref{eq.x},  
$x(n)$ is $\mathcal{F}(n)$--measurable for each $n\geq 1$, so $x$ is adapted. Therefore, the value of $x(n+1)$ is not known with certainty at time $n$, but is at time $n+1$, as soon as $H(n+1)$ has been observed. In the formulation \eqref{eq.x2}, however, if we still suppose that $H(n)$ is $\mathcal{F}(n)$--measurable, then $x(n+1)$ is known with certainty at time $n$. A process with this property is called previsible or predictable, and typically we would not wish to assume a priori in a discrete--time economic model that a publicly visible state of the system (such as a stock price, interest rate, or important economic indicator) could be predicted with certainty one time--step ahead by agents possessing only publicly available information. Therefore, for such economic models \eqref{eq.x} is preferable to \eqref{eq.x2}. 

The equation \eqref{eq.x3} shares with \eqref{eq.x} advantageous adaptedness properties, provided 
\begin{equation} \label{eq.solve}
\text{For every $y\in\mathbb{R}$ there is $x\in\mathbb{R}$ such that $x-k(0)f(x)=y$}. 
\end{equation} 
If \eqref{eq.solve} holds, then there exists an adapted process $x$ satisfying \eqref{eq.x3}. The process is unique if there
is a unique solution to the nonlinear equation in \eqref{eq.solve} for each $y\in \mathbb{R}$. This is certainly true for all $|y|$ sufficiently large, under the sublinearity hypothesis \eqref{eq.f2}. However, we slightly prefer the formulation \eqref{eq.x} from a modelling perspective, as the summation term can represent the impact of agents on the system at time $n+1$, based on actions they make using any subset of publicly available information up to time $n$. In \eqref{eq.x3}, the value of the system at time $n$ appears both in the summation term (which we view as including information causing the future value of the output $x$) and as ``output'' itself at time $n$. While this does not violate causality in the model, it does impose on the system the additional mathematical constraint \eqref{eq.solve}
as well as its economic interpretation. In the meta--model we describe, \eqref{eq.solve} amounts to agents instantaneously solving nonlinear equations which may involve the actions of other agents at the same instant. We can, and do, avoid such problems by studying \eqref{eq.x} instead. 

\subsection{Motivation from economics}
We do not have any particular economic model in mind in formulating this equation, but merely try to capture interesting dynamical effects which seem to us to arise in economics, although we mention three situations where equations of the type \eqref{eq.x} may be germane. Our general question is: if we have a system which, although small, is relatively 
robust (in being able to handle moderate shocks), how does that system react to strong shocks or strong and persistent 
external forces? Do the shocks persist, or fade rapidly? How does the system adjust to persistent and possibly positive changes in 
the external environment? How does the memory that the system has of its own past effect the transmission of the external forces through the system over time? We are also interested in tracking quantities and time at which the solution reaches its maximum to date: such times and quantities are thought be investigators to be of psychological importance to agents.  

With these questions in mind, the structure of \eqref{eq.x} becomes more apparent. The state of the (small) system at time $n$ is $x(n)$. 
The external force or shock at time $n$ is $H(n)$. Although the form of \eqref{eq.x} does not preclude that $H(n+1)$ can be a function of 
$x$ at past states, it is tacit in our formulation that $H(n+1)$ is independent of $\{x(j):j\leq n\}$. Therefore, while $H$ influences 
$x$, $x$ does not influence $H$. In this sense, we view $x$ as modelling a ``small'' system: the external environment influences the system, but the influence of the system on the external environment is small (and modelled as being absent).

Another interpretation of $H$ is that it models the effect of ``news'' or hard--to--model external effects on the system. This is a common feature in autoregressive time series models, for instance: the system adjusts according to the previous values of the system, and is in addition, subjected to a stochastic shock which cannot be predicted with certainty based on the past states of the system. 

The Volterra term has the following interpretation: as usual for Volterra equations, we take the view that all past terms have an effect on the system, but that terms in the distant past have a vanishing impact (so $k\in \ell^1(\mathbb{N})$). The sublinearity in $f$ makes the system very robust to moderate shocks, as demonstrated by Theorem~\ref{prop.bounded} below. This is true without making any assumption on the size of $k$: in contrast, in the linear case, restrictions on $k$ would be necessary in order for solutions to remain bounded for all bounded $H$. 
Of course, if the state is an asset price or income, the system's smallness and sublinearity in $f$ is a disadvantage: it is unable to grow unboundedly by its own means (or exhibit so--called endogeneous growth). We remark in passing that if one desires endogeneous 
growth in the unperturbed system, this can be achieved by considering the difference--summation equation 
\begin{equation}  \label{eq.diffsumm}
x(n+1)-x(n)=\sum_{j=0}^n k(n-j)f(x(j)), \quad n\geq 0,
\end{equation}
If we still assume that $k\in \ell^1(\mathbb{N})$, $f$ is sublinear in the sense of \eqref{eq.f2}, and $f:(0,\infty)\to (0,\infty)$ 
and $k$ is non--negative, then all solutions of \eqref{eq.diffsumm} with positive initial condition grow to infinity at a rate 
determined by $f$. A continuous analogue of \eqref{eq.diffsumm} with these positivity assumptions is considered in~\cite{sublinear2015}.

Some existing economic models take the form of \eqref{eq.x} or are closely related to it. In the classic dynamic linear multidimensional Leontief input--output model (see e.g,~\cite{leontief1,leontief2}), $H$ is the final demand, and $x$ the output, and the Volterra term is so--called intermediate demand. The sublinearity assumption means that the (one--commodity) economy exhibits diminishing marginal returns to scale. The presence of time lags signifies that production can take many time steps to enter the final demand. 
Early examples of nonlinear input--output models include~\cite{Chander, sandberg}. 

We can think of the model in terms of an inefficient market for an  asset, where new signals about the price arrive $H(n+1)$ which drive the price, but the agents use past information about the price to determine their demand, and this also has an impact on the price. The sublinearity in this instance suggests that the traders become conservative in their net demand, relative to the price level, when the market is far from equilibrium. Our results suggest that large shocks to the price transmit quickly to the system in that case, despite the fact that the traders may use a lot of information about the past of the system. Models of this type include~\cite{anh,aik,jajd2011,jamrcs13}.  

Our model also takes inspiration from the important class of (linear) autoregressive models. The class of $\text{ARMA}(p,q)$ models 
(see e.g., \cite{BrockDavis}), for instance, have the form
\[
x(n+1)=H(n+1)+\sum_{j=n-p}^n k(n-j)x(j)
\] 
where $H$ is a stationary process which has non--trivial autocorrelation at $q\in \mathbb{Z}^+$ time lags (i.e., $\text{Cov}(H(n),H(n+q))\neq 0$ for all $n\geq 0$), and trivial autocorrelations for time lags greater than $q$ (i.e., $\text{Cov}(H(n),H(n+k))=0$ for all $n\geq 0$ and $k>q$). In this case, the equation has bounded memory of the previous $p$ values of the state. 

However so--called $\text{AR}(\infty)$ models are also considered, in which the entire history of the process is important. One motivation to do this is to introduce so--called long range dependence or long memory into the system. Classic papers finding evidence 
for slow decay in correlations in tree-ring data series, wheat
market prices, stock market and foreign exchange returns are Baillie \cite{baillie} and  Ding and Granger
\cite{zdcg:1996}). Mathematical models in economics based on $\text{AR}(\infty)$ processes have been developed.  
For instance, Kirman and Teyssi\`ere \cite{akgt:2002a, akgt:2002b} develop a time series model which arises from a market composed 
trend following and value investors which possesses long memory characteristics in the differenced log returns
of price processes associated with these models. Appleby and Krol
\cite{appkrol2} analyse the long memory properties of a linear
stochastic Volterra equation in both continuous and discrete time,
with conditions for both subexponential rates of decay and
arbitrarily slow decay rates in the autocovariance function being
characterised in terms of the decay of the kernel of the Volterra
equation. A continuous--time infinite history financial market model
is discussed in Anh et al. \cite{anh, aik}, which generalises the classic Black-Scholes model, and exhibits long memory properties. 
All these papers study equations closely related to the classic $\text{AR}(\infty)$ model: 
\[
x(n+1)=H(n+1)+\sum_{j=-\infty}^n k(n-j)x(j), \quad n\in \mathbb{Z}.
\]   
If one chooses to subsume the history of the process up to time $n=0$ in the forcing term,
and further assume (for example) that $\{x(n):n\leq 0\}$ is bounded, then the sequence 
\[ 
\tilde{H}(n+1):=H(n+1)+\sum_{j=-\infty}^{-1} k(n-j)x(j), \quad n\geq 0
\]
is well--defined and we have 
\[
x(n+1)=\tilde{H}(n+1)+\sum_{j=0}^n k(n-j)x(j), \quad n\geq 0,
\]
which is in the form of \eqref{eq.x} with $f(x)=x$. Furthermore, if the history of $x$ is bounded, $\tilde{H}(n)-H(n)$ is bounded, so the unboundedness properties of the adjusted perturbation $\tilde{H}$ and the original perturbation $H$ are the same.
 
We remark that stationarity in $H$ in these linear models does not necessarily entail stationarity in $x$: in the case of the $\text{ARMA}(p,q)$ model for example, it relies on the $\ell^1$--stability of the resolvent   
\[
r(n+1)=\sum_{j=n-p}^n k(n-j)r(j), \quad n\geq 0; \quad r(0)=1; \quad r(n)=0 \quad n<0,
\]
which is equivalent to all the zeros of the polynomial equation 
\[
z^{p+1} = \sum_{l=0}^p k(l)z^{p-l}
\]
lying in $\{z\in \mathbb{C}:|z|<1\}$. Although we have not proven it in this paper, we conjecture that stationarity in 
$H$ in \eqref{eq.x} implies asymptotic stationarity in $x$ in \eqref{eq.x}. Such a result would be in line with other results we observe here, namely that an unboundedness property $U$ in $H$ is inherited by $x$. 

\section{Main Results}
In this section we list and discuss the main results of the paper. Proofs are largely postponed to the end.
\subsection{Bounded and unbounded solutions}
Our first main result shows that if $H$ is a bounded sequence, then so is $x$, but that if $H$ is unbounded, $x$ must be also.
\begin{theorem} \label{prop.bounded}
Suppose that $f$ obeys \eqref{eq.f1} and \eqref{eq.f2}, that $k$ obeys \eqref{eq.kl1} and that $x$ is the solution of \eqref{eq.x}.
Define $H^\ast$ and $x^\ast$ as in \eqref{eq.Hstar} and \eqref{eq.xstar}. 
\begin{enumerate}
	\item[(a)] If $\lim_{n\to\infty} H^\ast(n) \in [0,\infty)$, then $\lim_{n\to\infty} x^\ast(n) \in [0,\infty)$.
	\item[(b)] If $\lim_{n\to\infty} H^\ast(n)=+\infty$, then $\lim_{n\to\infty} x^\ast(n)  = +\infty$. 
\end{enumerate}
\end{theorem}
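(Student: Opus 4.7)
The plan is to exploit the sublinearity hypothesis \eqref{eq.f2} together with continuity \eqref{eq.f1} to obtain, for any prescribed $\varepsilon>0$, a constant $M(\varepsilon)>0$ such that
\[
|f(x)|\le \varepsilon|x|+M(\varepsilon)\quad\text{for all }x\in\mathbb{R}.
\]
This is a standard consequence of \eqref{eq.f1}--\eqref{eq.f2}: \eqref{eq.f2} gives the linear part outside a large ball, and continuity on the compact complementary ball supplies a finite additive constant. Call this the affine majorant for $|f|$.

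For part (a), I would take absolute values in \eqref{eq.x}, apply the affine majorant termwise inside the convolution, and use $|k|_1$ to estimate the sum, obtaining
\[
|x(n+1)|\;\le\;H^\ast(n+1)+\varepsilon|k|_1\, x^\ast(n)+M(\varepsilon)|k|_1.
\]
Since the right--hand side is non--decreasing in $n$ and dominates $|x(0)|=|\xi|$ once enlarged by $|\xi|$, taking the maximum over $0\le j\le n+1$ on the left yields
\[
x^\ast(n+1)\;\le\;\max\!\bigl(|\xi|,\, H^\ast(n+1)+\varepsilon|k|_1\, x^\ast(n)+M(\varepsilon)|k|_1\bigr).
\]
Assuming $H^\ast(n)\to H^\ast_\infty\in[0,\infty)$, I choose $\varepsilon$ so that $\varepsilon|k|_1<1$ (say $\varepsilon=1/(2|k|_1)$); then a standard induction on the contracting recursion $u_{n+1}\le C+\tfrac{1}{2}u_n$ shows $x^\ast(n)\le \max(|\xi|,2C)$ for all $n$, with $C:=H^\ast_\infty+M(\varepsilon)|k|_1$. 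Hence $x^\ast$ is bounded, and since it is monotone non--decreasing its limit lies in $[0,\infty)$.

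For part (b), I would argue by contrapositive: if $x^\ast$ has a finite limit $L$, continuity of $f$ on the compact set $[-L,L]$ supplies $F_L:=\sup_{|y|\le L}|f(y)|<\infty$, so
\[
\Bigl|\sum_{j=0}^{n}k(n-j)f(x(j))\Bigr|\;\le\;F_L\,|k|_1\qquad\text{for every }n\ge 0.
\]
Rearranging \eqref{eq.x} gives $|H(n+1)|\le |x(n+1)|+F_L|k|_1\le L+F_L|k|_1$, so $H^\ast$ is bounded, contradicting $H^\ast(n)\to+\infty$.

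The only delicate step is the affine majorant, which hinges on using both \eqref{eq.f1} and \eqref{eq.f2}; the remainder is bookkeeping on the contractive recursion, with the choice $\varepsilon|k|_1<1$ decoupling the Volterra feedback from the forcing. I do not expect any serious obstacle, since nothing beyond elementary estimates on the running maximum is required, exactly as the authors advertise in the introduction.
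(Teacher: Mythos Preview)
Your proposal is correct and follows essentially the same route as the paper: both parts rest on the affine majorant $|f(x)|\le \varepsilon|x|+M(\varepsilon)$, an absolute--value estimate of the convolution by $|k|_1$, and a contraction in $x^\ast$ obtained by choosing $\varepsilon|k|_1<1$. The only cosmetic differences are that the paper closes part~(a) by rearranging to $(1-\varepsilon|k|_1)x^\ast(n)\le |x(0)|+|k|_1M(\varepsilon)+H^\ast(n)$ rather than iterating your recursion (this explicit inequality is reused in later proofs), and in part~(b) it re--applies the affine majorant instead of your direct compactness bound $F_L$; both variants are equivalent for the purposes of this theorem.
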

\subsection{Growth rates in the partial maximum}
Theorem~\ref{prop.bounded} shows that solutions of \eqref{eq.x} are bounded if and only if $H$ is bounded. 
We have already noted (for growth arising from dynamic input--output models, or for unbounded shocks that would result in a time 
series model if $H$ were a stationary process) that for the applications we have mentioned, it is more natural to consider unbounded $H$. In this case $\lim_{n\to\infty} H^\ast(n)=+\infty$ and therefore $\lim_{n\to\infty} x^\ast(n)=+\infty$.

It is now a natural question to ask: if $H^\ast(n)\to\infty$ as $n\to\infty$, how rapidly will $x^\ast(n)\to\infty$ as $n\to\infty$? 
Our first result in this section shows that both maxima grow at the same rate. We also study the relationship between the times at which 
$|x|$ and $|H|$ reach their running maxima.
\begin{theorem} \label{theorem.growthmaxima}
 $f$ obeys \eqref{eq.f1} and \eqref{eq.f2} and $k$ obeys \eqref{eq.kl1}. Suppose that $x$ obeys \eqref{eq.x} and that $H$ obeys $\lim_{n\to\infty} H^\ast(n)=+\infty$. 
\begin{itemize}
\item[(i)]
$\lim_{n \to +\infty}\max_{{0}\leq{j}\leq{n}}\left|x(j)\right|=\infty$ \text{and} 
\[
\lim_{n \to +\infty} \frac{\max_{{0}\leq{j}\leq{n}}\left|x(j)\right|}{\max_{{1}\leq{j}\leq{n}}\left|H(j)\right|}=1.
\]
\item[(ii)] Let $t^H$ be defined by \eqref{eq.tstarnH}, and $t^x$  defined by \eqref{eq.tstarnx}. Then 
\begin{enumerate}
\item[(a)]
\[
\lim_{n \to +\infty}\frac{\left|x(t^H_n)\right|}{\left|H(t^H_n)\right|}=1,
\quad 
\lim_{n \to +\infty}\frac{\left|x(t^x_n)\right|}{\left|x(t^H_n)\right|}=1.
\]	
\item[(b)] 
\[
\lim_{n \to +\infty}\frac{\left|x(t^x_n)\right|}{\left|H(t^x_n)\right|}=1, \quad
\lim_{n \to +\infty}\frac{\left|H(t^x_n)\right|}{\left|H(t^H_n)\right|}=1.
\]
\end{enumerate}
\end{itemize}
\end{theorem}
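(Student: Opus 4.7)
The plan is to exploit the sublinearity of $f$ via the standard ``linear growth bound'' trick: for any $\varepsilon>0$, by \eqref{eq.f2} there exists $M=M(\varepsilon)>0$ such that $|f(x)|\le\varepsilon|x|$ whenever $|x|>M$, and since $f\in C(\mathbb{R};\mathbb{R})$ there is $K_\varepsilon:=\max_{|x|\le M}|f(x)|$ with $|f(x)|\le K_\varepsilon+\varepsilon|x|$ for all $x\in\mathbb{R}$. Substituting this into the convolution term in \eqref{eq.x} and using $k\in\ell^1$, I obtain, for every $n\ge 0$,
\[
\bigl|x(n+1)-H(n+1)\bigr| \;\le\; K_\varepsilon|k|_1+\varepsilon|k|_1\, x^\ast(n).
\]
Taking absolute values and then the running maximum on both sides yields two inequalities relating $x^\ast(n+1)$ and $H^\ast(n+1)$, namely $x^\ast(n+1)\le H^\ast(n+1)+K_\varepsilon|k|_1+\varepsilon|k|_1 x^\ast(n+1)$ and $H^\ast(n+1)\le(1+\varepsilon|k|_1)x^\ast(n+1)+K_\varepsilon|k|_1$. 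Choosing $\varepsilon<1/|k|_1$, the first estimate first gives $x^\ast(n)\to\infty$ from $H^\ast(n)\to\infty$, and then dividing by $H^\ast(n+1)$ and letting $n\to\infty$ (using $H^\ast(n)\to\infty$) produces $\limsup_n x^\ast(n)/H^\ast(n)\le 1/(1-\varepsilon|k|_1)$; the second estimate similarly gives $\liminf_n x^\ast(n)/H^\ast(n)\ge 1/(1+\varepsilon|k|_1)$. Sending $\varepsilon\downarrow 0$ proves part (i).

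For part (ii), I apply the pointwise bound $|x(m)-H(m)|\le K_\varepsilon|k|_1+\varepsilon|k|_1 x^\ast(m-1)$ (valid for $m\ge 1$) at the specific random times $m=t^H_n$ and $m=t^x_n$. For the first limit in (a), dividing $|x(t^H_n)-H(t^H_n)|\le K_\varepsilon|k|_1+\varepsilon|k|_1 x^\ast(n)$ by $|H(t^H_n)|=H^\ast(n)$ and invoking part (i) together with $H^\ast(n)\to\infty$ yields $|x(t^H_n)/H(t^H_n)-1|\le o(1)+\varepsilon|k|_1\cdot(1+o(1))$; letting $\varepsilon\downarrow 0$ gives the ratio $\to 1$. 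The second limit in (a) uses the obvious bound $|x(t^H_n)|\le x^\ast(n)=|x(t^x_n)|$ combined with $|x(t^H_n)|\ge H^\ast(n)-K_\varepsilon|k|_1-\varepsilon|k|_1 x^\ast(n)$; dividing by $x^\ast(n)$ and using part (i) handles the liminf.

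For part (b), since $x(0)=\xi$ is finite and $x^\ast(n)\to\infty$, we have $t^x_n\ge 1$ for all sufficiently large $n$, so the same pointwise estimate with $m=t^x_n$ applies. Dividing $|x(t^x_n)-H(t^x_n)|\le K_\varepsilon|k|_1+\varepsilon|k|_1 x^\ast(n)$ by $|x(t^x_n)|=x^\ast(n)$ gives the first limit directly. For the second, use $|H(t^x_n)|\le |H(t^H_n)|=H^\ast(n)$ for the trivial $\limsup\le 1$ bound, and for the liminf use $|H(t^x_n)|\ge|x(t^x_n)|-K_\varepsilon|k|_1-\varepsilon|k|_1 x^\ast(n)=x^\ast(n)(1-\varepsilon|k|_1)-K_\varepsilon|k|_1$; dividing by $H^\ast(n)$ and applying part (i) completes the argument after sending $\varepsilon\downarrow 0$.

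The only real subtlety, and what I expect to be the main bookkeeping obstacle, is arranging the maxima so that the same $x^\ast$ appears on both sides of each inequality (so that the $\varepsilon|k|_1\, x^\ast$ term can be absorbed) and verifying that all the random times are eventually in the regime where the sublinear bound actually bites; everything else is an $\varepsilon$-argument built on a single linear-growth estimate for $f$.
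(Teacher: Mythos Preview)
Your proposal is correct and follows essentially the same route as the paper: both arguments rest on the single sublinear bound $|f(x)|\le K_\varepsilon+\varepsilon|x|$, derive the pointwise estimate $|x(m)-H(m)|\le K_\varepsilon|k|_1+\varepsilon|k|_1\,x^\ast(m-1)$, take running maxima to get the two--sided comparison of $x^\ast$ and $H^\ast$, and then evaluate at the times $t^H_n$, $t^x_n$ with an $\varepsilon\downarrow 0$ finish. One trivial slip: it is your \emph{second} estimate $H^\ast(n+1)\le(1+\varepsilon|k|_1)x^\ast(n+1)+K_\varepsilon|k|_1$ (equivalently, part (b) of Theorem~\ref{prop.bounded}) that forces $x^\ast(n)\to\infty$, not the first, which is only an upper bound on $x^\ast$.
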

In advance of proving Theorem~\ref{theorem.growthmaxima}, we now provide an interpretation of its conclusions. 

If we suppose that $H$ fluctuates such that
\[
 \max_{{1}\leq{j}\leq{n}}H(j)\to\infty \quad\text{and}\quad \min_{{1}\leq{j}\leq{n}}H(j)\to-\infty, \text{ as $n\to\infty$},
\] 
we see that part (i) implies that the order of magnitude of the large fluctuations in $x$ is precisely that of the large fluctuations in $H$.

The first limit in part (ii)(a) states that at the time to date (for large times) at which $H$ reaches its maximum, $x$ is of the same order.
Moreover, the second limit says that if one considers the epoch $\left\{0,..,n\right\}$, the largest fluctuation of $x$ is of the same order as the magnitude of $x$ at the time of the largest fluctuation of $H$. In other words, a fluctuation of the order of the biggest fluctuation in $x$ is ``caused'' at the time of the largest fluctuation in $H$, so, the largest fluctuations in $H$ transmit rapidly into the largest fluctuations in $x$.

Turning to the first limit in part (b), we see that on the epoch $\left\{0,..,n\right\}$, if the largest fluctuation in $x$ is recorded, the level of $H$ at that time is of the order of the largest fluctuation in $x$. Furthermore, the level of $H$ at that time is asymptotic to the largest fluctuation in $H$ over the epoch $\left\{0,..n\right\}$. This means that if the largest fluctuation to date in the process $x$ is observed at a specific time, then this is caused by a large fluctuation in $H$ at that time and this fluctuation in $H$ is of the order of the largest fluctuation in $H$ recorded to date.
To summarise briefly, if we observe the largest fluctuation to date in $x$, it has essentially been caused by the largest fluctuation in $H$ to date, which occurred at that time.

\subsection{Growth Rates}
Theorem~\ref{theorem.growthmaxima} shows that if $H$ is unbounded, then so is $x$, and their absolute maxima grow at the same rate. However, 
what we do not know at this point is whether growth in $H$ will produce growth in $x$, and whether fluctuations in $H$ will produce fluctuations in $x$.
In this section, we show that ``regular'' growth in $H$ (in a sense that we make precise) gives rise to regular growth in $x$, and indeed that such regular growth in $x$ is possible only if $H$ grows regularly. 

\begin{theorem} \label{theorem.growth}
Suppose $f$ obeys \eqref{eq.f1} and \eqref{eq.f2}, $k$ obeys \eqref{eq.kl1} and that $x$ is the solution of \eqref{eq.x}.
\begin{itemize} 
\item[(i)] The following statements are equivalent:
\begin{enumerate}
	\item[(a)] $H(n)$ is asymptotic to an increasing sequence and $H(n) \to \infty$ as $n\to \infty.$ 
	\item[(b)] $x(n)$ is asymptotic to an increasing sequence and $x(n)\to\infty$ as $\to\infty.$
\end{enumerate}
Moreover, both statements imply that $\lim_{n\to\infty} x(n)/H(n)=1$. 
\item[(ii)]
The following statements are equivalent:
\begin{enumerate}
	\item[(a)] $H(n)$ is asymptotic to a decreasing sequence and $H(n) \to -\infty\; as\; n\to \infty.$ 
	\item[(b)] $x(n)$ is asymptotic to a decreasing sequence and $x(n)\to-\infty\;as\;n\to\infty.$
\end{enumerate}
Moreover, both statements imply that $\lim_{n\to\infty} x(n)/H(n)=1$. 
\end{itemize}
\end{theorem}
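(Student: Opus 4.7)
The overall strategy is to leverage Theorem~\ref{theorem.growthmaxima}, which already gives $x^*(n)\sim H^*(n)$, and then to promote that asymptotic equivalence from partial maxima to the sequences themselves whenever one of them is asymptotic to a monotone sequence. First I would reduce (ii) to (i) by applying (i) to $\tilde H(n):=-H(n)$, $\tilde x(n):=-x(n)$ with $\tilde f(u):=-f(-u)$; this change of variables preserves \eqref{eq.x} and the hypotheses \eqref{eq.f1}--\eqref{eq.f2}. Next I would record a small lemma: if $y(n)\to\infty$ and $y(n)\sim \tilde y(n)$ for some nondecreasing $\tilde y$, then (A) $\max_{j\le n}|y(j)|\sim y(n)$, and (B) $\liminf_{n\to\infty} y(n+1)/y(n)\ge 1$. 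Both follow from sandwiching $y$ between $(1\pm\varepsilon)\tilde y$ and using monotonicity of $\tilde y$ together with $\tilde y(n)\to\infty$.

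For (a)$\Rightarrow$(b), the hypothesis forces $H^*(n)\to\infty$, so Theorem~\ref{theorem.growthmaxima}(i) gives $x^*(n)\sim H^*(n)$, and the lemma applied to $H$ yields $H^*(n)\sim H(n)$. To handle the Volterra term I would use \eqref{eq.f1}--\eqref{eq.f2} to extract, for each $\varepsilon>0$, a constant $F_\varepsilon$ with $|f(y)|\le F_\varepsilon+\varepsilon|y|$ for all $y\in\mathbb{R}$, whence
\[
\Bigl|\sum_{j=0}^{n}k(n-j)f(x(j))\Bigr|\le F_\varepsilon|k|_1+\varepsilon|k|_1\,x^*(n).
\]
Dividing by $x^*(n)\to\infty$ and letting $\varepsilon\downarrow 0$ shows the sum is $o(x^*(n))=o(H(n))$. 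Combined with part (B) of the lemma applied to $H$ (so $H(n+1)$ is bounded below by a positive multiple of $H(n)$), \eqref{eq.x} rewrites as $x(n+1)=H(n+1)+o(H(n+1))$, giving $x(n)/H(n)\to 1$; hence $x(n)\to\infty$ and $x(n)\sim\tilde H(n)$, so $x$ is asymptotic to an increasing sequence.

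For the converse (b)$\Rightarrow$(a), $x(n)\to\infty$ forces $x^*(n)\to\infty$, so Theorem~\ref{prop.bounded}(a) forces $H^*(n)\to\infty$, and Theorem~\ref{theorem.growthmaxima}(i) again gives $x^*(n)\sim H^*(n)$. The lemma applied to $x$ yields $x^*(n)\sim x(n)$ and $\liminf x(n+1)/x(n)\ge 1$, and the same $\varepsilon$--estimate shows the Volterra sum is $o(x^*(n))=o(x(n))$. Rearranging \eqref{eq.x} as
\[
H(n+1)=x(n+1)-\sum_{j=0}^{n}k(n-j)f(x(j))=x(n+1)+o(x(n+1))
\]
yields $H(n)/x(n)\to 1$; thus $H(n)\to\infty$ and $H(n)\sim\tilde x(n)$, finishing (a) and reconfirming the ratio $x(n)/H(n)\to 1$.

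The only genuine obstacle is bookkeeping: one must correctly identify which of Theorem~\ref{prop.bounded} or Theorem~\ref{theorem.growthmaxima} to invoke in each direction of the equivalence, and then chain together the three asymptotic equivalences $H\sim H^*$, $H^*\sim x^*$, and $x^*\sim x$, only two of which are ever directly available from the hypothesis in force. The sublinearity estimate and the monotone--asymptotic lemma are both elementary, so the substance of the proof lies entirely in organising these ingredients.
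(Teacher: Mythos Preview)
Your proposal is correct and follows essentially the same route as the paper's proof: the paper's Lemma~\ref{preplemma} is exactly your part~(A), and your part~(B) appears inline in the paper as the estimate $\limsup_{n\to\infty} H(n)/H(n+1)\le 1$, used for the same purpose of transferring $o(H(n))$ to $o(H(n+1))$. The only cosmetic differences are that you package both auxiliary facts into a single lemma and explicitly reduce (ii) to (i) via the substitution $\tilde f(u)=-f(-u)$, whereas the paper simply states that only part~(i) will be proved.
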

Theorem~\ref{theorem.growth} deals with monotone growth in $H$ (and in $x$).  
If the growth is not monotone in $H$, this is also reflected in $x$. To capture non--monotone growth in $H$, with a potentially 
fluctuating component, let $(a(n))_{n\geq 1}$ be an increasing positive sequence and introduce the space of sequences $B_a$ 
\begin{equation*}
B_a=\{(H(n))_{n\geq 0} : \limsup_{n\to\infty} |H(n)|/a(n)<+\infty\}.
\end{equation*} 
It is clear that for every $y$ in $B_a$ there is a bounded sequence $\Lambda_a y$ (which is unique up to asymptotic equality) such that 
\begin{equation} \label{def.LambdaaH}
\lim_{n\to\infty} \left| \frac{y(n)}{a(n)}  - (\Lambda_a y)(n)\right|=0.
\end{equation}
We are interested in the case when $\Lambda_a H(n)$ does \emph{not} tend to a limit as $n\to\infty$: if the limit is trivial, then $a$ does not describe the rate of growth of $H$ very well and the situation is of less interest; if the limit is non--trivial, we are in the 
situation covered by Theorem~\ref{theorem.growth}.
\begin{theorem} \label{theorem.growth2}
Suppose $f$ obeys \eqref{eq.f1} and \eqref{eq.f2}, $k$ obeys \eqref{eq.kl1} and that $x$ is the solution of \eqref{eq.x}.
Let $(a(n))_{n\geq 1}$ be an increasing and positive sequence such that $a(n)\to\infty$ as $n\to\infty$. 
Then the following are equivalent
\begin{itemize}
\item[(a)] $H\in B_a$, and $\Lambda_a H$ defined by \eqref{def.LambdaaH} is asymptotically non--null; 
\item[(b)] $x\in B_a$,  and $\Lambda_a x$ defined by \eqref{def.LambdaaH} is asymptotically non--null; 
\end{itemize}
Moreover, both imply that we may take $\Lambda_a x=\Lambda_a H$.  
\end{theorem}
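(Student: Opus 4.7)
The plan is to route both implications through the single identity
\[
x(n+1) - H(n+1) = \sum_{j=0}^n k(n-j) f(x(j)),
\]
by showing that once either of $H \in B_a$ or $x \in B_a$ is known, the right-hand side is $o(a(n+1))$. Once this is in hand, dividing by $a(n+1)$ gives $|x(n)/a(n) - H(n)/a(n)| \to 0$; this immediately lets us take $\Lambda_a x = \Lambda_a H$ in \eqref{def.LambdaaH}, and so one limit-map is asymptotically non--null if and only if the other is.

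For the central Volterra-sum estimate, fix $\varepsilon > 0$. By \eqref{eq.f2} choose $M > 0$ so that $|f(u)| \leq \varepsilon |u|$ for $|u| \geq M$, and set $F_M := \sup_{|u| \leq M} |f(u)|$, which is finite by \eqref{eq.f1}. Whenever $x \in B_a$ one has $B := \sup_{j \geq 0} |x(j)|/a(j) < \infty$ (since $a$ is positive and $|x(j)|/a(j)$ is eventually bounded), so $|f(x(j))| \leq \varepsilon B\, a(j) + F_M$. Because $a$ is increasing, $a(j) \leq a(n)$ for $0 \leq j \leq n$, and hence
\[
\left|\sum_{j=0}^n k(n-j) f(x(j))\right| \leq \varepsilon B |k|_1\, a(n) + F_M |k|_1.
\]
Dividing by $a(n+1) \geq a(n)$ and letting $n \to \infty$ gives $\limsup_{n \to \infty} |x(n+1)-H(n+1)|/a(n+1) \leq \varepsilon B |k|_1$, and since $\varepsilon > 0$ is arbitrary, the limsup is $0$.

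To close direction (a) $\Rightarrow$ (b): since $\Lambda_a H$ is non--null and $a(n) \to \infty$, $H$ is unbounded, so Theorem~\ref{prop.bounded}(b) and Theorem~\ref{theorem.growthmaxima}(i) yield $x^*(n) \sim H^*(n)$. Combined with $H \in B_a$ (which, as $a$ is increasing, forces $H^*(n)/a(n)$ to be bounded), this produces $x^*(n)/a(n)$ bounded, and hence $x \in B_a$; the preceding estimate then supplies the required asymptotic equality. For (b) $\Rightarrow$ (a): $x \in B_a$ alone is enough to run the Volterra-sum estimate, and the triangle inequality on the rewritten equation $H(n+1) = x(n+1) - \sum_{j=0}^n k(n-j) f(x(j))$ then gives $|H(n+1)|/a(n+1) \leq |x(n+1)|/a(n+1) + o(1)$, so $H \in B_a$; the same asymptotic-equality argument then transfers the non--null property from $\Lambda_a x$ to $\Lambda_a H$.

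The only step with real content is the bootstrap from $H \in B_a$ to $x \in B_a$ in the forward direction, which I expect would be the main obstacle had Theorem~\ref{theorem.growthmaxima} not been available; everything else is a routine splitting of $f$ about its sublinearity threshold, combined with the monotonicity of $a$ used to factor $a(n)$ out of the convolution.
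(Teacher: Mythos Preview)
Your proof is correct and follows essentially the same route as the paper's. Both arguments pivot on the identity $x(n+1)-H(n+1)=S(n)$, establish $S(n)/a(n)\to 0$ once $x\in B_a$ via the sublinearity of $f$ and summability of $k$, and for the forward direction bootstrap $H\in B_a$ to $x\in B_a$ by invoking Theorem~\ref{theorem.growthmaxima}(i) to get $x^\ast(n)\sim H^\ast(n)$ together with the elementary fact (which the paper records as Lemma~\ref{H(n)/a(n)}(a)) that $H\in B_a$ with $a$ increasing forces $H^\ast(n)/a(n)$ bounded. The only cosmetic difference is that you bound $|f(x(j))|$ by $\varepsilon B\,a(j)+F_M$ and then use $a(j)\leq a(n)$, whereas the paper uses the running maximum directly via $|f(x(j))|\leq F(\epsilon)+\epsilon\,x^\ast(n)$; these amount to the same thing.
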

The interpretation of the implication (a) implies (b) of Theorem~\ref{theorem.growth2} is clear: if the external force grows at a rate $a$, modulo a non--trivial and non--constant bounded multiplicative factor $\Lambda_a H$, then the solution grows at the same rate $a$, multiplied by the factor $\Lambda_a H$. Therefore, regular growth in $H$ (with fluctuations about a trend growth rate) are reflected in $x$, and the character of the fluctuations about the trend is the same for the output $x$. Conversely, if we observe growth modified by a multiplicative fluctuation in the output $x$, this must have been caused by the same pattern of growth in the forcing term $H$.

\begin{example} \label{examp.periodicgrowth}
Let $a>0$ and suppose that $H(n)=e^{an}\pi(n)$, where $\pi$ is $N$--periodic with $\max_{i=0,\ldots, N-1} \pi(i)=\overline{\pi}>0$
and  $\min_{i=0,\ldots, N-1} \pi(i)=\underline{\pi}\in (0,\overline{\pi})$. Thus $H$ exhibits exponential growth with a periodic component, and as such is a crude model for growth with periodic booms and recessions in the world economy. The small system, whose output is influenced by $H$, is modelled by $x$. In the above notation, we can take $a(n)=e^{an}$ and $\Lambda_a H=\pi$. Then by 
Theorem~\ref{theorem.growth2}
\[
\lim_{n\to\infty}\left|\frac{x(n)}{e^{an}}-\pi(n)\right|=0,
\]    
so we see that $x$ inherits the main properties of the growth path of $H$: in economic terms, the booms and recessions in the outside system propagate rapidly into the small system. 
\end{example}

\subsection{Signed fluctuations and their magnitudes}
We have just seen that Theorem~\ref{theorem.growthmaxima}, while useful, does not distinguish between growth or fluctuations in solutions of \eqref{eq.x}. Theorem \ref{theorem.growth} demonstrates that regular growth in $H$ gives rise to regular growth in $x$ at the same rate as $H$. 
The question at hand now is to refine, in a similar manner, Theorem~\ref{theorem.growthmaxima}, in order to capture the large fluctuations in solutions of \eqref{eq.x}. It is reasonable to suppose that such fluctuations in $x$ must result from large fluctuations in $H$, and in parallel 
with Theorem~\ref{theorem.growth}, it is also reasonable to try to connect the sizes of the large fluctuations in $x$ to those in $H$. 

We have used the term fluctuation loosely above, but now we want to try to capture it mathematically. We are assuming that $H^\ast(n)\to \infty$ 
as $n\to\infty$, but in order to describe a fluctuation in $H$, we do not want to have $H(n)\to\infty$ or $H(n)\to-\infty$ as $n\to\infty$, or more generally, we do not want the limit of $H$ to exist. Roughly speaking, we could have two types of fluctuation in $H$: the first type, which we emphasise here, is that $H$ fluctuates without bound to plus and minus infinity. The second is that $H$ has an infinite limsup but finite liminf (or negative infinite liminf and finite limsup). 

Considering the first situation a little more, we should distinguish between the sizes of large positive and large negative fluctuations in $H$. To this end, we introduce the monotone sequences  
\begin{gather} \label{def.Hastpm}
	H^*_+(n):=\max_{{1}\leq{j}\leq{n}}H(j), \qquad
	H^*_-(n):=-\min_{{1}\leq{j}\leq{n}}H(j)=\max_{{1}\leq{j}\leq{n}}(-H(j)). 
\end{gather}
We see that $H^\ast_+$ records the magnitude of the large positive fluctuations, while $H^\ast_-$ records the magnitude of the large negative  fluctuations. Clearly the overall maximum of these magnitudes is just $H^\ast$, or $H^*(n)=\max(H^*_+(n),H^*_-(n))$. 

We expect that fluctuations in $H$ will cause fluctuations in $x$, so we make make the corresponding definitions for $x$ as well.
These are
\begin{gather}  \label{def.xastpm}
x^*_+(n):=\max_{{0}\leq{j}\leq{n}}x(j), \qquad
x^*_-(n):=-\min_{{0}\leq{j}\leq{n}}x(j)=\max_{{0}\leq{j}\leq{n}}(-x(j)),
\end{gather}
and $x^*(n)=\max\left(x^*_+(n),x^*_-(n)\right)$.

We are now in a position to state and prove our main result, Theorem~\ref{theorem.signedfluc} below. It is useful to assume that there is $\lambda\in [0,\infty]$ 
such that 
\begin{equation} \label{eq.Hlambda} 
\lambda:=\lim_{n\to\infty}\frac{H^*_-(n)}{H^*_+(n)}. 
\end{equation}
The existence of this limit helps us to decide whether the large negative or large positive fluctuations dominate.

If the large positive fluctuations 
in $H$ dominate asymptotically the large negative fluctuations (in the sense that $\lambda\in [0,1)$ in \eqref{eq.Hlambda}) then $x$ experiences a large positive fluctuation of the same order as the large positive fluctuation in $H$, and this also captures growth rate of the partial maximum of $|x|$; in other words, if $x$ experiences a large negative fluctuation, it will be dominated by the large positive fluctuation. This is the subject of  part (i) in Theorem~\ref{theorem.signedfluc}.

Symmetrically, if the large negative fluctuations 
in $H$ dominate asymptotically the large positive fluctuations (in the sense that $\lambda\in (1,\infty]$ in \eqref{eq.Hlambda}), then $x$ experiences a large negative fluctuation of the same order as the large negative fluctuation in $H$, and this also captures growth rate of the partial maximum of $|x|$; in other words, if $x$ experiences a large positive fluctuation, it will be dominated by the large negative fluctuation. 
This is the subject of  part (ii) in Theorem~\ref{theorem.signedfluc}.

Finally, if the growth rates of the the large positive and large negative fluctuations in $H$ are the same (in the sense that $\lambda=1$ in \eqref{eq.Hlambda}), then $x$ experiences both large positive and large negative fluctuations, the growth rate of both are the same, and moreover equal to the growth rates of the fluctuations in $H$. This is the subject of  part (iii) in Theorem~\ref{theorem.signedfluc}.

\begin{theorem} \label{theorem.signedfluc}
Suppose $f$ obeys \eqref{eq.f1} and \eqref{eq.f2}, $k$ obeys \eqref{eq.kl1} and that $x$ is the solution of \eqref{eq.x}.
Suppose $H^*(n)\to\infty$ as $n\to\infty$, and that $H$ and $\lambda$ obey \eqref{eq.Hlambda}, and that $H^\ast_{\pm}$ and $x^\ast_{\pm}$ are defined by \eqref{def.Hastpm} and \eqref{def.xastpm}. 
\begin{itemize}
\item[(i)] If $\lambda\in [0,1)$ then
\begin{equation*} \lim_{n\to\infty} x^\ast_+(n)=+\infty  \end{equation*}
and
\begin{equation*} \lim_{n\to\infty} \frac{x^\ast_+(n)}{H^\ast_+(n)}
=\lim_{n\to\infty} \frac{x^\ast(n)}{H^\ast_+(n)}=1.\end{equation*}
\item[(ii)] If $\lambda\in (1,\infty]$ then
\begin{equation*} \lim_{n\to\infty} x^\ast_-(n)=+\infty
\end{equation*}
 and 
\begin{equation*} 
\lim_{n\to\infty} \frac{x^\ast_-(n)}{H^\ast_-(n)}
=\lim_{n\to\infty} \frac{x^\ast(n)}{H^\ast_-(n)}=1.
\end{equation*}
\item[(iii)]  If $\lambda= 1$ then 
\begin{equation*}
\lim_{n\to\infty} x^\ast_+(n)=+\infty, \quad \lim_{n\to\infty} x^\ast_-(n)=+\infty,
\end{equation*}
and
\begin{equation*} 
\lim_{n\to\infty} \frac{x^\ast_+(n)}{H^*_+(n)}=\lim_{n\to\infty} \frac{x^\ast_-(n)}{H^*_-(n)}=1. 
\end{equation*}
\end{itemize}
\end{theorem}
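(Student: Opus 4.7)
The plan is to derive from \eqref{eq.x} matching two--sided estimates comparing $x^*_{\pm}(N)$ to $H^*_{\pm}(N)$, and then to close the loop using $x^*(n)/H^*(n)\to 1$ from Theorem~\ref{theorem.growthmaxima}(i). First, the sublinearity \eqref{eq.f2} together with \eqref{eq.f1} yields, for every $\epsilon>0$, a constant $K(\epsilon)>0$ such that $|f(u)|\le K(\epsilon)+\epsilon|u|$ for all $u\in\mathbb{R}$. Inserting this into \eqref{eq.x} and using $k\in\ell^1$, I obtain for each $1\le n\le N$
\[
H(n)-|k|_1\bigl(K(\epsilon)+\epsilon x^*(N)\bigr)\le x(n)\le H(n)+|k|_1\bigl(K(\epsilon)+\epsilon x^*(N)\bigr).
\]
Maximising the right inequality over $1\le n\le N$ in $x(n)$ and (separately) in $-x(n)$, and noting that the initial value $x(0)$ cannot affect the maximum once $H^*_{\pm}(N)$ is sufficiently large, produces the companion pair
\[
\bigl|x^*_+(N)-H^*_+(N)\bigr|\le |k|_1 K(\epsilon)+\epsilon|k|_1 x^*(N),\quad \bigl|x^*_-(N)-H^*_-(N)\bigr|\le |k|_1 K(\epsilon)+\epsilon|k|_1 x^*(N).
\]

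Next I would dispose of the three cases. For case (i), $\lambda\in[0,1)$ forces $H^*(N)=H^*_+(N)$ for all large $N$, so in particular $H^*_+(N)\to\infty$, and Theorem~\ref{theorem.growthmaxima}(i) then gives $x^*(N)/H^*_+(N)\to 1$. Dividing the first companion estimate by $H^*_+(N)$, letting $N\to\infty$ and then $\epsilon\downarrow 0$, yields $x^*_+(N)/H^*_+(N)\to 1$ and, from the lower inequality, $x^*_+(N)\to\infty$. Case (ii) reduces to case (i) by replacing $H$ by $-H$ and $x$ by $-x$: \eqref{eq.x} is preserved on replacing $f$ by $\tilde f(u):=-f(-u)$, which still obeys \eqref{eq.f1}--\eqref{eq.f2}, and the substitution swaps $H^*_{\pm}$ with $H^*_{\mp}$ and $x^*_{\pm}$ with $x^*_{\mp}$, turning $\lambda$ into $1/\lambda\in[0,1)$.

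For case (iii), $\lambda=1$ gives $H^*_+(N)/H^*(N)\to 1$ and $H^*_-(N)/H^*(N)\to 1$, so both $H^*_{\pm}(N)\to\infty$ and $x^*(N)/H^*_{\pm}(N)\to 1$. Dividing each of the two companion estimates by the matching $H^*_{\pm}(N)$ and sending first $N\to\infty$ then $\epsilon\downarrow 0$ delivers all four conclusions of (iii). The one delicate point throughout is the feedback term $\epsilon|k|_1 x^*(N)$ on the right--hand sides of the companion estimates: this is exactly what Theorem~\ref{theorem.growthmaxima}(i) is designed to neutralise, for it ensures $x^*(N)$ grows no faster than $H^*(N)$ and hence at the same rate as the relevant $H^*_{\pm}(N)$ in each case, after which the error is absorbed by taking $\epsilon$ small. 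This is the pivot of the argument; once it is secured, the remainder of the proof is purely bookkeeping.
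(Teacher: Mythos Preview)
Your proposal is correct and follows essentially the same route as the paper. Both arguments bound the convolution term by $|k|_1 K(\epsilon)+\epsilon|k|_1 x^*(N)$, derive the lower estimates $x^*_\pm(N)\ge H^*_\pm(N)-(\text{error})$, invoke Theorem~\ref{theorem.growthmaxima}(i) to control $x^*(N)/H^*(N)$ and hence the error term, and finish by sending $\epsilon\downarrow 0$; the only cosmetic differences are that the paper obtains the upper bound in case~(i) via the cruder inequality $x^*_+\le x^*$ rather than your direct companion estimate, and merely says case~(ii) is ``symmetric'' where you spell out the substitution $(H,x,f)\mapsto(-H,-x,-f(-\,\cdot\,))$.
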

We note an asymmetry here in parts (i) and (ii) between assumptions on $H$ and conclusions concerning $x$. If $\lambda\in (0,\infty)$, we have both $H^\ast_+(n)$ and $H^\ast_-(n)\to\infty$ as $n\to\infty$. However, part (i) only yields $x^\ast_+(n)\to\infty$ in the case when $\lambda\in (0,1)$, while part (ii) yields only $x^\ast_-(n)\to\infty$ in the case when $\lambda\in (1,\infty)$. In other words, despite the fact that $H$ experiences negative fluctuations in part (i), we do not say anything about corresponding large negative fluctuations in $x$, and in part (ii), large positive fluctuations in $H$ do not give us any conclusions about the presence of large positive fluctuations in $x$. Further analysis shows that this limitation can be overcome: the results are summarised in the next theorem. 

Roughly speaking, if the large positive and large negative fluctuations are of the same order of magnitude, $x$ experiences both large positive and large negative fluctuations, the large positive fluctuations of $x$ grow at exactly the rate of the 
positive fluctuations of $H$, and the negative fluctuations grow at exactly the same rate as those of $H$. In the case that 
the positive fluctuations of $H$ dominate the negative fluctuations, 
the positive fluctuations of $x$ dominate its negative fluctuations.  
Finally, if the negative fluctuations of $H$ dominate its positive fluctuations, the negative fluctuations of $x$ dominate 
its positive fluctuations.  
\begin{theorem} \label{theorem.signedfluc2}
Suppose $f$ obeys \eqref{eq.f1} and \eqref{eq.f2}, $k$ obeys \eqref{eq.kl1} and that $x$ is the solution of \eqref{eq.x}.
Suppose $H^*(n)\to\infty$ as $n\to\infty$, and that $H$ and $\lambda$ obey \eqref{eq.Hlambda}, and that $H^\ast_{\pm}$ and $x^\ast_{\pm}$ are defined by \eqref{def.Hastpm} and \eqref{def.xastpm}.
\begin{itemize}
\item[(i)] If $\lambda\in (0,\infty)$,  then
\begin{equation*} \lim_{n\to\infty} x^\ast_+(n)=+\infty, 
\quad \lim_{n\to\infty} x^\ast_-(n)=+\infty  
\end{equation*}
and
\begin{equation*} 
\lim_{n\to\infty} \frac{x^\ast_+(n)}{H^\ast_+(n)}=1, 
\quad 
\lim_{n\to\infty} \frac{x^\ast_-(n)}{H^\ast_-(n)}=1.
\end{equation*}
\item[(ii)] If $\lambda=0$,  then
\begin{equation*} 
\lim_{n\to\infty} x^\ast_+(n)=+\infty,
\end{equation*}
 and
\begin{equation*} 
\lim_{n\to\infty} \frac{x^\ast_+(n)}{H^\ast_+(n)}=1, 
\quad 
\lim_{n\to\infty} \frac{x^\ast_-(n)}{H^\ast_+(n)}=0.
\end{equation*}
\item[(iii)]  If $\lambda= \infty$, then 
\begin{equation*}
\lim_{n\to\infty} x^\ast_-(n)=+\infty,
\end{equation*}
and
\begin{equation*} 
\lim_{n\to\infty} \frac{x^\ast_+(n)}{H^\ast_-(n)}=0, 
\quad 
\lim_{n\to\infty} \frac{x^\ast_-(n)}{H^\ast_-(n)}=1.
\end{equation*}
\end{itemize}
\end{theorem}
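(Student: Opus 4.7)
My approach refines the arguments underlying Theorems~\ref{theorem.growthmaxima} and \ref{theorem.signedfluc} by tracking the signed running maxima $x^\ast_\pm$ separately and dividing by whichever of $H^\ast_+$ or $H^\ast_-$ is non-degenerate for the given value of $\lambda$. The two universal tools are: (a) by \eqref{eq.f1}--\eqref{eq.f2}, for every $\varepsilon>0$ there exists $F_\varepsilon>0$ with $|f(y)|\leq\varepsilon|y|+F_\varepsilon$ for all $y\in\mathbb{R}$, which combined with \eqref{eq.kl1}--\eqref{eq.k1} yields
\[
\Big|\sum_{j=0}^{n} k(n-j)f(x(j))\Big|\leq\varepsilon|k|_1\,x^\ast(n)+|k|_1 F_\varepsilon,\qquad n\geq 0;
\]
and (b) Theorem~\ref{theorem.growthmaxima}, which gives $x^\ast(n)/H^\ast(n)\to 1$.

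From these ingredients and \eqref{eq.x} I would derive matched one-sided bounds. Taking $x(j)\leq H(j)+\varepsilon|k|_1 x^\ast(n)+|k|_1 F_\varepsilon$ over $1\leq j\leq n$ and treating $\xi=x(0)$ as an irrelevant bounded term produces the upper bound
\[
x^\ast_+(n)\leq\max\bigl(\xi,\,H^\ast_+(n)+\varepsilon|k|_1\,x^\ast(n)+|k|_1 F_\varepsilon\bigr).
\]
For the matching lower bound, let $s_n\in\{1,\dots,n\}$ be any index with $H(s_n)=H^\ast_+(n)$ and evaluate \eqref{eq.x} there to obtain $x(s_n)\geq H^\ast_+(n)-\varepsilon|k|_1 x^\ast(n)-|k|_1 F_\varepsilon$, whence
\[
x^\ast_+(n)\geq H^\ast_+(n)-\varepsilon|k|_1\,x^\ast(n)-|k|_1 F_\varepsilon.
\]
Completely symmetric inequalities, obtained by applying the same reasoning to $-x$ and $-H$, hold for $x^\ast_-$ with $H^\ast_-$ in place of $H^\ast_+$.

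I would then split into the three cases by choosing the denominator. In case (i), $\lambda\in(0,\infty)$ forces both $H^\ast_+(n)\to\infty$ and $H^\ast_-(n)\to\infty$, while $H^\ast(n)/H^\ast_+(n)\to\max(1,\lambda)<\infty$. Dividing the bounds above by $H^\ast_+(n)$, invoking (b) to control $x^\ast(n)/H^\ast_+(n)$, and sending $n\to\infty$ and then $\varepsilon\downarrow 0$, yields $x^\ast_+(n)/H^\ast_+(n)\to 1$; the symmetric calculation gives $x^\ast_-(n)/H^\ast_-(n)\to 1$, and both imply $x^\ast_\pm(n)\to\infty$. In case (ii) with $\lambda=0$, we have $H^\ast_+(n)\to\infty$ and $H^\ast(n)/H^\ast_+(n)\to 1$, so the same sandwich gives $x^\ast_+(n)/H^\ast_+(n)\to 1$; the new point is the upper bound for $x^\ast_-$ divided by $H^\ast_+(n)$, namely
\[
\frac{x^\ast_-(n)}{H^\ast_+(n)}\leq\frac{H^\ast_-(n)}{H^\ast_+(n)}+\varepsilon|k|_1\,\frac{x^\ast(n)}{H^\ast_+(n)}+o(1),
\]
whose limsup is at most $\varepsilon|k|_1$, so sending $\varepsilon\downarrow 0$ gives $x^\ast_-(n)/H^\ast_+(n)\to 0$. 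Case (iii) with $\lambda=\infty$ is obtained by swapping the roles of $+$ and $-$ throughout; formally, one applies the argument to the pair $(-H,-x)$ using the admissible sublinear nonlinearity $\tilde{f}(y):=-f(-y)$.

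The main obstacle is the bookkeeping in the degenerate regimes: one must never divide by a quantity that may fail to diverge (for instance $H^\ast_-$ in case (ii) or $H^\ast_+$ in case (iii)), and must still demonstrate that the ``non-dominant'' signed maximum—though possibly bounded—is negligible on the scale of the dominant one. The auxiliary limit $x^\ast(n)/H^\ast_+(n)\to\max(1,\lambda)$ furnished by (b) is essential for absorbing the $\varepsilon|k|_1 x^\ast(n)$ error in the one-sided estimates; without it the estimates would contain an uncontrolled term and the final passage $\varepsilon\downarrow 0$ would not close. Apart from this, each case reduces to the two elementary estimates derived above.
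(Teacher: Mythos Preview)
Your proposal is correct and follows essentially the same approach as the paper: both arguments rest on the two-sided estimates $x^\ast_{\pm}(n)\gtrless H^\ast_{\pm}(n)\mp S^\ast(n)$ (with $S^\ast(n)\leq \varepsilon|k|_1 x^\ast(n)+|k|_1 F_\varepsilon$), divide by the dominant $H^\ast_{\pm}$, and send $\varepsilon\downarrow 0$ after using $x^\ast(n)/H^\ast(n)\to 1$ from Theorem~\ref{theorem.growthmaxima}. The only organizational difference is that the paper treats part~(i) by splitting into $\lambda\in(0,1)$, $\lambda=1$, $\lambda\in(1,\infty)$ and invoking Theorem~\ref{theorem.signedfluc} for the ``dominant'' side, whereas you handle all of $(0,\infty)$ uniformly with your own sandwich; your route is slightly more self-contained. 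One small point to make explicit in case~(ii): having shown $\limsup x^\ast_-(n)/H^\ast_+(n)\leq 0$, you still need $\liminf\geq 0$, which follows immediately from $x^\ast_-(n)\geq -\xi$ and $H^\ast_+(n)\to\infty$---the paper records this step, and your remark about $\xi$ being an ``irrelevant bounded term'' shows you are aware of it.
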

We note that part (ii) does not allow us to conclude that $\liminf_{n\to\infty} x(n)=-\infty$ under the condition that 
$\liminf_{n\to\infty} H(n)=-\infty$. We now show that by strengthening hypothesis on $f$ and $H$ that it is possible to conclude more. 
However, we also show that $\liminf_{n\to\infty} H(n)=-\infty$ does not necessarily imply that $\liminf_{n\to\infty} x(n)=-\infty$. 

We now make our additional assumptions on $f$: we assume that 
\begin{equation} \label{eq.fphi}
\text{There is $\phi\in C([0,\infty),[0,\infty))$ which is increasing and obeys } 
\lim_{|x|\to\infty} \frac{|f(x)|}{\phi(|x|)}=1.
\end{equation}
We also ask that 
\begin{equation}  \label{eq.phixdec}
\text{$x\mapsto \phi(x)/x$ is decreasing on $[x_0,\infty)$.}
\end{equation}
We invoke symmetry in $f$ to see what the impact is when there is asymmetry in the growth of $H^\ast_{\pm}(n)$ as $n\to\infty$.  
We deal only with the case in which the positive fluctuations dominate the negative ones: an analogous result can be obtained by the same means if the negative fluctuations are dominant (i.e., when $\lambda=+\infty$).
\begin{theorem} \label{theorem.signedfluct}
Suppose $f$ obeys \eqref{eq.f1} and \eqref{eq.f2}, $k$ obeys \eqref{eq.kl1} and that $x$ is the solution of \eqref{eq.x}.
Suppose further that there is $\phi$ such that $f$ and $\phi$ obey \eqref{eq.fphi} and \eqref{eq.phixdec}. 
Suppose $H^*(n)\to\infty$ as $n\to\infty$, and that $H$ and $\lambda$ obey \eqref{eq.Hlambda}, and that $H^\ast_{\pm}$ and $x^\ast_{\pm}$ are defined by \eqref{def.Hastpm} and \eqref{def.xastpm}.
\begin{itemize}
\item[(i)] If $\lambda\in (0,\infty)$,  then
\begin{equation} \label{eq.xpxminfty}
\lim_{n\to\infty} x^\ast_+(n)=+\infty, 
\quad \lim_{n\to\infty} x^\ast_-(n)=+\infty  
\end{equation}
and
\begin{equation} \label{eq.xpxminftyHpHm}
\lim_{n\to\infty} \frac{x^\ast_+(n)}{H^\ast_+(n)}=1, 
\quad 
\lim_{n\to\infty} \frac{x^\ast_-(n)}{H^\ast_-(n)}=1.
\end{equation}
\item[(ii)] Let $\lambda=0$, and suppose that
\begin{equation} \label{eq.lambda2}
\lim_{n\to\infty} \frac{H_-^\ast(n)}{f(H_+^\ast(n))}=:\lambda_2\in [0,\infty]. 
\end{equation}
If $\lambda_2=+\infty$, then \eqref{eq.xpxminfty} and \eqref{eq.xpxminftyHpHm} hold.
\item[(iii)] If $\lambda=0$ and $\lambda_2\in (0,\infty)$, then $\lim_{n\to\infty} x^\ast_+(n)=+\infty$ and
\begin{equation*} 
\lim_{n\to\infty} \frac{x^\ast_+(n)}{H^\ast_+(n)}=1, 
\quad 
\limsup_{n\to\infty} \frac{x^\ast_-(n)}{H^\ast_-(n)}\leq 1+\frac{1}{\lambda_2}\sum_{j=0}^\infty |k(j)|.
\end{equation*}
If, in addition, $\lambda_2>\sum_{j=0}^\infty |k(j)|$, then \eqref{eq.xpxminfty} holds and 
\[
 1-\frac{1}{\lambda_2}\sum_{j=0}^\infty |k(j)|\leq 
\liminf_{n\to\infty} \frac{x^\ast_-(n)}{H^\ast_-(n)}\leq 
\limsup_{n\to\infty} \frac{x^\ast_-(n)}{H^\ast_-(n)}\leq 1+\frac{1}{\lambda_2}\sum_{j=0}^\infty |k(j)|.
\] 
\item[(iv)] If $\lambda=0$ and $\lambda_2=0$, then $\lim_{n\to\infty} x^\ast_+(n)=+\infty$ and
\[
\lim_{n\to\infty} \frac{x^\ast_+(n)}{H^\ast_+(n)}=1, 
\quad 
\limsup_{n\to\infty} \frac{x^\ast_-(n)}{f(H^\ast_+(n))}\leq \sum_{j=0}^\infty |k(j)|.
\]
\end{itemize}
\end{theorem}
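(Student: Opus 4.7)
Part (i) is immediate from Theorem~\ref{theorem.signedfluc2}(i), whose hypotheses are weaker; \eqref{eq.fphi} and \eqref{eq.phixdec} play no role here. In (ii)--(iv) we have $\lambda=0$, and Theorem~\ref{theorem.signedfluc}(i) already delivers $x^*_+(n)/H^*_+(n)\to 1$ together with $x^*(n)\sim H^*_+(n)$; my plan is therefore to focus entirely on $x^*_-(n)$ and to extract all three subcases from a single two-sided estimate.

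The central claim is that, for every $\epsilon>0$, there is $C_\epsilon$ so that, for all sufficiently large $n$,
\[
\bigl|\, x^*_-(n) - H^*_-(n) \,\bigr| \;\le\; (1+\epsilon)\,|k|_1\,f(H^*_+(n)) \;+\; C_\epsilon.
\]
For the upper half I would expand, for each $1\le m\le n$,
\[
-x(m) \;=\; -H(m) - \sum_{j=0}^{m-1} k(m-1-j)\,f(x(j)) \;\le\; H^*_-(n) + \sum_{j=0}^{m-1}|k(m-1-j)|\,|f(x(j))|,
\]
pick $X_\epsilon$ so that $|f(y)|\le(1+\epsilon)\phi(|y|)$ for $|y|\ge X_\epsilon$, set $F_\epsilon=\max_{|y|\le X_\epsilon}|f(y)|$, and use the pointwise bound $|f(x(j))|\le (1+\epsilon)\phi(|x(j)|)+F_\epsilon$ together with monotonicity of $\phi$ and $|x(j)|\le x^*(n)$ to dominate the inner sum by $(1+\epsilon)|k|_1\phi(x^*(n))+F_\epsilon|k|_1$. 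Taking maximum over $m\in\{0,\ldots,n\}$ (the $m=0$ contribution is only $|\xi|$) yields the upper half with $\phi(x^*(n))$ in place of $f(H^*_+(n))$. The lower half arises from applying the same expansion at the time $m_n\in\{1,\ldots,n\}$ where $H(m_n)=-H^*_-(n)$, combined with $x^*_-(n)\ge -x(m_n)$.

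It remains to replace $\phi(x^*(n))$ by $f(H^*_+(n))$, and this is the main technical obstacle as well as the only place where the new hypotheses enter. Writing $x^*(n)=c_n H^*_+(n)$ with $c_n\to 1$, from \eqref{eq.phixdec} I would extract $\phi(cy)\le c\phi(y)$ for $c\ge 1$ and $\phi(cy)\ge c\phi(y)$ for $c\le 1$ (both valid for $y\ge x_0$); these inequalities, together with the monotonicity of $\phi$, squeeze $\phi(x^*(n))/\phi(H^*_+(n))$ to $1$, and then $\phi(H^*_+(n))\sim f(H^*_+(n))$ by \eqref{eq.fphi} since $H^*_+(n)\to\infty$.

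With the two-sided estimate in hand, the three subcases reduce to arithmetic. In (ii), dividing by $H^*_-(n)$ and using $f(H^*_+(n))/H^*_-(n)\to 1/\lambda_2=0$ gives $x^*_-(n)/H^*_-(n)\to 1$; since $\lambda_2=\infty$ forces $H^*_-(n)\to\infty$, we also have $x^*_-(n)\to\infty$. In (iii), dividing by $H^*_-(n)$ sandwiches $\limsup$ and $\liminf$ between $1\pm|k|_1/\lambda_2$, and when $\lambda_2>|k|_1$ the liminf bound is strictly positive, so $x^*_-(n)\to\infty$. In (iv), I would divide instead by $f(H^*_+(n))$, using $H^*_-(n)/f(H^*_+(n))\to 0$, to read off $\limsup x^*_-(n)/f(H^*_+(n))\le |k|_1$.
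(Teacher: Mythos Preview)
Your proposal is correct and follows essentially the same approach as the paper. The paper packages the convolution contribution via $S^\ast(n)$ and the key estimate $\limsup_{n\to\infty} S^\ast(n)/f(H^\ast_+(n))\le |k|_1$ (obtained exactly as you do, from $|f(x)|\le F_\epsilon+(1+\epsilon)\phi(|x|)$, monotonicity of $\phi$, $|x(j)|\le x^\ast(n)$, and then $\phi(x^\ast(n))\sim\phi(H^\ast_+(n))\sim f(H^\ast_+(n))$ via \eqref{eq.phixdec}), and then feeds this into the already-established inequalities $x^\ast_-(n)\ge H^\ast_-(n)-S^\ast(n)$ and $\max_{1\le j\le n}(-x(j))\le H^\ast_-(n)+S^\ast(n)$; your two-sided estimate is just the combination of these same ingredients written out in one line.
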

We see (roughly) that if the negative fluctuations are not too small relative to the positive fluctuations, then it is still the case 
that the size of the negative fluctuations of $H$ determine the size of the negative fluctuations of $x$. However, if the negative fluctuations become too small relative to the positive fluctuations, it need not be the case that $x^\ast_-(n)$ is of the same order of magnitude as $H^\ast_-(n)$, as the following example shows. Indeed, it can even happen that the negative fluctuations of $x$ are bounded, even though those of $H$ are unbounded. We also see that the estimates in part (ii) and (iii) are better than the crude estimate $x^\ast_-(n)=o(H^\ast_+(n))$ as $n\to\infty$ that is supplied by part (ii) of Theorem~\ref{theorem.signedfluc2}; notice also how part (ii) of Theorem~\ref{theorem.signedfluct} can be thought of as a limiting case of part (iii) when $\lambda_2\to \infty$.
\begin{example}
Let $\mu_+>\mu_-$ where $\mu_+>0$ and $\mu_-\geq 0$. Let $\alpha\in (0,1)$. Let $n\geq 0$. 
Define $y(n)=(n+1)^{\mu_+}$ for $n$ an even integer and $y(n)=-(n+1)^{\mu_-}$ for $n$ an odd integer. 
Let $(k(n))_{n\geq 0}$ be a non--negative summable sequence and suppose $f(x)=\sgn(x)|x|^\alpha$ for $x\in\mathbb{R}$.
Define for $n\geq 0$
\[
H(n+1):=y(n+1)-\sum_{j=0}^n k(n-j) f(y(j)).
\]
Then $x(n)=y(n)$ for $n\geq 0$ is a solution of \eqref{eq.x}. $f$ obeys all the hypothesis of Theorem~\ref{theorem.signedfluct}.

If $\mu_+\alpha<\mu_-$, one can show that $H(n)\sim -n^{\mu_-}$ as $n\to\infty$ through the odd integers, and that 
$H(n)\sim n^{\mu_+}$ as $n\to\infty$ through the even integers. Then we see that $H^\ast_-(n)\sim n^{\mu_-}$ as $n\to\infty$ 
and $H^\ast_+(n)\sim n^{\mu_+}$ as $n\to\infty$. Thus $H^\ast_-(n)/f(H^\ast_+(n))\to 0$ as $n\to\infty$, and hence 
we have from part (ii) of Theorem~\ref{theorem.signedfluct} that $x^\ast_+(n)\sim n^{\mu_+}$ and $x^\ast_-(n)\sim n^{\mu_-}$ 
as $n\to\infty$. Of course, we can conclude this independently of Theorem~\ref{theorem.signedfluct} by simply observing that 
$x(n)=(n+1)^{\mu_+}$ for $n$ even and $x(n)=-(n+1)^{\mu_-}$ for $n$ odd.  

If $\mu_-<\alpha\mu_+$, we can show that $H(n)\sim -\sum_{j=0}^\infty k(2j) \cdot n^{\mu_+\alpha}$ as $n\to\infty$ through the odd integers, and $H(n)\sim n^{\mu_+}$ as $n\to\infty$ through the even integers. Hence $H^\ast_-(n)\sim \sum_{j=0}^\infty k(2j) \cdot n^{\mu_+\alpha}$ as $n\to\infty$ and $H^\ast_+(n)\sim n^{\mu_+}$ as $n\to\infty$. We know, by construction, that $x^\ast_+(n)\sim n^{\mu_+}$ 
and $x^\ast_-(n)\sim (1+n)^{\mu_-}$ as $n\to\infty$. We also have 
\[
\lim_{n\to\infty} \frac{H^\ast_-(n)}{f(H^\ast_+(n))} =\sum_{j=0}^\infty k(2j). 
\] 
Thus $\lambda_2=\sum_{j=0}^\infty k(2j) <\sum_{j=0}^\infty k(j)$ (provided $k(n)\neq 0$ for some odd integer $n$). Therefore, we can apply part (iii) of Theorem~\ref{theorem.signedfluct} to see that $x^\ast_-(n)=\mathcal{O}(n^{\mu_+\alpha})$ as $n\to\infty$. 

Now some remarks are in order. First, we see that part (iii) of Theorem~\ref{theorem.signedfluct} is not necessarily sharp, because we know that $x^\ast_-(n)\sim (1+n)^{\mu_-}$ as $n\to\infty$, while the upper bound furnished by the theorem grows strictly faster. Second, the rather precise estimates on the asymptotic behaviour of $H^\ast_{\pm}(n)$ as $n\to\infty$ that we possess (i.e., $H^\ast_-(n)\sim \sum_{j=0}^\infty k(2j) \cdot n^{\mu_+\alpha}$ as $n\to\infty$ and $H^\ast_+(n)\sim n^{\mu_+}$) are not sufficient to be able to predict the exact rate $x^\ast_-(n)\sim (1+n)^{\mu_-}$ as $n\to\infty$: the parameter $\mu_-$ does not appear in $k$, $f$ nor in the leading order asymptotic behaviour in $H^\ast_{\pm}$. Third, since $\mu_-=0$ is an admissible parameter value, it is the case that $H^\ast_-(n)\to\infty$ as $n\to\infty$ does \emph{not} necessarily imply that $x^\ast_-(n)\to\infty$ as $n\to\infty$. Fourth, it is the case that  
\[
\frac{x^\ast_-(n)}{H^\ast_-(n)}\sim \frac{n^{\mu_-}}{\sum_{j=0}^\infty k(2j) \cdot n^{\mu_+\alpha}} \to 0 \text{ as $n\to\infty$}, 
\]
so in general, we see that it is not necessarily the case that $x^\ast_-(n)$ is of the same order of magnitude as $H^\ast_-(n)$ 
as $n\to\infty$. Fifth, even though the upper bound in part (iii) is  not sharp, the Theorem does seem to identify, in 
this example at least, that there is a change in the asymptotic behaviour of $x^\ast_-$ when $\mu_-$ moves from being greater than $\alpha \mu_+$ to being less than $\alpha\mu_+$.
\end{example}


\subsection{Bounds on the fluctuations in terms of an auxiliary sequence} 
In applications, especially when $H$ is a stochastic process, it may be possible to prove by independent methods that there are increasing deterministic sequences which give precise bounds on the fluctuations of $H$. In the important case where $H$ is a sequence of independent and identically distributed random variables, it is possible to prove, by means of the Borel--Cantelli lemmas, that 
there exist sequences $a_+$ and $a_-$, which have very similar (but non--identical) asymptotic behaviour such that
\[
\limsup_{n\to\infty} \frac{|H(n)|}{a_+(n)}=0, \quad \limsup_{n\to\infty} \frac{|H(n)|}{a_-(n)}=+\infty, \quad \text{a.s.}
\]
An example of a case where this holds is when each $H(n)$ has the power law density $g(x)\sim c_\alpha|x|^{-\alpha}$ and $\alpha>1$. 
In this case one can take for instance $a_+(n)=n^{1/(\alpha-1)+\epsilon}$ and $a_-(n)=n^{1/(\alpha-1)-\epsilon}$ for any $\epsilon>0$ sufficiently small. In some cases one can even show that a single sequence determines the asymptotic behaviour, so it is possible to show that 
\[
\limsup_{n\to\infty} \frac{|H(n)|}{a(n)}=1, \quad \text{a.s.}
\]
An example for which this is true is a zero mean Gaussian white noise sequence, in which $a(n)=\sigma \sqrt{2\log n}$, where $\sigma^2$ 
is the variance of the white noise process. We give details of the calculations in the next subsection. 

These examples show that the auxiliary sequence $a$ may exactly estimate the fluctuations of $H$, or systematically over-- or 
underestimate it. Therefore, it makes sense to formulate a result in which 
$\limsup_{n\to\infty} |H(n)|/a(n)$ 
can be zero, finite but non--zero, or infinite, and attempt therefrom to determine the asymptotic behaviour of $|x|$. The following result shows, once again, the close coupling of the asymptotic behaviour of $H$ and $x$.
\begin{theorem} \label{theorem.limsup}
Suppose $f$ obeys \eqref{eq.f1} and \eqref{eq.f2}, and $k$ obeys \eqref{eq.kl1}. Let $x$ be the solution $x$ of \eqref{eq.x}. 
Suppose that $(a(n))_{n\geq 1}$ is an increasing sequence such that with $a(n)\to\infty$ as $n\to\infty$. Then the following are equivalent:
\begin{itemize}
\item[(a)] There exists $\rho\in [0,\infty]$ such that 
$\limsup_{n\to\infty} |H(n)|/a(n)=\rho$; 
\item[(b)]  There exists $\rho\in [0,\infty]$ such that 
$\limsup_{n\to\infty} H^\ast(n)/a(n)=\rho$; 
\item[(c)] There exists $\rho\in [0,\infty]$ such that 
$\limsup_{n\to\infty} |x(n)|/a(n)=\rho$; 
\item[(d)] There exists $\rho\in [0,\infty]$ such that 
$\limsup_{n\to\infty} x^\ast(n)/a(n)=\rho$. 
\end{itemize}
\end{theorem}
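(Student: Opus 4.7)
The statement asserts that the four quantities $\limsup_{n\to\infty}|H(n)|/a(n)$, $\limsup_{n\to\infty} H^*(n)/a(n)$, $\limsup_{n\to\infty}|x(n)|/a(n)$ and $\limsup_{n\to\infty} x^*(n)/a(n)$ all coincide as elements of $[0,\infty]$. My plan is to split the proof into two soft comparisons linking a sequence to its running maximum (giving (a)$\Leftrightarrow$(b) and (c)$\Leftrightarrow$(d)), followed by one Volterra-based comparison linking $H^*$ to $x^*$ (giving (b)$\Leftrightarrow$(d)).

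For the soft comparisons, the bound $|H(n)|\leq H^*(n)$ immediately yields $\limsup |H|/a\leq\limsup H^*/a$. For the reverse, if $\limsup|H(n)|/a(n)=\rho<\infty$, fix $\epsilon>0$ and choose $N$ so that $|H(j)|\leq (\rho+\epsilon)a(j)$ for $j\geq N$. Because $a$ is non-decreasing,
\begin{equation*}
H^*(n)=\max\bigl(H^*(N),\max_{N<j\leq n}|H(j)|\bigr)\leq H^*(N)+(\rho+\epsilon)\,a(n),\qquad n\geq N.
\end{equation*}
Dividing by $a(n)\to\infty$ and sending $\epsilon\downarrow 0$ gives $\limsup H^*/a\leq\rho$; the case $\rho=\infty$ is trivial since $H^*\geq|H|$. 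The same argument applied to $x$ and $x^*$ delivers (c)$\Leftrightarrow$(d).

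For (b)$\Leftrightarrow$(d), I use the sublinearity hypothesis \eqref{eq.f2}, which guarantees that for every $\epsilon>0$ there exists $M_\epsilon>0$ with $|f(y)|\leq\epsilon|y|+M_\epsilon$ for all $y\in\mathbb{R}$. Substituting this into \eqref{eq.x} and bounding the convolution crudely using $|k|_1$ from \eqref{eq.k1} yields, for $m\geq 1$,
\begin{equation*}
|x(m)|\leq H^*(m)+\epsilon|k|_1\,x^*(m)+M_\epsilon|k|_1,
\end{equation*}
whence, absorbing $|x(0)|=|\xi|$ into a constant and taking a running maximum,
\begin{equation*}
x^*(n)\leq|\xi|+H^*(n)+\epsilon|k|_1\,x^*(n)+M_\epsilon|k|_1.
\end{equation*}
Choosing $\epsilon$ so small that $\epsilon|k|_1<1$, rearranging, dividing by $a(n)$ and sending $n\to\infty$ then $\epsilon\downarrow 0$ yields $\limsup x^*/a\leq\limsup H^*/a$. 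For the reverse, rewrite \eqref{eq.x} as $H(m)=x(m)-\sum_{j=0}^{m-1}k(m-1-j)f(x(j))$ and apply the same estimate to get $|H(m)|\leq(1+\epsilon|k|_1)x^*(m)+M_\epsilon|k|_1$; a running maximum, division by $a(n)$, and the same $\epsilon$-limit deliver $\limsup H^*/a\leq\limsup x^*/a$.

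The main obstacle, such as it is, is purely bookkeeping: one must promote the two linear-in-$\limsup$ bounds to a genuine equality by taking $\epsilon\downarrow 0$ in both directions, and verify that the $\rho=0$ and $\rho=\infty$ extremes are handled uniformly by the same estimates. No further structural hypotheses on $f$ or $k$ beyond \eqref{eq.f2} and \eqref{eq.kl1} are required, and the proof is self-contained relative to the earlier results.
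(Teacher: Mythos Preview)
Your proof is correct. The (a)$\Leftrightarrow$(b) and (c)$\Leftrightarrow$(d) arguments coincide in spirit with the paper's Lemma~\ref{H(n)/a(n)}, though you organise them more efficiently without a three-way case split on $\rho$. The (b)$\Leftrightarrow$(d) link, however, is handled by a genuinely different route. The paper invokes the earlier conclusion $x^*(n)/H^*(n)\to 1$ from Theorem~\ref{theorem.growthmaxima} (valid when $H^*(n)\to\infty$) together with a case analysis on $\rho\in\{0\}\cup(0,\infty)\cup\{\infty\}$; you instead derive the two pointwise inequalities
\[
(1-\epsilon|k|_1)\,x^*(n)\leq |\xi|+H^*(n)+M_\epsilon|k|_1,\qquad H^*(n)\leq(1+\epsilon|k|_1)\,x^*(n)+M_\epsilon|k|_1
\]
directly from the sublinearity of $f$, divide by $a(n)$, and let $\epsilon\downarrow 0$. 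Your approach is more self-contained: it avoids the appeal to Theorem~\ref{theorem.growthmaxima}, eliminates the case split, and sidesteps a tacit assumption in the paper's $\rho=0$ case (namely that $H^*(n)\to\infty$, which is needed for Theorem~\ref{theorem.growthmaxima} to apply but is not guaranteed when $\rho=0$). The paper's approach, on the other hand, has the advantage of reusing machinery already established, so that once Theorem~\ref{theorem.growthmaxima} and Lemma~\ref{H(n)/a(n)} are in hand the present proof is almost immediate.
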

In the case when $\rho\in(0,\infty)$, large fluctuations of both $H$ and $x$ are described by the increasing sequence $\rho a$. 
If however, a sequence $a$ does not exist (or cannot readily be found) for which this holds, a very easy corollary of 
Theorem~\ref{theorem.limsup} gives upper and lower bounds on the fluctuations of $x$ in terms of those of $H$.
\begin{theorem} \label{theorem.limsup2}
Suppose $f$ obeys \eqref{eq.f1} and \eqref{eq.f2}, and $k$ obeys \eqref{eq.kl1}. 
Suppose also that there exist increasing sequences $(a_-(n))_{n\geq 1}$ and $(a_+(n))_{n\geq 1}$ with $a_{\pm}(n)\to\infty$ as $n\to\infty$ such that
\begin{equation*} 
\limsup_{n\to\infty}\frac{\left|H(n)\right|}{a_+(n)}=0, \quad 
\limsup_{n\to\infty}\frac{\left|H(n)\right|}{a_-(n)}=+\infty.  
\end{equation*}
Then the solution $x$ of \eqref{eq.x} obeys
\begin{equation*} 
\limsup_{n\to\infty}\frac{\left|x(n)\right|}{a_+(n)}=0, \quad 
\limsup_{n\to\infty}\frac{\left|x(n)\right|}{a_-(n)}=+\infty.  
\end{equation*}
\end{theorem}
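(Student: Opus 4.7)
The plan is to derive Theorem~\ref{theorem.limsup2} as an immediate corollary of Theorem~\ref{theorem.limsup}, by invoking that result twice with two different auxiliary sequences. The content of Theorem~\ref{theorem.limsup} that I will use is that for any single increasing sequence $a$ with $a(n)\to\infty$, the quantities $\limsup_{n\to\infty} |H(n)|/a(n)$ and $\limsup_{n\to\infty} |x(n)|/a(n)$ attain the same value $\rho\in[0,\infty]$; in particular, the ``$H$-limsup'' and the ``$x$-limsup'' against any fixed scale $a$ always coincide.

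First, I would apply Theorem~\ref{theorem.limsup} with the auxiliary sequence $a = a_+$, which is legitimate since $a_+$ is increasing and tends to infinity. The standing hypothesis $\limsup_{n\to\infty}|H(n)|/a_+(n) = 0$ is precisely condition (a) of that theorem with $\rho = 0$, so condition (c) yields $\limsup_{n\to\infty}|x(n)|/a_+(n) = 0$. This is the first conclusion of the theorem.

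Second, I would apply Theorem~\ref{theorem.limsup} with $a = a_-$, again a legitimate choice. The hypothesis $\limsup_{n\to\infty}|H(n)|/a_-(n) = +\infty$ is condition (a) with $\rho = +\infty$, so condition (c) gives $\limsup_{n\to\infty}|x(n)|/a_-(n) = +\infty$, which is the second conclusion.

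There is essentially no obstacle at this stage: all of the genuine work is absorbed into the proof of Theorem~\ref{theorem.limsup}, where one must use the sublinearity condition \eqref{eq.f2} on $f$ and the summability \eqref{eq.kl1} of $k$ to show that the Volterra convolution term in \eqref{eq.x} is negligible on the scale $a$ whenever $H$ is (and conversely). Once that equivalence is in hand, Theorem~\ref{theorem.limsup2} is merely a repackaging of the extreme cases $\rho = 0$ and $\rho = +\infty$ into a single statement that bounds the fluctuations of $x$ between two scales bracketing those of $H$.
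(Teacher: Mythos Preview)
Your proposal is correct and matches the paper's own proof essentially verbatim: the paper also derives Theorem~\ref{theorem.limsup2} by applying Theorem~\ref{theorem.limsup} once with $a=a_+$ (so $\rho=0$) and once with $a=a_-$ (so $\rho=+\infty$).
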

\begin{proof}
Take $a(n)=a_+(n)$ and note that $\rho=0$ in Theorem~\ref{theorem.limsup}. Applying Theorem~\ref{theorem.limsup}
gives the first limit in the conclusion of the result. The second limit is obtained by taking $a(n)=a_-(n)$, in which case 
$\rho=+\infty$, and Theorem~\ref{theorem.limsup} can be applied again.  
\end{proof}

It is equally reasonable to formulate results for the size of the positive and negative fluctuations in terms of auxiliary sequences. This result parallels Theorem~\ref{theorem.signedfluc2}. Rather than being comprehensive at the expense of repetition, we have considered the case when the positive fluctuations dominate the negative ones. Other results in this direction can be readily formulated and proven as desired using the same methods of proof: this result can be thought of as being representative. Applications of this result to Gaussian and heavy--tailed distributions are given in the next subsection.
\begin{theorem} \label{theorem.limsupsignedfluc2}
Suppose $f$ obeys \eqref{eq.f1} and \eqref{eq.f2}, $k$ obeys \eqref{eq.kl1} and that $x$ is the solution of \eqref{eq.x}.
Suppose also that there exist increasing sequences $(a_-(n))_{n\geq 1}$ and $(a_+(n))_{n\geq 1}$ with $a_{\pm}(n)\to\infty$ as $n\to\infty$ such that
\begin{equation*} 
\limsup_{n\to\infty}\frac{H(n)}{a_+(n)}=:\rho_+\in (0,\infty], \quad 
\liminf_{n\to\infty}\frac{H(n)}{a_-(n)}=:-\rho_-\in (-\infty,0], 
\end{equation*}
and 
\[
\lim_{n\to\infty} \frac{a_-(n)}{a_+(n)}=:\lambda\in [0,\infty).
\]
\begin{itemize}
\item[(i)] If $\lambda\in (0,\infty)$,  then
\begin{equation*} \limsup_{n\to\infty} x(n)=+\infty, 
\quad \liminf_{n\to\infty} x(n)=-\infty  
\end{equation*}
and
\begin{equation*} 
\limsup_{n\to\infty} \frac{x(n)}{a_+(n)}=\rho_+, 
\quad 
\liminf_{n\to\infty} \frac{x(n)}{a_-(n)}=-\rho_-.
\end{equation*}
\item[(ii)] If $\lambda=0$,  then
\begin{equation*} 
\limsup_{n\to\infty} x(n)=+\infty,
\end{equation*}
 and
\begin{equation*} 
\limsup_{n\to\infty} \frac{x(n)}{a_+(n)}=\rho_+, 
\quad 
\liminf_{n\to\infty} \frac{x(n)}{a_+(n)}=0. 
\end{equation*}
\end{itemize}
\end{theorem}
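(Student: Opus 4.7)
The plan is to exploit the basic decomposition
\[
x(n+1) = H(n+1) + V(n), \qquad V(n) := \sum_{j=0}^n k(n-j) f(x(j)),
\]
and to show that on the scale $a_+$ the Volterra term $V$ is asymptotically negligible, so that the signed $\limsup$ and $\liminf$ of $x$ are inherited from those of $H$. Two ingredients feed the argument. From the sublinearity of $f$, given $\varepsilon > 0$ there is $M_\varepsilon$ so that $|f(y)| \leq \varepsilon|y| + M_\varepsilon$ for every $y$, whence
\[
|V(n)| \leq \varepsilon |k|_1\, x^*(n) + M_\varepsilon |k|_1, \quad n \geq 0.
\]
Second, Theorem~\ref{theorem.growthmaxima} gives $x^*(n) \sim H^*(n)$ and Theorem~\ref{theorem.limsup} gives $\limsup_{n\to\infty} |x(n)|/a(n) = \limsup_{n\to\infty} |H(n)|/a(n)$ for any increasing $a\to\infty$. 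Together these will control the size of $V$.

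First I would compute $\limsup |H|/a_+$ and $\liminf H/a_+$ under each hypothesis on $\lambda$. In case (i), $a_-/a_+ \to \lambda \in (0,\infty)$ and multiplication by a positive convergent factor preserves $\liminf$, so $\liminf H(n)/a_+(n) = \lambda\cdot(-\rho_-) = -\lambda\rho_-$, and $\limsup|H|/a_+ = \max(\rho_+,\lambda\rho_-)$. In case (ii), $a_-/a_+\to 0$ together with the eventual bound $H(n)/a_-(n)\geq -\rho_- - \varepsilon$ gives $\liminf H/a_+\geq 0$; conversely, along a subsequence where $H/a_-$ attains its (finite) $\liminf$, $H/a_+ = (H/a_-)(a_-/a_+) \to 0$, producing $\liminf H/a_+ = 0$, and hence $\limsup|H|/a_+ = \rho_+$. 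Now assume $\rho_+<\infty$. Then Theorem~\ref{theorem.limsup} forces $x^*(n)/a_+(n)$ to be eventually bounded, and the sublinearity bound on $V$ with $\varepsilon$ sent to zero yields $V(n)/a_+(n) \to 0$. Dividing the recursion by $a_+(n+1)$ and using monotonicity of $a_+$, the signed $\limsup$ and $\liminf$ of $x/a_+$ agree with those of $H/a_+$, producing all four equalities involving $a_+$ in (i) and (ii). The conclusion $\liminf x/a_- = -\rho_-$ in (i) then follows by multiplying by $a_+/a_-\to 1/\lambda\in(0,\infty)$, and the unboundedness statements $\limsup x = +\infty$, $\liminf x = -\infty$ fall out by extracting subsequences that realise the (strictly positive or negative) ratio limits and using $a_\pm(n) \to \infty$.

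The main obstacle is the residual case $\rho_+ = +\infty$: now $x^*/a_+$ need not be bounded and the crude estimate $V/a_+ \to 0$ breaks down. I would handle this by a dedicated subsequence argument. Pick $n_k$ inductively so that $H(n_k)/a_+(n_k) > 2\sup_{j<n_k} H(j)/a_+(j)$, which is possible since $\limsup H/a_+ = \infty$. This choice forces $H^*_+(n_k-1) \leq H(n_k)/2$, while the hypothesis on $H/a_-$ yields $H^*_-(n_k-1) = O(a_-(n_k-1)) = o(H(n_k))$ in both cases, using $a_-/a_+\to\lambda\in[0,\infty)$ and $H(n_k)/a_+(n_k)\to\infty$. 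Hence $H^*(n_k-1)\leq (\tfrac12 + o(1))H(n_k)$ and, by Theorem~\ref{theorem.growthmaxima}, $x^*(n_k-1)\leq (\tfrac12 + o(1))H(n_k)$, so the sublinearity bound gives $|V(n_k-1)|/H(n_k)\leq \tfrac12\varepsilon|k|_1 + o(1)$ for arbitrary $\varepsilon>0$. It follows that $x(n_k)/H(n_k)\to 1$, so $x(n_k)/a_+(n_k)\to\infty$, which delivers $\limsup x/a_+ = +\infty = \rho_+$ and $\limsup x(n) = +\infty$. The $\liminf$ assertions in the $\rho_+ = \infty$ branch of case~(ii) are obtained analogously, by running the same subsequence analysis along indices where $H/a_+\to 0$.
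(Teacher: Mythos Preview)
Your approach when $\rho_+<\infty$ is correct and cleaner than the paper's. You show directly that $V(n)/a_+(n)\to 0$ (since $x^\ast(n)/a_+(n)$ is bounded via Theorem~\ref{theorem.limsup}) and then read $\limsup$ and $\liminf$ of $x/a_+$ straight from $x(n+1)=H(n+1)+V(n)$. The paper instead routes everything through the running signed maxima $x^\ast_\pm$, $H^\ast_\pm$ and a separate auxiliary result (Lemma~\ref{H(n)/a(n)2}) converting between $\limsup H_+^\ast/a_+$ and $\limsup H/a_+$; your pointwise argument bypasses that detour. One small caveat: your derivation of $\liminf x=-\infty$ in~(i) uses that the ratio limit $-\rho_-$ is strictly negative, so the edge case $\rho_-=0$ is not covered (the paper's argument has the same weakness).

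There is a genuine gap in the case $\rho_+=\infty$. Your inductive choice of $n_k$ with $H(n_k)/a_+(n_k)>2\sup_{j<n_k}H(j)/a_+(j)$ need not exist: if $H(n)/a_+(n)=n$ for all $n$, then $\sup_{j<n}H(j)/a_+(j)=n-1$ and the inequality $n>2(n-1)$ fails for every $n\ge 2$. The idea is salvageable for the $\limsup$ claim by taking $n_k$ to be record times for $H(\cdot)/a_+(\cdot)$, giving only $H^\ast_+(n_k-1)\le H(n_k)$ rather than $H(n_k)/2$; one still gets $x^\ast(n_k-1)\le (1+o(1))H(n_k)$ via Theorem~\ref{theorem.growthmaxima}, hence $|V(n_k-1)|/H(n_k)\le \varepsilon|k|_1(1+o(1))$, which suffices to force $x(n_k)/a_+(n_k)\to\infty$. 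However, your last sentence---that the $\liminf$ assertion in~(ii) follows ``analogously'' when $\rho_+=\infty$---is not salvageable: along a subsequence with $H(n_k)/a_+(n_k)\to 0$ there is no control on $x^\ast(n_k-1)/a_+(n_k)$, and in fact the conclusion can fail. Take $f(x)=\sgn(x)\sqrt{|x|}$, $k(0)=-1$, $k(j)=0$ for $j\ge 1$, $a_+(n)=n$, $a_-(n)=\sqrt{n}$, $H(2m)=m^3$, $H(2m+1)=0$: all hypotheses of~(ii) hold with $\rho_+=\infty$, $\rho_-=0$, $\lambda=0$, yet $x(2m+1)=-f(x(2m))\sim -m^{3/2}$, so $x(2m+1)/a_+(2m+1)\to -\infty$. (The paper's own proof, incidentally, writes $S^\ast(n)/a_+(n)\to 0$, which likewise presupposes $\rho_+<\infty$.)
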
 

\subsection{Applications to stochastic processes} 
Let $H(n)$ be a sequence of independent and identically distributed random 
variables each with distribution function $F$. For simplicity suppose that the distribution is continuous and supported on all of $\mathbb{R}$ (so that the random variables are unbounded and can take arbitrarily large positive and negative values). What follows is all well--known, but we record our conclusions to assist stating applying our results, which we do momentarily. 

Since each $H$ has distribution function $F$ we have  
\[
\mathbb{P}[|H(n)|>Ka(n)]=1-F(Ka(n))+F(-Ka(n)).
\]
Define 
\[
S(a,K)=\sum_{n=0}^\infty \{1-F(Ka(n))+F(-Ka(n))  \}.
\]
Since the events $\{|H(n)|>Ka(n)\}$ are independent, we have that from the Borel--Cantelli Lemma that 
\[
\mathbb{P}[|H(n)|>Ka(n) \text{ i.o.}]
=\left\{\begin{array}{cc}
0, & \text{if $S(a,K)<+\infty$}, \\
1, & \text{if $S(a,K)=+\infty$}. 
\end{array}
\right.
\]
Therefore, for all $K>0$ such that $S(a,K)<+\infty$ we have that there is an a.s. event $\Omega_K^+$ such that
\[
\limsup_{n\to\infty} \frac{|H(n)|}{a(n)} \leq K, \quad\text{on $\Omega_K^+$}.
\]
On the other hand, for all $K>0$ such that  $S(a,K)=+\infty$ we have that there is an a.s. event $\Omega_K^-$ such that
\[
\limsup_{n\to\infty} \frac{|H(n)|}{a(n)} \geq K, \quad\text{on $\Omega_K^-$}.
\]
It can be seen therefore that it may be possible for a well--chosen sequence $a$ and number $K$ sequence $Ka(n)$ for which $S(a,K)$ is either finite or infinite. This will then generate upper and lower bounds on the growth of $|H(n)|$, and thereby, by then applying Theorem~\ref{theorem.limsup}, allow conclusions about the growth of the fluctuations of $x$ to be deduced.

In the first example, we are able to find a sequence $a$ for which $\Lambda_a|H|\in (0,\infty)$. 
\begin{example} \label{examp.normal}
Suppose that $H(n)$ is a sequence of independent normal random variables with mean zero and variance $\sigma^2>0$. 
Take $a(n)=\sqrt{2 \log n}$. Then it is well--known for every $\epsilon\in (0,\sigma)$ that we have  
\[
S(a,\sigma+\epsilon)<+\infty, \quad S(a,\sigma-\epsilon)=+\infty.
\]
Therefore, there are a.s. events $\Omega_\epsilon^{\pm}$ such that 
\[
\limsup_{n\to\infty} \frac{|H(n)|}{\sqrt{2\log n}} \geq \sigma-\epsilon, \quad \text{a.s. on $\Omega_\epsilon^-$} 
\]
and
\[
\limsup_{n\to\infty} \frac{|H(n)|}{\sqrt{2\log n}} \leq \sigma+\epsilon, \quad \text{a.s. on $\Omega_\epsilon^+$} 
\]
Now consider 
\[
\Omega^\ast=\cap_{\epsilon\in \mathbb{Q}\cap(0,\sigma)} \Omega_\epsilon^+ \cap \cap_{\epsilon\in \mathbb{Q}\cap\cap(0,\sigma)} \Omega_\epsilon^-.
\]
Then $\Omega^\ast$ is an almost sure event and we have 
\[
\limsup_{n\to\infty} \frac{|H(n)|}{\sqrt{2\log n}} = \sigma, \quad \text{on $\Omega^\ast$}. 
\]
Hence we can apply Theorem~\ref{theorem.limsup} to \eqref{eq.x} with $a(n)=\sqrt{2\log n}$ to get that 
\[
\limsup_{n\to\infty} \frac{|x(n)|}{\sqrt{2\log n}} =\sigma, \quad \text{a.s.}
\]
A similar argument applies to signed fluctuations as well. We can use the Borel--Cantelli lemmas to prove that 
\[
\limsup_{n\to\infty} \frac{H(n)}{\sqrt{2\log n}}=\sigma, \quad \liminf_{n\to\infty} \frac{H(n)}{\sqrt{2\log n}}=-\sigma, \quad \text{a.s.}
\]
Therefore, by Theorem~\ref{theorem.limsupsignedfluc2} we get 
\[
\limsup_{n\to\infty} \frac{x(n)}{\sqrt{2\log n}}=\sigma, \quad \liminf_{n\to\infty} \frac{x(n)}{\sqrt{2\log n}}=-\sigma, \quad \text{a.s.}
\]
\end{example}

Next we consider the case of a symmetric heavy tailed distribution with power law decay in the tails. In this case, we find sequences 
$a_+$ and $a_-$ such that $a_-=o(a_+)$ and 
\[
\limsup_{n\to\infty} \frac{|H(n)|}{a_+(n)}=0, \quad \limsup_{n\to\infty} \frac{|H(n)|}{a_-(n)}=+\infty, \quad \text{a.s.}
\]
Even though $a_+$ dominates $a_-$ asymptotically, $a_+$ and $a_-$ will have very similar asymptotic behaviour. It follows from Theorem~\ref{theorem.limsup2} that 
\[
\limsup_{n\to\infty} \frac{|x(n)|}{a_+(n)}=0, \quad \limsup_{n\to\infty} \frac{|x(n)|}{a_-(n)}=+\infty, \quad \text{a.s.}
\]

\begin{example} \label{examp.heavytail}
Suppose that $H(n)$ are independently and identically distributed random variables such that there is $\alpha>0$ and finite $c_1,c_2>0$ 
for which
\[
F(x)\sim c_1 |x|^{-\alpha}, \quad x\to -\infty, \quad 1-F(x)\sim c_2 x^{-\alpha}, \quad x\to +\infty.  
\]
Suppose that $a_+$ and $a_-$ are sequences such that 
\begin{equation} \label{eq.apaminpower}
\sum_{n=0}^\infty a_+(n)^{-\alpha} <+\infty, \quad \sum_{n=0}^\infty a_-(n)^{-\alpha} =+\infty.
\end{equation}
Then we see that $S(K,a_+)<+\infty$ for all $K>0$ while $S(K,a_-)=+\infty$ for all $K> 0$. Therefore we have for all $K>0$
\[
\limsup_{n\to\infty} \frac{|H(n)|}{a_+(n)} \leq K, \quad\text{on $\Omega_K^+$}.
\]
Consider the event $\Omega^+=\cap_{K\in \mathbb{Q}^+} \Omega_K^+$. Then $\Omega^+$ is an almost sure event and we have 
\[
\limsup_{n\to\infty} \frac{|H(n)|}{a_+(n)}=0,  \quad\text{on $\Omega^+$}.
\]
On the other hand, for all $K>0$ we have that there is an a.s. event $\Omega_K^-$ such that
\[
\limsup_{n\to\infty} \frac{|H(n)|}{a_-(n)} \geq K, \quad\text{on $\Omega_K^-$}.
\]
Consider the event $\Omega^-=\cap_{K\in \mathbb{Z}^+} \Omega_K^+$. Then $\Omega^-$ is an almost sure event and we have 
\[
\limsup_{n\to\infty} \frac{|H(n)|}{a_-(n)}=+\infty,  \quad\text{on $\Omega^-$}.
\]
Finally, let $\Omega^\ast=\Omega^+\cap\Omega^-$. It is an almost sure event and we have that 
\[
\limsup_{n\to\infty} \frac{|H(n)|}{a_+(n)}=0,  \quad  \limsup_{n\to\infty} \frac{|H(n)|}{a_-(n)}=\infty \text{ on $\Omega^\ast$}.
\] 
Applying Theorem~\ref{theorem.limsup2} we therefore see that \eqref{eq.apaminpower} implies 
\[
\limsup_{n\to\infty} \frac{|x(n)|}{a_+(n)}=0, \quad \limsup_{n\to\infty} \frac{|x(n)|}{a_-(n)}=+\infty, \text{ on $\Omega^\ast$}.
\]
By similar arguments we can obtain bounds on the signed fluctuations as well. In fact   \eqref{eq.apaminpower} implies 
\begin{gather*}
\limsup_{n\to\infty} \frac{x(n)}{a_+(n)}=0, \quad \limsup_{n\to\infty} \frac{x(n)}{a_-(n)}=+\infty, \quad \text{a.s.}\\
\liminf_{n\to\infty} \frac{x(n)}{a_+(n)}=0, \quad \liminf_{n\to\infty} \frac{x(n)}{a_-(n)}=-\infty, \quad \text{a.s.}
\end{gather*}

To show we can get $a_+$ and $a_-$ close, notice that for every $\epsilon>0$ sufficiently small we can take 
$a_{\pm}(n)$ to be $a_{\pm\epsilon}(n)=n^{1/\alpha\pm\epsilon}$. 

It is now standard to get limits independent of the small parameter $\epsilon$, and we show now how this can be done. First, from the existence of the sequences $a_{\pm\epsilon}$ we may conclude from that 
there are a.s. events $\Omega_\epsilon^-$ and $\Omega_\epsilon^+$ such that
\[
\limsup_{n\to\infty} \frac{|x(n)|}{n^{1/\alpha -\epsilon}}=+\infty, \text{ on $\Omega_\epsilon^-$}
\]
and 
\[
\limsup_{n\to\infty} \frac{|x(n)|}{n^{1/\alpha +\epsilon}}=0, \text{ on $\Omega_\epsilon^+$}.
\]
Now we seek $\epsilon$--independent limits. We conclude from the first limit that 
\[
\limsup_{n\to\infty} \frac{\log |x(n)|}{\log n} \geq \frac{1}{\alpha}-\epsilon, \text{ on $\Omega_\epsilon^-$}
\]
and from the second that
\[
\limsup_{n\to\infty} \frac{\log |x(n)|}{\log n} \leq \frac{1}{\alpha}+\epsilon, \text{ on $\Omega_\epsilon^+$}.
\]
Finally, take 
\[
\Omega^\ast=\cap_{\epsilon\in \mathbb{Q}^+} \Omega_\epsilon^+ \cap \cap_{\epsilon\in \mathbb{Q}^+} \Omega_\epsilon^-.
\]
This is an a.s. event, and we have 
\[
\limsup_{n\to\infty} \frac{\log|x(n)|}{\log n}=\frac{1}{\alpha}, \quad\text{on $\Omega^\ast$}.
\]
Hence
\[
\limsup_{n\to\infty} \frac{\log|x(n)|}{\log n}=\frac{1}{\alpha}, \quad\text{a.s.}
\]
A similar analysis of the positive and negative fluctuations leads to 
\[
\limsup_{n\to\infty} \frac{\log x(n)}{\log n}=\frac{1}{\alpha}, \quad
\limsup_{n\to\infty} \frac{\log (-x(n))}{\log n}=\frac{1}{\alpha}, \quad 
\text{a.s.}
\]
\end{example}

\subsection{Time Averages} 
The main theme of the results we have presented is that the properties of the forcing sequence $H$ are reflected in the solution $x$ of 
equation \eqref{eq.x}. So far, we have concentrated on the boundedness or unboundedness of solutions, the size of fluctuations of solutions, the growth rate of solutions, and the times at which the forcing sequence and solution reach record maxima. In this final section we explore one further connection between $H$ and $x$, which does not relate to the \emph{pointwise} size of $H$ and $x$, but rather their \emph{average} values. This is of particular interest in the case that $H$ is a stochastic sequence, because such sequences can be unbounded, but can have finite time averages. 

Very roughly, our most general result states that if $\varphi$ is an increasing convex function, then the finiteness of the ``$\varphi$''--moments
\[
\limsup_{n\to\infty} \frac{1}{n}\sum_{j=1}^n \varphi(|H(j)|)<+\infty \text{ and }
\limsup_{n\to\infty} \frac{1}{n}\sum_{j=1}^n \varphi(|x(j)|)<+\infty 
\]
are equivalent, modulo some small adjustments inside the argument of $\varphi$. In the important case that $\varphi(x)=x^p$ for $p\geq 1$, these small adjustments are unnecessary, and we have that 
\[
\limsup_{n\to\infty} \frac{1}{n}\sum_{j=1}^n |H(j)|^p<+\infty \text{ if and only if }
\limsup_{n\to\infty} \frac{1}{n}\sum_{j=1}^n |x(j)|^p<+\infty, 
\] 
so that the $p$--th moment of $x$ is finite if and only if the $p$--the moment of $H$ is. The equivalence of the finiteness of the $\varphi$--moments also holds in the more general case that $\varphi$ is a regularly varying function at infinity.

In order to make our discussion precise, we recall the definition of convexity of a real function, and a discrete variant of an important inequality relating to convex functions, namely Jensen's inequality. 
\theoremstyle{definition}
\begin{definition}
Let $I$ be a convex set in $\mathbb{R}$ and let $\varphi:I\to\mathbb{R}$. Then $\varphi$ is convex on $I$ if and only if 
\begin{equation*} \varphi(tx_1+(1-t)x_2)\leq t\varphi(x_1)+(1-t)\varphi(x_2) \quad \text{for all $x_1,x_2 \in I$ and all $t\in \left[0,1\right]$}.
\end{equation*}
\end{definition}
\begin {lemma}[Jensen's Inequality] \label{lemma.Jensen}
If $0\leq a_1,a_2,...a_n$ are such that $\sum_{i=1}^{n} a_i=1$ and if $\varphi$ is a convex function, then
\begin{equation*} \varphi\left(\sum_{i=1}^{n} a_ix_i\right)\leq \sum_{i=1}^{n} a_i\varphi(x_i). \end{equation*}
\end{lemma}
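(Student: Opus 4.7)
The plan is to establish Jensen's inequality by induction on $n$, the number of points being combined. The base case $n=1$ is trivial (since necessarily $a_1=1$ and the claimed inequality becomes $\varphi(x_1)\leq \varphi(x_1)$), and the case $n=2$ is nothing other than the definition of convexity: given $a_1,a_2\geq 0$ with $a_1+a_2=1$, setting $t=a_1$ in the defining inequality yields $\varphi(a_1x_1+a_2x_2)\leq a_1\varphi(x_1)+a_2\varphi(x_2)$.

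For the inductive step, I would assume the inequality holds for any collection of $n$ nonnegative weights summing to one, and then prove it for $n+1$ such weights $a_1,\ldots,a_{n+1}$. First, dispose of the degenerate case $a_{n+1}=1$ (in which all other $a_i$ must vanish), where both sides equal $\varphi(x_{n+1})$. Otherwise $t:=1-a_{n+1}\in(0,1]$, and I would rewrite
\begin{equation*}
\sum_{i=1}^{n+1} a_i x_i = t\cdot \sum_{i=1}^{n}\frac{a_i}{t}x_i + a_{n+1}x_{n+1},
\end{equation*}
observing that the weights $b_i:=a_i/t$ are nonnegative and satisfy $\sum_{i=1}^n b_i = 1$ by construction. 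Applying the $n=2$ case of convexity (with weights $t$ and $a_{n+1}=1-t$) yields
\begin{equation*}
\varphi\left(\sum_{i=1}^{n+1} a_i x_i\right) \leq t\,\varphi\left(\sum_{i=1}^{n} b_i x_i\right) + a_{n+1}\varphi(x_{n+1}).
\end{equation*}
Now the inductive hypothesis applies to the first term on the right (since the $b_i$ form a valid weight system of length $n$), giving $\varphi(\sum_{i=1}^n b_i x_i)\leq \sum_{i=1}^n b_i\varphi(x_i)$. Substituting and recalling $tb_i=a_i$ produces $\sum_{i=1}^{n+1}a_i\varphi(x_i)$ on the right, which completes the induction.

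There is no genuine obstacle here, as the argument is entirely standard; the only care needed is to handle the boundary case $a_{n+1}=1$ so that the division by $t$ in the rewriting step is legitimate. One might also want to allow implicitly that the inequality is stated over a convex set $I$ containing the $x_i$, but since the lemma as stated does not restrict the domain, this is subsumed by taking $I=\mathbb{R}$ (or, in the paper's intended application, $I=[0,\infty)$, which is convex and contains the arguments $|H(j)|$ and $|x(j)|$).
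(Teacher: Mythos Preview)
Your induction argument is correct and is the standard textbook proof of the discrete Jensen inequality. The paper, however, does not prove this lemma at all: it is stated as a known result and simply invoked in the proof of Theorem~\ref{theorem.ergodic}. So there is nothing to compare against; your proposal supplies a proof where the paper gives none.
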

We state next our main result: its proof is in the last section of the paper. 
\begin {theorem} \label{theorem.ergodic}
Suppose $f$ obeys \eqref{eq.f1} and \eqref{eq.f2}, and $k$ obeys \eqref{eq.kl1}.
Let $x$ be the solution of \eqref{eq.x} and $\varphi:[0,\infty)\to [0,\infty)$ be an increasing convex function.
\begin{itemize}
\item[(i)]
If there exists $\eta>0$ such that 
\begin{equation*} 
\limsup_{n\to\infty} \frac{1}{n} \sum_{j=1}^{n} \varphi\big((1+\eta)\left|H(j)\right|\big)<+\infty,
\end{equation*}
then 
\begin{equation*} \limsup_{n\to\infty} \frac{1}{n} \sum_{j=0}^{n} \varphi\big(\left|x(j)\right|\big)<+\infty. 
\end{equation*}
\item[(ii)]
If there exists $\eta>0$ such that 
\begin{equation*} \limsup_{n\to\infty} \frac{1}{n} \sum_{j=0}^{n} \varphi\big((1+\eta)\left|x(j)\right|\big)<+\infty,
\end{equation*}
then 
\begin{equation*} \limsup_{n\to\infty} \frac{1}{n} \sum_{j=1}^{n} \varphi\big(\left|H(j)\right|\big)<+\infty. 
\end{equation*}
\end{itemize}
\end{theorem}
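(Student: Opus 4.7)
The two parts are symmetric inequalities coming from applying Jensen to the defining equation \eqref{eq.x} and its rearrangement $H(n+1)=x(n+1)-\sum_{j=0}^n k(n-j)f(x(j))$. The key device is to combine the sublinearity hypothesis \eqref{eq.f2} (which gives, for every $\epsilon>0$, a constant $M=M(\epsilon)$ with $|f(x)|\le \epsilon|x|+M$) with a carefully calibrated convex combination so that, when the increasing convex $\varphi$ is applied, the $H$-piece appears exactly as $\varphi((1+\eta)|H|)$ and the $|x|$-piece appears as a convex average (not just some positive combination) of $\varphi(|x(j)|)$'s.

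For part (i), I would first take $|\cdot|$ in \eqref{eq.x} to get
\[
|x(n+1)|\le |H(n+1)|+\epsilon\sum_{j=0}^n|k(n-j)||x(j)|+M|k|_1.
\]
Choosing $\epsilon$ so small that $\epsilon|k|_1<\eta/(1+\eta)$, set $\alpha_1=1/(1+\eta)$, $\alpha_3=\epsilon|k|_1$, and $\alpha_2=1-\alpha_1-\alpha_3>0$; then, padding the weights $|k(n-j)|/|k|_1$ with a mass at the point $0$ so that they sum to $1$, the above reads as a convex combination $\alpha_1\cdot(1+\eta)|H(n+1)|+\alpha_2\cdot(M|k|_1/\alpha_2)+\alpha_3\bar x_n$ with $\bar x_n$ a probability average. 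Jensen's inequality then yields
\[
\varphi(|x(n+1)|)\le \frac{1}{1+\eta}\varphi\bigl((1+\eta)|H(n+1)|\bigr)+C+\epsilon\sum_{j=0}^{n}|k(n-j)|\varphi(|x(j)|),
\]
where $C$ absorbs $\alpha_2\varphi(M|k|_1/\alpha_2)$ and $\epsilon|k|_1\varphi(0)$.

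Summing this inequality over $n=0,\dots,N-1$ and swapping the order of summation in the Volterra term (using $\sum_{n\ge j}|k(n-j)|\le|k|_1$) gives
\[
S(N)-\varphi(|\xi|)\le \frac{1}{1+\eta}T(N)+NC+\epsilon|k|_1\,S(N),
\]
with $S(N)=\sum_{j=0}^N\varphi(|x(j)|)$ and $T(N)=\sum_{j=1}^N\varphi((1+\eta)|H(j)|)$. Since $\epsilon|k|_1<1$ the last term is absorbable on the left; dividing by $N$ and taking $\limsup$ delivers part (i). For part (ii) I start from the rearranged equation and repeat the construction: choose $\epsilon$ with $\epsilon|k|_1<\eta$, put $\alpha_1=1/(1+\eta)$ on the $|x(n+1)|$ slot and $\alpha_3=\epsilon|k|_1/(1+\eta)$ on the averaged slot, so that Jensen gives $\varphi(|H(n+1)|)\le \alpha_1\varphi((1+\eta)|x(n+1)|)+C'+\alpha_3\sum_j(|k(n-j)|/|k|_1)\varphi((1+\eta)|x(j)|)$; summing and swapping is now immediate because no $|H|$-sum reappears on the right, so no absorption is needed.

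The main obstacle is arranging the single convex combination that simultaneously (a) places the exact factor $(1+\eta)$ on $|H(n+1)|$ required by the hypothesis, (b) keeps the coefficient of $\bar x_n$ strictly below $1$ so the summed-over-$n$ Jensen bound can be absorbed on the left, and (c) traps the unavoidable additive constant from $|f(x)|\le\epsilon|x|+M$ inside its own weighted slot $\alpha_2(M|k|_1/\alpha_2)$ rather than inside the same $\varphi$-argument as $|H(n+1)|$; this is what forces the quantitative choice $\epsilon|k|_1<\eta/(1+\eta)$ in part (i). Everything else is bookkeeping with Fubini-type interchange and $|k|_1<\infty$.
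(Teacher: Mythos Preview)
Your approach is essentially the same as the paper's: write the basic triangle--inequality estimate as a genuine three--term convex combination with the $(1+\eta)$ factor placed exactly on the $|H|$ (respectively $|x|$) slot, apply Jensen twice (once for the outer combination, once for the inner probability average over the kernel weights), sum, interchange, and absorb in part (i). The one real technical difference is how you turn $\sum_{j=0}^n|k(n-j)|\,|x(j)|$ into a probability average: the paper normalises by the $n$-dependent partial sum $\sum_{l=0}^n|k(l)|$ and then, for $n\ge N_1$, uses $\sum_{l=0}^n|k(l)|\ge\tfrac12|k|_1$, which introduces a factor $2$ (leading to the choice $2\epsilon|k|_1=\eta/(1+\eta)$ and a split into $n<N_1$ and $n\ge N_1$). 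Your padding trick---normalising by the full $|k|_1$ and placing the leftover mass on the value $0$---avoids both the factor $2$ and the case split, and is therefore a slightly cleaner bookkeeping device, but the underlying architecture of the proof is identical.
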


It can be seen that we have nearly shown the equivalence of the finiteness of 
\begin{equation*} 
\limsup_{n\to\infty} \frac{1}{n} \sum_{j=1}^{n} \varphi\big(\left|H(j)\right|\big) 
\text{ and } 
\limsup_{n\to\infty} \frac{1}{n} \sum_{j=1}^{n} \varphi\big(\left|x(j)\right|\big). 
\end{equation*}
However, when applying Jensen's inequality to estimate the sums, it is necessary to impose a slightly stronger summability hypothesis on $H$ in order to get the finiteness of the ``$\varphi$''--moment of $x$. In the case when $\varphi(x)=x^p$ (or more generally when $\varphi$ is a convex and regularly varying function at infinity (see e.g.~\cite{BGT})) we can forego this slight restriction, and show that the existence of the $\varphi$--moments of $H$ and $x$ are equivalent. This result is of particular interest if $H$ is a stationary stochastic process, for it shows that the only way in which $x$ will have a finite $p$--th moment is if $H$ does also. This also enables us to make predictions about so--called moment explosion: if, for some $p$, 
\[
\limsup_{n\to\infty} \frac{1}{n}\sum_{j=1}^n |H(j)|^p = +\infty,
\]
then it is automatically true by the next result, that 
\[
\limsup_{n\to\infty} \frac{1}{n}\sum_{j=1}^n |x(j)|^p = +\infty.
\]
\begin{theorem} \label{theorem.pthmoment}
Suppose $f$ obeys \eqref{eq.f1} and \eqref{eq.f2}, and $k$ obeys \eqref{eq.kl1}. Let $x$ be the solution of \eqref{eq.x} and $p\geq 1$. Then the following are equivalent:
\begin{itemize}
\item[(i)] 
\[
\limsup_{n\to\infty} \frac{1}{n} \sum_{j=1}^{n} \left|H(j)\right|^p <+\infty;
\]
\item[(ii)] 
\[
\limsup_{n\to\infty} \frac{1}{n} \sum_{j=1}^{n} \left|x(j)\right|^p <+\infty;
\]
\end{itemize}
\end{theorem}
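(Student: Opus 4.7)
The plan is to deduce Theorem~\ref{theorem.pthmoment} as an essentially immediate corollary of Theorem~\ref{theorem.ergodic} by exploiting the homogeneity of the power function $\varphi(x)=x^p$. Since $p\geq 1$, the function $\varphi:[0,\infty)\to[0,\infty)$ defined by $\varphi(x)=x^p$ is increasing and convex, so Theorem~\ref{theorem.ergodic} applies with this choice of $\varphi$.

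The crucial observation is that for any $\eta>0$ and any $y\geq 0$ we have $\varphi((1+\eta)y)=(1+\eta)^p y^p=(1+\eta)^p\varphi(y)$. Consequently, for every sequence $(y(j))$,
\begin{equation*}
\frac{1}{n}\sum_{j=1}^n \varphi\big((1+\eta)|y(j)|\big)=(1+\eta)^p\cdot\frac{1}{n}\sum_{j=1}^n |y(j)|^p,
\end{equation*}
and since $(1+\eta)^p$ is a finite positive constant, the finiteness of the $\limsup$ of the left-hand side is equivalent to the finiteness of the $\limsup$ of $\frac{1}{n}\sum_{j=1}^n|y(j)|^p$. This removes the slight strengthening of the hypothesis that was required in Theorem~\ref{theorem.ergodic}.

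To prove (i)$\Rightarrow$(ii), I assume $\limsup_{n\to\infty}\frac{1}{n}\sum_{j=1}^n|H(j)|^p<+\infty$. Then for any fixed $\eta>0$, the displayed identity above (applied to $H$) yields $\limsup_{n\to\infty}\frac{1}{n}\sum_{j=1}^n\varphi((1+\eta)|H(j)|)<+\infty$, so Theorem~\ref{theorem.ergodic}(i) gives $\limsup_{n\to\infty}\frac{1}{n}\sum_{j=0}^n|x(j)|^p<+\infty$; dropping the $j=0$ term changes nothing. The reverse implication (ii)$\Rightarrow$(i) is symmetric: assuming finiteness of the $\limsup$ for $x$, the identity (now applied to $x$) verifies the hypothesis of Theorem~\ref{theorem.ergodic}(ii), which delivers the conclusion for $H$.

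There is no real obstacle here beyond recording the scaling identity and verifying convexity of $x\mapsto x^p$ on $[0,\infty)$ for $p\geq 1$; the work has already been done in Theorem~\ref{theorem.ergodic}. The remark in the paper following Theorem~\ref{theorem.ergodic} (about regularly varying $\varphi$) suggests the same argument goes through whenever $\varphi((1+\eta)x)\leq C(\eta)\varphi(x)$ for large $x$ with $C(\eta)$ finite, but for the present statement the pure power case suffices.
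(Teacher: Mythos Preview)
Your proposal is correct and follows essentially the same approach as the paper's own proof: both take $\varphi(x)=x^p$, note it is increasing and convex for $p\geq 1$, use the homogeneity $\varphi((1+\eta)x)=(1+\eta)^p\varphi(x)$ to pass freely between the hypotheses and conclusions of Theorem~\ref{theorem.ergodic}, and then invoke parts (i) and (ii) of that theorem for the two implications.
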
 
\begin{proof}
Let $\varphi(x)=x^p$. Since $p\geq 1$, $\varphi$ is an increasing convex function from $[0,\infty)$ to $[0,\infty)$. Suppose that (i) is true i.e.,
\[
\limsup_{n\to\infty} \frac{1}{n} \sum_{j=1}^{n} \left|H(j)\right|^p <+\infty.
\]
Therefore, because $\varphi((1+\eta)x)=(1+\eta)^p \varphi(x)$ for every $x\geq 0$ and $\eta>0$, we have 
\begin{align*}
\limsup_{n\to\infty} \frac{1}{n} \sum_{j=1}^{n} \varphi((1+\eta)\left|H(j)\right|)
&=
\limsup_{n\to\infty} \frac{1}{n} \sum_{j=1}^{n} (1+\eta)^p\left|H(j)\right|^p \\
&=(1+\eta)^p\limsup_{n\to\infty} \frac{1}{n} \sum_{j=1}^{n} \left|H(j)\right|^p 
<+\infty.
\end{align*}
Therefore, by part (i) of Theorem \ref{theorem.ergodic}, it follows that 
\[
\limsup_{n\to\infty} \frac{1}{n} \sum_{j=1}^{n} \left|x(j)\right|^p=
\limsup_{n\to\infty} \frac{1}{n} \sum_{j=1}^{n} \varphi(\left|x(j)\right|) <+\infty,
\]
which is (ii). Conversely, suppose (ii) holds. Again, using $\varphi((1+\eta)x)=(1+\eta)^p \varphi(x)$ for every $x\geq 0$ and $\eta>0$, we get
\begin{align*}
\limsup_{n\to\infty} \frac{1}{n} \sum_{j=1}^{n} \varphi((1+\eta)\left|x(j)\right|)
&=
\limsup_{n\to\infty} \frac{1}{n} \sum_{j=1}^{n} (1+\eta)^p\left|x(j)\right|^p \\
&= (1+\eta)^p\limsup_{n\to\infty} \frac{1}{n} \sum_{j=1}^{n}\left|x(j)\right|^p 
<+\infty.
\end{align*}
Therefore, by part (ii) of Theorem \ref{theorem.ergodic}, it follows that 
\[
\limsup_{n\to\infty} \frac{1}{n} \sum_{j=1}^{n} \left|H(j)\right|^p=
\limsup_{n\to\infty} \frac{1}{n} \sum_{j=1}^{n} \varphi(\left|H(j)\right|) <+\infty,
\]
which is (i). Hence we have shown that the statements (i) and (ii) are equivalent, as claimed. 
\end{proof}

\subsection{Applications to stochastic processes}
We suppose as earlier that $H$ is a sequence of independent and identically distributed random variables with distribution 
function $F$ and support on $\mathbb{R}$. Then $\varphi(|H(n)|)$ has a finite mean if and only if 
\[
\int_{x\in \mathbb{R}} \varphi(|x|)\,dF(x)<+\infty.
\]
If this is the case, by the strong law of large numbers, we have that 
\[
\lim_{n\to\infty} \frac{1}{n}\sum_{j=1}^n \varphi(|H(j)|)= \int_{x\in \mathbb{R}} \varphi(|x|)\,dF(x), \quad \text{a.s.}
\]
In fact, it is even true when 
\[
\int_{x\in \mathbb{R}} \varphi(|x|)\,dF(x)=+\infty
\]
that 
\[
\lim_{n\to\infty} \frac{1}{n}\sum_{j=1}^n \varphi(|H(j)|)= +\infty, \quad \text{a.s.}
\]
It is then a matter of checking whether we can introduce the small parameter $\eta>0$ into the argument of $\varphi$ in order to apply 
Theorem~\ref{theorem.ergodic}. We show now, by re--examining the heavy--tailed and Gaussian examples studied earlier, that this can be 
achieved with relative success in a number of situations.

\begin{example} \label{examp.heavytail2}
In Example~\ref{examp.heavytail}, we have that $F(x)\sim c_1|x|^{-\alpha}$ as $x\to-\infty$ and $1-F(x)\sim c_2x^{-\alpha}$ as $x\to\infty$. Suppose that $\varphi(x)=x^p$. Then $p<\alpha$ implies 
\[
\int_{x\in \mathbb{R}} \varphi(|x|)\,dF(x)<+\infty
\]
and therefore 
\[
\lim_{n\to\infty} \frac{1}{n}\sum_{j=1}^n |H(j)|^p = \int_{x\in \mathbb{R}} |x|^p\,dF(x), \quad\text{a.s.}
\]
In the case that $p\geq \alpha$ 
\[
\lim_{n\to\infty} \frac{1}{n}\sum_{j=1}^n |H(j)|^p =+\infty, \quad \text{a.s.}
\]
Hence by Theorem~\ref{theorem.ergodic} and Theorem~\ref{theorem.pthmoment} we have a.s.
\[
\limsup_{n\to\infty} \frac{1}{n}\sum_{j=1}^n |x(j)|^p =\left\{
\begin{array}{cc}
+\infty, &  \text{if $p\geq \alpha$}, \\
\in (0,\infty), & \text{if $1\leq p<\alpha$.}
\end{array}
\right.
\]
\end{example}

\begin{example} \label{examp.normal2}
In Example~\ref{examp.normal}, we have that the density of the normal is given by 
\[
g(x)=\frac{1}{\sigma\sqrt{2\pi}}e^{-x^2/(2\sigma^2)}, \quad x\in \mathbb{R}.
\]
Take $\varphi(x)=e^{ax^2}$ for $a>0$. Then $\varphi'(x)=2ax\varphi(x)$ and $\varphi''(x)=2ax\varphi'(x)+2a\varphi(x)>0$. Hence 
$\varphi$ is increasing and convex. Moreover for any $\eta>0$ we have 
\[
\int_{\mathbb{R}} \varphi((1+\eta)|x|)dF(x)=  \frac{1}{\sigma\sqrt{2\pi}} \int_{\mathbb{R}} e^{x^2(a(1+\eta)-1/(2\sigma^2))}\,dx. 
\]
The integral is finite if $a(1+\eta)<1/(2\sigma^2)$ and infinite if $a(1+\eta)\geq 1/(2\sigma^2)$. Thus for $a<1/(2\sigma^2)$ we can choose $\eta>0$ sufficiently small such that $a(1+\eta)<1/(2\sigma^2)$, and so by the strong law
\[
\lim_{n\to\infty} \frac{1}{n}\sum_{j=1}^n \varphi((1+\eta)|H(j)|) \text{ is finite a.s.}
\]
Therefore
\[
\limsup_{n\to\infty} \frac{1}{n}\sum_{j=1}^n e^{ax(j)^2} \text{ is finite a.s. if $a<1/(2\sigma^2)$}.
\]
On the other hand, suppose that $a> 1/(2\sigma^2)$ is fixed and that there is an event $A$ of positive probability 
such that 
\[
A=\{\omega: \limsup_{n\to\infty} \frac{1}{n}\sum_{j=1}^n e^{ax(j)^2}(\omega) < +\infty\}.
\]
There is $\eta>0$ such that $b:=a/(1+\eta)>1/(2\sigma^2)$. Define $\phi(x)=e^{bx^2}$. Then for $\omega\in A$ we have 
$\limsup_{n\to\infty} n^{-1}\sum_{j=1}^n \phi((1+\eta)|x(j)|)(\omega)<+\infty$. This implies by part (ii) of 
Theorem~\ref{theorem.ergodic} that 
\[
\limsup_{n\to\infty} \frac{1}{n}\sum_{j=1}^n \phi(|H(j)|)(\omega)=\limsup_{n\to\infty} \frac{1}{n}\sum_{j=1}^n e^{bH(j)^2}(\omega)<+\infty.
\]
On the other hand, since $b>1/(2\sigma^2)$, we have that 
\[
\int_{\mathbb{R}} \phi(|x|)dF(x)=+\infty, 
\]
so it follows that 
\[
\lim_{n\to\infty} \frac{1}{n}\sum_{j=1}^n \phi(|H(j)|)=+\infty, \quad \text{a.s.}
\]
This contradicts the assumption that $A$ is an event of positive probability. Therefore, we must have $\mathbb{P}[A]=0$ and so 
\[
\limsup_{n\to\infty} \frac{1}{n}\sum_{j=1}^n e^{ax(j)^2} = +\infty, \quad\text{a.s. if $a>1/(2\sigma^2)$}.
\]
Summarising our conclusions we have that, a.s.
\[
\limsup_{n\to\infty} \frac{1}{n}\sum_{j=1}^n e^{ax(j)^2}
=\left\{
\begin{array}{cc}
+\infty, &  \text{if $a> 1/(2\sigma^2)$}, \\
\in (0,\infty), & \text{if $a<1/(2\sigma^2)$.}
\end{array}
\right.
\]
Our analysis is not sufficiently refined to conclude what the situation is if $a=1/(2\sigma^2)$.
\end{example}

\subsection{Further work}
Scrutiny of the proofs that follow shows that the analysis presented here for \eqref{eq.x} works with trivial modification for the 
continuous--time integral equation 
\begin{equation} \label{eq.xcns}
x(t)=H(t)+\int_0^t k(t-s)f(x(s))\,ds, \quad t\geq 0.
\end{equation}
It is still assumed that $f$ obeys \eqref{eq.f1} and \eqref{eq.f2}. We assume now that $k$ is in $L^1(0,\infty)$. For continuous solutions, we ask that $k$ and $H$ are continuous, and to guarantee uniqueness of a continuous solution of \eqref{eq.xcns}, we can assume 
that $f$ is locally Lipschitz continuous. Then direct analogues of all the main results apply. 

We have not focussed on nonconvolution equations, but it is easy to see that the proofs of all results (with the possible exception of 
Theorem~\ref{theorem.ergodic}) go through with cosmetic changes for the Volterra summation equation  
\begin{equation} \label{eq.xnoncon}
x(n+1)=H(n+1)+\sum_{j=0}^n k(n,j) f(x(j)),  \quad n\geq 0; \quad x(0)=\xi,
\end{equation}
where $k:\mathbb{Z}^+\times \mathbb{Z}^+\to \mathbb{R}$ is such that 
\[
\sup_{n\geq 0} \sum_{j=0}^n |k(n,j)|<+\infty,
\]
and $f$ once again obeys \eqref{eq.f1} and \eqref{eq.f2}. The corresponding nonconvolution integral equation
\[
x(t)=H(t)+\int_0^t k(t,s)f(x(s))\,ds, \quad t\geq 0,
\]
can also be analysed successfully, once $\sup_{t\geq 0} \int_0^t |k(t,s)|\,ds <+\infty$. 

We have remarked already that many interesting results of a similar character to those presented here can be obtained for 
the linear Volterra summation equation
\begin{equation} \label{eq.xlinear}
x(n+1)=H(n+1)+\sum_{j=0}^n k(n-j)x(j), \quad n\geq 0; \quad x(0)=\xi
\end{equation}
and the corresponding linear integral equation 
\[
x(t)=H(t)+\int_0^t k(t-s)x(s)\,ds, \quad t\geq 0.
\]
There are two chief differences in the nature of the results: first, the manner in which the kernel $k$ fades is important in the linear case, in stark contrast to the situation here. The reader will have seen throughout how small a role $k$ plays in the nature of the solution $x$, whose properties are inherited rather directly from $H$: there is no ``long memory'' or hysteresis effect present in \eqref{eq.x}, which can contrast markedly with the situation in \eqref{eq.xlinear} in the case when $k$ fades slowly. 
Second, the type of ``nice'' unbounded space we consider in the linear case tends to be slightly more restrictive than that we consider here, mainly because the Volterra term in \eqref{eq.xlinear} can be of the same order as $x(n+1)$ when the latter is large. On the other hand, the corresponding Volterra term in \eqref{eq.x} is of smaller order when $x(n+1)$ is large: this makes the analysis considerably easier, and therefore weaker hypotheses on the data suffice to make good progress in the sublinear case.       

Analysis of systems in $\mathbb{R}^p$ requires more thought. From the perspective of applications, it is of evident interest to study 
not only $\max_{0\leq j\leq n} \|x(j)\|$ (where $\|\cdot\|$ is a norm on $\mathbb{R}^p$), but also the running maximum of the $i$--th component of the system $\max_{0\leq j\leq n} |x_i(j)|$ ($i=1,\ldots, p$). Such an analysis likely requires a more 
delicate analysis of the maxima than the confines of this paper allow. 

\section{Proof of Theorem~\ref{prop.bounded}} 
The proof is elementary, but several useful estimates are developed which we employ in later proofs. Therefore, we 
give more intermediate details than are strictly necessary for present purposes. The proof takes its inspiration in 
part from \cite[Lemma 5.3]{jagydr06}. 
 
Notice that 
the hypotheses \eqref{eq.f1} and \eqref{eq.f2} on $f$ imply
\begin{equation} \label{eq.fast}
\text{For every $\epsilon>0$ there is $F(\epsilon)>0$ such that }
\left|f(x)\right|\leq F(\epsilon) + \epsilon|x|, \quad \forall x\in \mathbb{R}. 
\end{equation} 
Taking absolute values across \eqref{eq.x} and using the inequality \eqref{eq.fast} above, gives
\begin{align} 
\left|x(n+1)\right|&\leq\left|H(n+1)\right|+\sum^{n}_{l=0}\left|k(l)\right| F(\epsilon)+\epsilon \sum^{n}_{j=0}\left|k(n-j)
\right|\left|x(j)\right|\nonumber\\
& \leq \left|H(n+1)\right|+\left|k\right|_{1}F(\epsilon)+\epsilon\sum^{n}_{j=0}\left|k(n-j)\right|\left|x(j)\right|\nonumber\\
& \leq H^*(n+1)+\left|k\right|_1F(\epsilon)+\epsilon\sum^{n}_{j=0}\left|k(n-j)\right|x^*(n) \nonumber\\
\label{eq.xestsumprop1}
& \leq H^*(n+1)+\left|k\right|_1F(\epsilon)+\epsilon |k|_1 x^*(n). 
\end{align}
Next let  $N\geq0$ and take the maximum over $n=0$ to $N$ on both sides of \eqref{eq.xestsumprop1} to get 
\begin{align}
\max_{{1}\leq{j}\leq{N+1}}\left|x(j)\right|&\leq \max_{{0}\leq{n}\leq{N}} \left\{H^*(n+1)\right\}+\left|k\right|_1F(\epsilon)+\max_{{0}\leq{n}\leq{N}}\left\{\epsilon\left|k\right|_1x^*(n)\right\} \nonumber \\
&=H^*(N+1)+\left|k\right|_1F(\epsilon)+\epsilon\left|k\right|_1 \max_{{0}\leq{n}\leq{N}} x^*(n) \nonumber \\
\label{eq.xastprop1}
&=H^*(N+1)+\left|k\right|_1F(\epsilon)+\epsilon\left|k\right|_1 x^*(N).
\end{align}
Next observe that 
\begin{align*} x^*(N+1)&=\max\left\{\left|x(0)\right|, \max_{{1}\leq{j}\leq{N+1}}\left|x(j)\right|\right\}
\leq \left|x(0)\right|+\max_{{1}\leq{j}\leq{N+1}}\left|x(j)\right|.
\end{align*}
Since $x^*$ is a non-decreasing sequence, we have from the last inequality and \eqref{eq.xastprop1} that
\begin{align*} x^*(N+1)&\leq\left|x(0)\right|+\max_{{1}\leq{j}\leq{N+1}}\left|x(j)\right|\\
&\leq\left|k\right|_1F(\epsilon)+\left|x(0)\right|+H^*(N+1)+\epsilon\left|k\right|_1x^*(N)\\
&\leq\left|k\right|_1F(\epsilon)+\left|x(0)\right|+H^*(N+1)+\epsilon\left|k\right|_1x^*(N+1).
\end{align*}
Now, let $\epsilon>0$ be so small that $\epsilon\left|k\right|_1<1$. Then for all $N\geq 0$
\begin{equation*}(1-\epsilon\left|k\right|_1)x^*(N+1)\leq\left|x(0)\right|+\left|k\right|_1F(\epsilon)+H^*(N+1).
\end{equation*}
By construction, $1-\epsilon\left|k\right|_1>0$. Hence 
\begin{equation}  \label{eq.keyupperestimate}
0\leq x^*(n)\leq \frac{1}{1-\epsilon\left|k\right|_1}\biggl(\left|x(0)\right|+\left|k\right|_1F(\epsilon)+H^*(n)\biggr), \quad n\geq 1.
\end{equation}

Assume now that $H^*(n)\to H_\infty <+\infty$ as $n\to\infty$. Then
\begin{align*} 
\limsup_{n\to +\infty}x^*(n)&\leq\frac{1}{1-\epsilon\left|k\right|_1}\biggl(\left|x(0)\right|+\left|k\right|_1F(\epsilon)
+H_\infty\biggr)<+\infty.
\end{align*}
Therefore $x^*$ is a positive sequence which is bounded above. Moreover, it is non-decreasing and thus 
when $n\to\infty$, we have 
\[
x^*(n)\to x_\infty \leq \frac{1}{1-\epsilon\left|k\right|_1}\biggl(\left|x(0)\right|+\left|k\right|_1F(\epsilon)+H_\infty\biggr).
\]
Hence $x$ is bounded, which proves part (a). 

Now we turn to the proof of part (b) of the result, wherein we assume $H^*(n)\to\infty$ as $n\to\infty$. We suppose that $x^*(n)\to x_\infty<+\infty$ as $n\to\infty$ and see that this leads to a contradiction.
Define
\begin{equation} \label{eq.Sn}
S(n):=\sum^{n}_{j=0}k(n-j)f(x(j)).
\end{equation}
Proceeding as in the estimate of \eqref{eq.xestsumprop1} in part (a), we get
\begin{equation} \label{eq.Sest}
\left|S(n)\right|\leq\left|k\right|_1F(\epsilon)+\epsilon\left|k\right|_1x^*(n), \quad n\geq 0.
\end{equation}
Rearranging \eqref{eq.x} gives $H(n+1)=x(n+1)-S(n)$. 
Therefore by \eqref{eq.Sest} 
\begin{align*} \left|H(n+1)\right|
&\leq\left|x(n+1)\right|+\left|k\right|_1F(\epsilon)+\epsilon\left|k\right|_1x^*(n)\\
&\leq x^*(n+1)+\left|k\right|_1F(\epsilon)+\epsilon\left|k\right|_1x^*(n+1).
\end{align*}
Hence 
\begin{equation} \label{eq.keyestimate2}
\left|H(n)\right| \leq (1+\epsilon\left|k\right|_1)x^*(n)+\left|k\right|_1F(\epsilon), \quad n\geq 1.
\end{equation}
Moreover from this we get
\begin{equation} \label{eq.Hastupper}
H^\ast(n) \leq (1+\epsilon\left|k\right|_1)x^*(n)+\left|k\right|_1F(\epsilon), \quad n\geq 1.
\end{equation}
We have assumed that $x^*(n)\to x_\infty<+\infty$ as $n\to\infty$, and therefore
\begin{align*} 
\limsup_{n \to +\infty}\left|H(n)\right|&\leq1+\epsilon\left|k\right|_1 x_\infty+\left|k\right|_1F(\epsilon) <+\infty.
\end{align*}
This means that $\left|H\right|$ is bounded which in turn means that $H^*(n)\to H_\infty<+\infty$ as $n\to\infty$, which gives the desired contradiction, completing the proof of part (b). 

\section{Proof of Theorem~\ref{theorem.growthmaxima}} 
Suppose that $\epsilon\in (0,1)$ is so small that $\epsilon|k|_1<1$, then there is $F(\epsilon)>0$ such that $f$ obeys \eqref{eq.fast}. As in the proof of Theorem~\ref{prop.bounded}, we have the estimate \eqref{eq.keyupperestimate}.
Therefore, as $H^\ast(n)\to\infty$ as $n\to\infty$, we get 
\begin{equation*}
\limsup_{n \to +\infty}\frac{x^*(n)}{H^*(n)}\leq\frac{1}{1-\epsilon\left|k\right|_1}.
\end{equation*}
Letting $\epsilon\to0^+$ yields
\begin{equation} \label{eq.xHlimsup}
\limsup_{n \to +\infty}\frac{x^*(n)}{H^*(n)}\leq1.
\end{equation}

This gives the required upper estimate in part (i). To get the lower estimate (i.e., to obtain a lower bound on 
$\liminf_{n \to +\infty} x^*(n)/H^*(n)$), we start by recalling the estimate \eqref{eq.Hastupper}
which rearranges to give 
\begin{align*}
x^*(n)\geq 
\frac{1}{1+\epsilon\left|k\right|_1}H^\ast(n) - \frac{\left|k\right|_1F(\epsilon)}{1+\epsilon\left|k\right|_1}.
\end{align*}
Since $H^*(n)\to\infty$ as $n\to\infty$ by hypothesis, taking the limit inferior as $n\to\infty$ yields
\begin{equation*} 
\liminf_{n \to +\infty}\frac{x^*(n)}{H^*(n)}\geq\frac{1}{1+\epsilon\left|k\right|_1}, 
\end{equation*}
and now letting $\epsilon\to0^+$ yields 
\begin{equation*} 
\liminf_{n \to +\infty}\frac{x^*(n)}{H^*(n)}\geq1. 
\end{equation*}
Combining this with \eqref{eq.xHlimsup} gives part (i).  

We next prove part (iii) of the result. By definition of $x$ and $S$ in \eqref{eq.Sn}, if $t^x_n\geq1$, we have
$ x(t^x_n)=H(t^x_n)+S(t^x_n-1)$. 
Thus by \eqref{eq.Sest}, 
$\left|x(t^x_n)\right|\leq\left|H(t^x_n)\right|+F(\epsilon)\left|k\right|_1+\epsilon\left|k\right|_1x^*(t^x_n-1)$. Now, by the monotonicity of $x^\ast$ and the definition of $t_n^x$ we have 
\begin{align*} 
x^*(t^x_n-1)&\leq x^*(t^x_n) =\max_{{0}\leq{j}\leq{t^x_n}} \left|x(j)\right| =\left|x(t^x_n)\right|,
\end{align*}
since $t^x_n\leq n$ and $\left|x(t^x_n)\right|=\max_{{0}\leq{j}\leq{n}}\left|x(j)\right|$. Therefore,
\begin{equation*} 
\left|x(t^x_n)\right|\leq\left|H(t^x_n)\right|+F(\epsilon)\left|k\right|_1+\epsilon\left|k\right|_1|x(t^x_n)|. 
\end{equation*}
If $H^*(n)\to\infty$ as $n\to\infty$, we have $\max_{{0}\leq{j}\leq{n}}\left|x(j)\right|\to\infty$ as 
$n\to\infty$ and so $\left|x(t^x_n)\right|\to\infty$ as $n\to\infty$. The above inequality then implies 
that $\left|H(t^x_n)\right|\to\infty$ as $n\to\infty$. Rearranging and taking limits as before yields
\begin{equation}\label{eq.xtnxHtnx}
\limsup_{n \to +\infty}\frac{\left|x(t^x_n)\right|}{\left|H(t^x_n)\right|}\leq 1. 
\end{equation}
We now get a lower estimate for the limit. First, rearranging \eqref{eq.x} at the time $t_n^x$ and taking the  triangle 
inequality and the estimate \eqref{eq.fast} gives 
\begin{align*} \left|H(t^x_n)\right|& \leq \left|x(t^x_n)\right|+F(\epsilon)\left|k\right|_1+\epsilon\sum^{t^x_n-1}_{j=0}\left|k(t^x_n-1-j)\right||x(j)|.
\end{align*}
Now, for $j=0,...,t^x_n-1$,
\begin{align*} \left|x(j)\right|&\leq \max_{{0}\leq{j}\leq{t^x_n-1}}\left|x(j)\right|
\leq \max_{{0}\leq{j}\leq{t^x_n}}\left|x(j)\right| =\left|x(t^x_n)\right|.
\end{align*}
Hence
$\left|H(t^x_n)\right|\leq\left|x(t^x_n)\right|\left(1+\epsilon\left|k\right|_1\right)+F(\epsilon)\left|k\right|_1$.
Rearranging this inequality and taking limits gives
\begin{equation*} 
\liminf_{n \to +\infty}\frac{\left|x(t^x_n)\right|}{\left|H(t^x_n)\right|}
\geq\frac{1}{1+\epsilon\left|k\right|_1}.
\end{equation*}
Letting $\epsilon\to0^+$ and combining with \eqref{eq.xtnxHtnx} yields $|x(t^x_n)|/|H(t^x_n)|\to1$ as $n\to\infty$,
completing the proof of the first limit in part (iii).

We now prove the second limit in part (iii), namely $\lim_{n \to +\infty} \left|H(t^x_n)\right|/\left|H(t^H_n)\right|=1$. 
Notice that part (i) of this Theorem gives $x^*(n)/H^*(n)\to1$ as $n\to\infty$. By definition, 
$x^*(n)=\left|x(t^x_n)\right|$ and $H^*(n)=\left|H(t^H_n)\right|$. It has just been shown that 
$|x(t^x_n)|/|H(t^x_n)|\to1$ as $n\to\infty$. Therefore, as $n\to\infty$,
\begin{align*} \frac{\left|H(t^x_n)\right|}{\left|H(t^H_n)\right|}
&=\frac{\left|H(t^x_n)\right|}{\left|x(t^x_n)\right|}\cdot\frac{\left|x(t^x_n)\right|}{H^*(n)}
=\frac{\left|H(t^x_n)\right|}{\left|x(t^x_n)\right|}\cdot\frac{x^*(n)}{H^*(n)}\to 1.
\end{align*}
This proves the second limit in part (iii).

Finally, we prove part (ii). By assumption, $H^*(n)\to\infty$ as $n\to\infty$. Since $\left|H(t^H_n)\right|=\max_{{1}\leq{j}\leq{t^H_n}} \left|H(j)\right|=H^\ast(t_n^H)$, we have
\begin{equation} \label{eq.xtnh/htnh} 
\lim_{n \to +\infty}\frac{x^*(t^H_n)}{\left|H(t^H_n)\right|}
=\lim_{n\to\infty} \frac{x^\ast(t_n^H)}{H^\ast(t_n^H)} =1,
\end{equation} 
using part (i). Now, $|x(t^H_n)|\leq\max_{{0}\leq{j}\leq{t^H_n}} |x(j)|$, so we have
\begin{equation} \label{eq.xtnHHtnHsup} \limsup_{n \to +\infty}\frac{|x(t^H_n)|}{\left|H(t^H_n)\right|}\leq1. \end{equation}
From \eqref{eq.Sest} we get 
$
|S(t^H_n-1)|\leq|k|_1F(\epsilon)+\epsilon\left|k\right|_1x^*(t^H_n-1)
\leq \left|k\right|_1F(\epsilon)+\epsilon\left|k\right|_1x^*(t^H_n)$. 
Since $x(t^H_n)=H(t^H_n)+S(t^H_n-1)$, we have
\begin{align*} |H(t^H_n)|
&\leq |x(t^H_n)|+|S(t^H_n-1)|
\leq |x(t^H_n)|+\left|k\right|_1F(\epsilon)+\epsilon\left|k\right|_1x^*(t^H_n). 
\end{align*}
Therefore,
\begin{equation*} \frac{|x(t^H_n)|}{|H(t^H_n)|}
\geq 1-\frac{\left|k\right|_1F(\epsilon)}{|H(t^H_n)|}
-\frac{\epsilon\left|k\right|_1 x^*(t^H_n)}{|H(t^H_n)|}, 
\end{equation*}
and thus by \eqref{eq.xtnh/htnh}, we have $\liminf_{n \to +\infty} |x(t^H_n)|/|H(t^H_n)|\geq1-\epsilon\left|k\right|_1$. Letting $\epsilon\to 0^+$ yields $\liminf_{n \to +\infty}|x(t^H_n)|/|H(t^H_n)|\geq1$ and therefore combining this with \eqref{eq.xtnHHtnHsup} yields the desired, first limit  $\lim_{n \to +\infty} \left|x(t^H_n)\right|/\left|H(t^H_n)\right|=1$ in part (ii). 
We now prove the second limit in part (ii). We have
from part (i) that $x^*(n)/H^*(n)\to 1$ as $n\to\infty$,  
and by the first part of (ii) we get 
 $\lim_{n\to\infty} \left|H(t^H_n)\right|/\left|x(t^H_n)\right|=1$. Therefore,  
\begin{equation*}
\lim_{n\to\infty}\frac{\left|x(t^x_n)\right|}{\left|x(t^H_n)\right|}
=\lim_{n\to\infty}
\frac{x^*(n)}{H^*(n)}\cdot\frac{H^*(n)}{\left|x(t^H_n)\right|}
=\lim_{n\to\infty}\frac{x^*(n)}{H^*(n)} \cdot\frac{\left|H(t^H_n)\right|}{\left|x(t^H_n)\right|}
=1, \end{equation*}
as claimed. 

\section{Proof of Theorems~\ref{theorem.growth} and~\ref{theorem.growth2}}
\subsection{A preparatory lemma}
Before we prove our main result, we need the following preparatory lemma.
\begin{lemma}\label{preplemma}
Let $(a(n))_{n\geq0}$ be a sequence such that 
\[
 a^*(n):=\max_{{0}\leq{j}\leq{n}}\left|a(j)\right|\to\infty \text{ as $n\to\infty$}
\]
	and suppose	there is a sequence $(\tilde{a}(n))_{n\geq1}$ such that $(\tilde{a}(n))_{n\geq1}$ is increasing with $\tilde{a}(n)\to\infty$  as  $n\to\infty$ and 
	$a(n)\sim\tilde{a}(n)$ 	as $n\to\infty.$
Then 
\begin{equation*} 
\lim_{n\to\infty} \frac{a^*(n)}{\tilde{a}(n)}=1.
\end{equation*}
\end{lemma}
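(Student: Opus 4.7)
My plan is to exploit three facts in turn: the asymptotic relation $a(n)\sim \tilde{a}(n)$, the monotonicity of $\tilde{a}$, and the fact that $\tilde{a}(n)\to\infty$. The argument splits naturally into proving matching upper and lower bounds on $a^*(n)/\tilde{a}(n)$.

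First, I would observe that since $a(n)/\tilde{a}(n)\to 1$ and $\tilde{a}(n)$ is positive for all large $n$, the sequence $a(n)$ is itself eventually positive, so $|a(n)|=a(n)$ for all $n$ past some threshold, and $|a(n)|/\tilde{a}(n)\to 1$ as well. Fix $\varepsilon\in(0,1)$. Then there exists $N=N(\varepsilon)$ such that for all $n\geq N$,
\[
(1-\varepsilon)\tilde{a}(n)\leq |a(n)|\leq (1+\varepsilon)\tilde{a}(n).
\]

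For the upper bound, split the running maximum as
\[
a^*(n)=\max\Bigl(\max_{0\leq j<N}|a(j)|,\ \max_{N\leq j\leq n}|a(j)|\Bigr).
\]
The first quantity is a fixed constant $C(\varepsilon)$ independent of $n$. For the second, the inequality $|a(j)|\leq (1+\varepsilon)\tilde{a}(j)$ combined with the monotonicity of $\tilde{a}$ gives $\max_{N\leq j\leq n}|a(j)|\leq (1+\varepsilon)\tilde{a}(n)$. Since $\tilde{a}(n)\to\infty$, for all $n$ sufficiently large we have $(1+\varepsilon)\tilde{a}(n)>C(\varepsilon)$, hence $a^*(n)\leq (1+\varepsilon)\tilde{a}(n)$. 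This yields $\limsup_{n\to\infty}a^*(n)/\tilde{a}(n)\leq 1+\varepsilon$.

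For the lower bound, simply note that for $n\geq N$,
\[
a^*(n)\geq |a(n)|\geq (1-\varepsilon)\tilde{a}(n),
\]
so $\liminf_{n\to\infty}a^*(n)/\tilde{a}(n)\geq 1-\varepsilon$. Letting $\varepsilon\to 0^+$ in both bounds gives the desired limit. The only mildly subtle point is handling the initial segment $\{0,\ldots,N-1\}$ in the upper bound, but this is dispatched immediately using $\tilde{a}(n)\to\infty$; so I do not expect any real obstacle in this lemma.
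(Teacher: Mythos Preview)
Your proof is correct and follows essentially the same approach as the paper: fix $\varepsilon$, split the running maximum at the threshold index $N(\varepsilon)$, use monotonicity of $\tilde{a}$ for the upper bound, and compare $a^*(n)$ to $|a(n)|$ for the lower bound. Your version is in fact slightly more streamlined in two places: you keep the $\max$ rather than passing to a sum in the upper estimate, and for the lower estimate you use $a^*(n)\geq |a(n)|$ directly rather than going through $\max_{N\leq j\leq n}a(j)$; but these are cosmetic improvements, not a different argument.
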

\begin{proof}
Since $a(n)\sim\tilde{a}(n),$ for every $0<\epsilon<1,$ there exists $N(\epsilon)\in\mathbb{N}$ such that 
$0<1-\epsilon<a(n)/\tilde{a}(n)<1+\epsilon$ for all $n\geq N(\epsilon)$. 
Therefore, as $\tilde{a}(n)>0$ for all $n\geq N_1$ with $N_2(\epsilon)=\max(N_1,N(\epsilon)),$ we have that $a(n)>0$  for all $n\geq N_2(\epsilon)$.
Let $n\geq N_2(\epsilon)$. Then
\begin{align*} \max_{{0}\leq{j}\leq{n}}\left|a(j)\right|&=\max\left(\max_{{0}\leq{j}\leq{N_2(\epsilon)-1}}\left|a(j)\right|,\max_{{N_2(\epsilon)}\leq{j}\leq{n}}\left|a(j)\right|\right)\\
&\leq \max_{{0}\leq{j}\leq{N_2(\epsilon)-1}}\left|a(j)\right|+\max_{{N_2(\epsilon)}\leq{j}\leq{n}}\left|a(j)\right|\\
&=\max_{{0}\leq{j}\leq{N_2(\epsilon)-1}}\left|a(j)\right|+\max_{{N_2}\leq{j}\leq{n}}a(j)\\
&\leq \max_{{0}\leq{j}\leq{N_2-1}}\left|a(j)\right|+\max_{{N_2}\leq{j}\leq{n}}(1+\epsilon)\tilde{a}(j)\\
&= a^*(N_2-1)+(1+\epsilon)\tilde{a}(n),
\end{align*}
where we have used the fact that $a(j)>0$ for all $j\geq N_2$ to get the third line and the monotonicity of $\tilde{a}$ at the end. 
Hence, for all $n\geq N_2(\epsilon)$, we have
\begin{equation*} 
\frac{a^*(n)}{\tilde{a}(n)}\leq \frac{a^*(N_2-1)}{\tilde{a}(n)}+(1+\epsilon) 
\end{equation*} 
and thus $\limsup_{n\to\infty} a^*(n)/\tilde{a}(n)\leq 1+\epsilon$. 
Letting $\epsilon\to0^+$, gives
\begin{equation} \label{eq.anlimsup}
\limsup_{n\to\infty}\frac{a^*(n)}{\tilde{a}(n)}\leq 1. 
\end{equation}
To complete the proof, we need a corresponding lower bound. Now, for $j \geq N$ we have $a(j)>(1-\epsilon)\tilde{a}(j)$.
Hence for $n\geq N_2$, 
\begin{align*} 
\max_{{N_2}\leq{j}\leq{n}}a(j)&\geq (1-\epsilon) \max_{{N_2}\leq{j}\leq{n}}\tilde{a}(j)=(1-\epsilon)\tilde{a}(n). 
\end{align*}
Thus for  $n\geq N_2(\epsilon)$,
\begin{equation*} 
a^*(n)=\max\left(a^*(N_2-1),\max_{{N_2}\leq{j}\leq{n}}a(j)\right)
\geq \max\big(a^*(N_2(\epsilon)-1),(1-\epsilon)\tilde{a}(n)\big).
\end{equation*}
Since $\tilde{a}(n)\to\infty$ as $n\to\infty$, there exists $N_3(\epsilon)>0$ such that 
$\tilde{a}(n)>a^*(N_2-1)/(1-\epsilon)$ for  $n\geq N_3(\epsilon)$. 
Let $N_4=\max(N_2,N_3)$ and $n\geq N_4.$ Then $(1-\epsilon)\tilde{a}(n)>a^*(N_2(\epsilon)-1)$ and so,
$a^*(n)\geq \max\big(a^*(N_2-1),(1-\epsilon)\tilde{a}(n)\big) =(1-\epsilon)\tilde{a}(n)$. 
Letting $n\to\infty$, we have $\liminf_{n\to\infty}a^*(n)/\tilde{a}(n)\geq1-\epsilon$ 
and letting $\epsilon\to0^+$ gives
\begin{equation} \label{eq.anliminf}\liminf_{n\to\infty}\frac{a^*(n)}{\tilde{a}(n)}\geq1. \end{equation}
Combining \eqref{eq.anliminf} with \eqref{eq.anlimsup} we get
$a^*(n)/\tilde{a}(n)\to 1$ as $n\to\infty$ as required.
\end{proof}

\subsection{Proof of Theorem~\ref{theorem.growth}} 
We only prove part (i). Suppose statement (a) holds. Then the sequence $H(n)$ is asymptotic to another increasing sequence $\tilde{H}(n)$, where 
$\tilde{H}(n)\to\infty$ as $n\to\infty$ . Then by Lemma~\ref{preplemma}, 
$H^*(n)\sim\tilde{H}(n)\sim H(n)$ as $n\to\infty$. 
Since $H(n)\to\infty$, $H^\ast(n)\to\infty$ as $n\to\infty$, so we have $x^*(n)/H^*(n)\to1$ as $n\to\infty$ 
and so we have 
$x^*(n)/H(n)\to1$ as $n\to\infty$. 
Since $H(n)\to\infty$ as $n\to\infty$, we have from \eqref{eq.Sest} the limit
\[
\limsup_{n\to\infty} \frac{|S(n)|}{H(n)} \leq \epsilon\left|k\right|_1\limsup_{n\to\infty} \frac{x^*(n)}{H(n)}=\epsilon\left|k\right|_1.
\]
Letting $\epsilon\to 0^+$ gives $S(n)/H(n)\to 0$ as $n\to\infty$. 
Next since $\tilde{H}$ is increasing, we have
\begin{equation*} 
\limsup_{n\to\infty}\frac{H(n)}{H(n+1)}
=\limsup_{n\to\infty} \frac{H(n)}{\tilde{H}(n)}\cdot\frac{\tilde{H}(n)}{\tilde{H}(n+1)}\cdot \frac{\tilde{H}(n+1)}{H(n+1)}\leq 1.
\end{equation*} 
Therefore we have
\begin{equation*} 
\frac{|S(n-1)|}{H(n)}=\frac{|S(n-1)|}{H(n-1)}\cdot\frac{H(n-1)}{H(n)}\to0 \text{ as }n\to\infty.
\end{equation*}
Since $x(n)=H(n)+S(n-1)$, we get $x(n)/H(n)\to1$ as $n\to\infty$ as required. Moreover as $H$ is asymptotic to $\tilde{H}$ we have  
$x(n)/\tilde{H}(n)\to1$ as $n\to\infty$, so $x$ is asymptotic to the increasing sequence $\tilde{H}$. This proves statement (b).  

Conversely, suppose $x(n)\to\infty$ and $x$ is asymptotic to an increasing sequence $(\tilde{x}(n))_{n\geq1}$. 
Then by Lemma~\ref{preplemma}, it follows that $x^*(n)\sim\tilde{x}(n)$ and so, as $n\to\infty$, we also have $x^*(n)\sim x(n).$
Therefore, since $x^*(n)\to\infty$ as $n\to\infty$, we have $H^*(n)\to\infty$ as $n\to\infty$. 
Hence $x^*(n)/H^*(n)\to1$ as $n\to\infty$. Since $S$ obeys \eqref{eq.Sest}, we have $|S(n)|/x^*(n)\to0$ as $n\to\infty$. Hence, since $x^\ast(n)\leq x^\ast(n+1)$ we have 
\begin{equation*} \frac{\left|S(n-1)\right|}{x(n)}
=\frac{\left|S(n-1)\right|}{x^*(n-1)}\cdot\frac{x^*(n-1)}{x^*(n)}\to0\text{ as }n\to\infty.
\end{equation*}
But since $x(n)=H(n)+S(n-1)$, we have $H(n)/x(n)\to1$ as $n\to\infty$, which proves part of the desired conclusion. 
Recall that $x$ is asymptotic to the increasing sequence $\tilde{x}$, so $H$ is asymptotic to the increasing sequence $\tilde{x}$ which itself tends to infinity. Therefore $H(n)\to\infty$ as $n\to\infty$. Hence $H$ enjoys all the properties listed in statement (a).
\subsection{Proof of Theorem~\ref{theorem.growth2}}
By hypothesis $\lim_{n\to\infty} |H(n)/a(n)-(\Lambda_a H)(n)|=0$ and $\limsup_{n\to\infty} |(\Lambda_a H)(n)| \in (0,\infty)$. Since $a(n)\to\infty$ as $n\to\infty$ 
it follows that $H^\ast(n)\to\infty$ as $n\to\infty$. Hence by part (a) of Theorem~\ref{theorem.growthmaxima}, we have 
$\lim_{n\to\infty} x^\ast(n)/H^\ast(n)=1$. By part (a) of Lemma~\ref{H(n)/a(n)} $(H^\ast(n)/a(n))_{n\geq 1}$ is a bounded sequence. Hence $(x^\ast(n)/a(n))_{n\geq 1}$ is a bounded sequence. 
By \eqref{eq.Sest} we have for every $\epsilon>0$ that $\left|S(n)\right|\leq\left|k\right|_1F(\epsilon)+\epsilon\left|k\right|_1x^*(n)$.
Therefore 
\begin{equation} \label{eq.Saest}
\limsup_{n\to\infty} \frac{|S(n)|}{a(n)} \leq \epsilon|k|_1 \limsup_{n\to\infty} \frac{x^\ast(n)}{a(n)}.
\end{equation}
Since $\epsilon$ is arbitrary, we have $S(n)/a(n)\to 0$ as $n\to\infty$. Since $a$ is an increasing sequence $S(n)/a(n+1)\to 0$ 
as $n\to\infty$. By the identity
\[
\frac{x(n+1)}{a(n+1)}-(\Lambda_a H)(n+1)= 
\frac{H(n+1)}{a(n+1)}-(\Lambda_a H)(n+1) + \frac{S(n)}{a(n+1)}
\]
it is clear that $\lim_{n\to\infty} |x(n+1)/a(n+1)-(\Lambda_a H)(n+1)|=0$ 
so $x$ is in $B_a$ and we can take $\Lambda_a x=\Lambda_a H$, completing the proof.

Conversely, suppose that $x\in B_a$ and there exists a bounded sequence $\Lambda_a x$ such that 
$\lim_{n\to\infty} |x(n)/a(n) - (\Lambda_a x)(n)|=0$. We also have $\limsup_{n\to\infty} |(\Lambda_a x)(n)|\in (0,\infty)$. Then we have  $\limsup_{n\to\infty} |x(n)|/a(n)<+\infty$, and therefore by Lemma~\ref{H(n)/a(n)} we have that $\limsup_{n\to\infty} x^\ast(n)/a(n)<+\infty$. Then \eqref{eq.Saest} holds again for every $\epsilon>0$ and so $S(n)/a(n)\to 0$ as $n\to\infty$. Since $a$ is increasing  $S(n)/a(n+1)\to 0$ 
as $n\to\infty$. Finally, writing 
\[
\frac{H(n+1)}{a(n+1)}-(\Lambda_a x)(n+1) 
=
\frac{x(n+1)}{a(n+1)}-(\Lambda_a x)(n+1) - \frac{S(n)}{a(n+1)},
\]
it is clear that $\lim_{n\to\infty} |H(n+1)/a(n+1)-(\Lambda_a x)(n+1)|=0$ 
so $H$ is in $B_a$ and we can take $\Lambda_a H=\Lambda_a x$, completing the proof.

\section{Proof of Theorems~\ref{theorem.signedfluc} and~\ref{theorem.signedfluc2}}
\subsection{Proof of Theorem~\ref{theorem.signedfluc}}
From \eqref{eq.Sest} we have
\begin{align*}
S^*(n)&:=\max_{{0}\leq{j}\leq{n}}\left|S(j)\right|
 \leq \max_{{0}\leq{j}\leq{n}} \big\{F(\epsilon)\left|k\right|_1+\epsilon\left|k\right|_1x^*(j)\big\}
=F(\epsilon)\left|k\right|_1+\epsilon\left|k\right|_1 x^\ast(n).
\end{align*}
Therefore $S^\ast(n)/x^*(n)\to0$ as $n\to\infty$. 

Our next task is to deduce a lower estimate for $x^\ast_+$. 
For $n\geq0$, we get
\begin{align*} 
H^*_+(n+1)
&=\max_{0\leq j \leq n }H(j+1)\leq \max_{0\leq j\leq n}\left\{H(j+1)+S(j)+\left|S(j)\right|\right\}\\
&\leq \max_{0\leq j \leq n}\left\{H(j+1)+S(j)\right\}+\max_{{0}\leq{j}\leq{n}}\left|S(j)\right|\\
&=\max_{0\leq j\leq n}x(j+1)+\max_{0\leq j\leq n}\left|S(j)\right|\leq \max_{0\leq j\leq n+1}x(j)+S^*(n)\\
&\leq x^*_+(n+1)+S^*(n+1).
\end{align*}
Therefore by definition, 
\begin{equation} \label{eq.xpluslower}
x^*_+(n)\geq H^*_+(n)-S^*(n), \quad n\geq 1.
\end{equation}
We next obtain a lower estimate for $x^\ast_-$. Since $-x(n+1)+S(n)=-H(n+1)$, we have
\begin{align*}
H^*_-(n+1)
&=\max_{{0}\leq{j}\leq{n}}\big(-x(j+1)+S(j)\big)\leq \max_{{0}\leq{j}\leq{n}} \big(-x(j+1)+\left|S(j)\right|\big)\\
&\leq \max_{{0}\leq{j}\leq{n}} \big(-x(j+1)\big)+\max_{{0}\leq{j}\leq{n}}\left|S(j)\right|
=\max_{{1}\leq{l}\leq{n+1}} \big(-x(l)\big)+S^*(n)\\
&\leq \max_{{0}\leq{j}\leq{n+1}}\big(-x(j)\big)+S^*(n)\\
&\leq x^\ast_-(n+1)+S^*(n)\leq x^*_-(n+1)+S^*(n+1).
\end{align*}
Thus
\begin{equation} \label{eq.xminuslower}
x^\ast_-(n)\geq H^*_-(n)-S^*(n), \quad n\geq 1.
\end{equation}
We now prove part (i). By Theorem~\ref{theorem.growthmaxima}, $x^*(n)/H^*(n)\to 1$ as $n\to\infty$. 
Since $\lambda \in [0,1)$, 
$x^*(n)/H^*_+(n)\to1$ as $n\to\infty$. Clearly $x^*_+(n)\leq x^*(n)$. Therefore 
$\limsup_{n\to\infty} x^*_+(n)/H^*_+(n)\leq1$. 
We now get a lower estimate. Since $S^\ast(n)/x^\ast(n)\to 0$, $x^\ast(n)/H^\ast(n)\to 1$, $H^\ast_+(n)/H^\ast(n)\to 1$  
as $n\to\infty$, we have $S^*(n)/H^*_+(n)\to0$ as $n\to\infty$. Therefore, by 
\eqref{eq.xpluslower} we have the bound $\liminf_{n\to\infty} x^*_+(n)/H^*_+(n)\geq1$. This implies that $x^*_+(n)/H^*_+(n)\to 1$ as $n\to\infty$ as claimed. 
Therefore we have $\lim_{n\to \infty} x^*(n)/H^*_+(n)=1$. This completes the proof of part (i). The proof of part (ii) is symmetric and omitted.

We now prove part (iii). By Theorem~\ref{theorem.growthmaxima}, we have $x^*(n)/H^*(n)\to1$ as $n\to\infty$. Since $H^*_+(n)\sim H^*_-(n)$as $n\to\infty$, we have $H^*(n)\sim H^*_+(n)\sim H^*_-(n)$ as $n\to\infty$. 
Then we have $S^*(n)/H^*_-(n)\to 0$ as $n\to\infty$ because  $x^*(n)/H^*(n)\to 1$, $S^*(n)/x^*(n)\to 0$ and $H^*(n)/H^*_-(n)\to 1$ as $n\to\infty$.
Hence by \eqref{eq.xminuslower} we get
\begin{equation*} 
\liminf_{n\to\infty}\frac{x^*_-(n)}{H^*_-(n)}
\geq \liminf_{n\to\infty}\left\{\frac{H^*_-(n)}{H^*_-(n)}-\frac{S^*(n)}{H^*_-(n)}\right\}=1. 
\end{equation*}
On the other hand, as $x_-^\ast(n)\leq x^\ast(n)$ we get
\begin{align*} 
\limsup_{n\to\infty}\frac{x^*_-(n)}{H^*_-(n)}
&\leq\limsup_{n\to\infty}\left\{\frac{x^*(n)}{H^*(n)}\cdot\frac{H^*(n)}{H^*_-(n)}\right\}=1 
\end{align*}
and therefore $\lim_{n\to\infty} x^*_-(n)/H^*_-(n)=1$. 
We now deal with the asymptotic behaviour of $x^\ast_+$. As $n\to\infty$, we have 
\[
\frac{S^*(n)}{H^*_+(n)}=\frac{S^*(n)}{H^*_-(n)}\cdot\frac{H^*_-(n)}{H^*_+(n)}\to 0.
\]
Therefore from \eqref{eq.xpluslower} the inequality  
\[
\liminf_{n\to\infty}\frac{x^*_+(n)}{H^*_+(n)}\geq\liminf_{n\to\infty}\left\{1-\frac{S^*(n)}{H^*_+(n)}\right\}=1 
\]
results. Finally, since
\begin{equation*} 
\limsup_{n\to\infty}\frac{x^*_+(n)}{H^*_+(n)}\leq \limsup_{n\to\infty}\frac{x^*(n)}{H^*_+(n)}
=\limsup_{n\to\infty}\left\{\frac{x^*(n)}{H^*(n)}\cdot\frac{H^*(n)}{H^*_-(n)}\right\}
=1, \end{equation*}
we have $\lim_{n\to\infty} x^*_+(n)/H^*_+(n)=1$, 
completing the proof of part (iii).

\subsection{Proof of Theorem~\ref{theorem.signedfluc2}}
We start by deducing an auxiliary upper estimate for a quantity related to $x^\ast_-$. 
To do this we start by writing for $n\geq 0$
\begin{align*} 
\max_{1\leq l \leq n+1} (-x(l))
&=\max_{0\leq j\leq n} (-x(j+1))=\max_{0\leq j\leq n} \{-H(j+1)-S(j)\}\\
&\leq \max_{0\leq j\leq n} \{-H(j+1)+|S(j)|\} \leq \max_{0\leq j\leq n} (-H(j+1)) +\max_{0\leq j\leq n} |S(j)| \\
&= H_-^\ast(n+1) + S^\ast(n) \leq H_-^\ast(n+1) + S^\ast(n+1).
\end{align*}
Therefore 
\begin{equation} \label{eq.xminusupper}
\max_{1\leq l \leq n} (-x(l)) \leq H_-^\ast(n) + S^\ast(n), \quad n\geq 1.
\end{equation}
A corresponding auxiliary estimate for $x^\ast_+$ is also needed. By arguing as above we obtain 
\begin{equation} \label{eq.xplusupper}
 \max_{1\leq l \leq n}x(l) \leq H_+^\ast(n) + S^\ast(n), \quad n\geq 1.
\end{equation}

We are now in a position to prove part (i). Start by considering the case that $\lambda\in (0,1)$. Then $H^\ast(n)\sim H^\ast_+(n)\sim H^\ast_-(n)/\lambda$ as $n\to\infty$, and $H^\ast_-(n)\to\infty$ as $n\to\infty$. 
By part (i) of Theorem~\ref{theorem.signedfluc} we have $x_+^\ast(n)/H_+^\ast(n)\to 1$ as $n\to\infty$. Next as $x^\ast(n)/H^\ast(n)\to 1$ as $n\to\infty$ and $S^\ast(n)/x^\ast(n)\to 0$ as $n\to\infty$, we have as $n\to\infty$ that 
\[
\frac{S^\ast(n)}{H_-^\ast(n)}
=
\frac{S^\ast(n)}{x^\ast(n)}\cdot \frac{x^\ast(n)}{H^\ast(n)}\cdot\frac{H^\ast(n)}{H_-^\ast(n)}
\to 0\cdot 1 \cdot \frac{1}{\lambda}=0.
\]
Thus by \eqref{eq.xminuslower}  we have 
$\liminf_{n\to\infty} x^\ast_-(n)/H_-^\ast(n) \geq 1$, which also implies that $x^\ast_-(n)\to\infty$ as $n\to\infty$. Therefore there exists $N_1\geq 1$ such that 
$x^\ast_-(n)>-x(0)$ for all $n\geq N_1$. Hence by \eqref{eq.xminusupper}, we have for $n\geq N_1$  
\begin{align*}
x^\ast_-(n)&=\max( -x(0), \max_{1\leq l \leq n} (-x(l))) =\max_{1\leq l \leq n} (-x(l)) 
\leq H_-^\ast(n) + S^\ast(n).
\end{align*}
Since $S^\ast(n)/H_-^\ast(n)\to 0$ as $n\to\infty$ we get 
$\limsup_{n\to\infty} x^\ast_-(n)/H_-^\ast(n)\leq 1$. Therefore we have $\lim_{n\to\infty} x^\ast_-(n)/H_-^\ast(n)= 1$. This proves part (i) in the case when $\lambda\in (0,1)$. Part (iii) of Theorem~\ref{theorem.signedfluc} yields the result if $\lambda=1$. 

Next we consider the case where $\lambda\in (1,\infty)$. 
Then $H^\ast\sim H^\ast_-\sim \lambda H^\ast_+$, and $H^\ast_+(n)\to\infty$ as $n\to\infty$. 
By part (ii) of Theorem~\ref{theorem.signedfluc} we have $x_-^\ast(n)/H_-^\ast(n)\to 1$ as $n\to\infty$. 
Furthermore, as $x^\ast(n)/H^\ast(n)\to 1$ as $n\to\infty$ and $S^\ast(n)/x^\ast(n)\to 0$ as $n\to\infty$, we have as $n\to\infty$ that 
\[
\frac{S^\ast(n)}{H_+^\ast(n)}
=
\frac{S^\ast(n)}{x^\ast(n)}\cdot \frac{x^\ast(n)}{H^\ast(n)}\cdot\frac{H^\ast(n)}{H_+^\ast(n)}
\to 0\cdot 1 \cdot \frac{1}{\lambda}=0.
\]
Thus by \eqref{eq.xpluslower}  we have $\liminf_{n\to\infty} x^\ast_+(n)/H_+^\ast(n) \geq 1$, which also implies that $x^\ast_+(n)\to\infty$ as $n\to\infty$. Therefore there exists $N_1\geq 1$ such that 
$x^\ast_+(n)>x(0)$ for all $n\geq N_1$. Hence by \eqref{eq.xplusupper}, we have for $n\geq N_1$  
\begin{align*}
x^\ast_+(n)&=\max_{0\leq j\leq n} x(j) =\max( x(0), \max_{1\leq l \leq n} x(l)) 
=\max_{1\leq l \leq n} x(l) \leq H_+^\ast(n) + S^\ast(n).
\end{align*}
Since $S^\ast(n)/H_+^\ast(n)\to 0$ as $n\to\infty$ we get $\limsup_{n\to\infty} x^\ast_+(n)/H_+^\ast(n)\leq 1$. Hence we have $\lim_{n\to\infty} x^\ast_+(n)/H_+^\ast(n)= 1$. This proves part (i) in the case when $\lambda\in (1,\infty)$, and therefore completes the proof of part (i). 

To prove part (ii), when $\lambda=0$, note part (i) of Theorem~\ref{theorem.signedfluc} already gives $x_+^\ast(n)/H^\ast_+(n)\to 1$ as 
$n\to\infty$. Since $H^\ast\sim H^\ast_+$ as $n\to\infty$ and $S^\ast(n)/x^\ast(n)\to 0$ as $n\to\infty$, we have as $n\to\infty$ 
\[
\frac{S^\ast(n)}{H^\ast_+(n)}
=
\frac{S^\ast(n)}{x^\ast(n)}\cdot\frac{x^\ast(n)}{H^\ast(n)}\cdot\frac{H^\ast(n)}{H^\ast_+(n)}\to 0\cdot 1 \cdot 1=0.
\] 
From \eqref{eq.xminusupper} we have $-x(1)\leq \max_{1\leq l \leq n} (-x(l)) \leq H_-^\ast(n) + S^\ast(n)$ for $n\geq 1$. 
Therefore as $H^\ast_+(n)\to \infty$, $H^\ast_-(n)/H^\ast_+(n)\to 0$  and $S^\ast(n)/H^\ast_+(n)\to 0$ as $n\to\infty$, we have
\[
0\leq \liminf_{n\to\infty} \frac{\max_{1\leq l \leq n} (-x(l))}{H_+^\ast(n)} 
\leq \limsup_{n\to\infty}\frac{\max_{1\leq l \leq n} (-x(l))}{H_+^\ast(n)} \leq 0,
\]
so $\lim_{n\to\infty} \max_{1\leq l \leq n} (-x(l))/H_+^\ast(n)=0$. Since 
\[
x_-^\ast(n)=\max(-x(0),\max_{1\leq l \leq n} (-x(l))),
\] 
and $H^\ast_+(n)\to \infty$ as $n\to\infty$, the above limit yields 
$x_-^\ast(n)/H_+^\ast(n)\to 0$ as $n\to\infty$, as needed.

To prove part (iii), when $\lambda=\infty$, note that part (ii) of Theorem~\ref{theorem.signedfluc} yields  
$x_-^\ast(n)/H^\ast_-(n)\to 1$ as 
$n\to\infty$. Since $H^\ast(n)\sim H^\ast_-(n)$ as $n\to\infty$ and $S^\ast(n)/x^\ast(n)\to 0$ as $n\to\infty$, we have as $n\to\infty$ 
\[
\frac{S^\ast(n)}{H^\ast_-(n)}
=
\frac{S^\ast(n)}{x^\ast(n)}\cdot\frac{x^\ast(n)}{H^\ast(n)}\cdot\frac{H^\ast(n)}{H^\ast_-(n)}\to 0\cdot 1 \cdot 1=0.
\] 
From \eqref{eq.xplusupper} we have
\begin{equation*} 
x(1)\leq \max_{1\leq l \leq n} x(l) \leq H_+^\ast(n) + S^\ast(n), \quad n\geq 1.
\end{equation*}
Therefore as $H^\ast_-(n)\to \infty$, $H^\ast_+(n)/H^\ast_-(n)\to 0$  and $S^\ast(n)/H^\ast_-(n)\to 0$ as $n\to\infty$, we have
\[
0\leq \liminf_{n\to\infty} \frac{\max_{1\leq l \leq n} x(l)}{H_-^\ast(n)} 
\leq \limsup_{n\to\infty}\frac{\max_{1\leq l \leq n} x(l)}{H_-^\ast(n)} \leq 0,
\]
so $\lim_{n\to\infty} \max_{1\leq l \leq n} x(l)/H_-^\ast(n)=0$. Since $x_+^\ast(n)=\max(x(0),\max_{1\leq l \leq n} x(l))$, and $H^\ast_-(n)\to \infty$ as $n\to\infty$, the above limit yields 
$x_+^\ast(n)/H_-^\ast(n)\to 0$ as $n\to\infty$, as needed.

\subsection{Proof of Theorem~\ref{theorem.signedfluct}}
By \eqref{eq.fphi}, for every $\epsilon>0$ there is an $x_1(\epsilon)>0$ such that $|f(x)|\leq (1+\epsilon)\phi(|x|)$ for $|x|\geq x_1(\epsilon)$. By continuity of $f$, we have that $|f(x)|\leq \max_{|x|\leq x_1(\epsilon)}|f(x)|=:F_2(\epsilon)$. Hence we have the bound 
$|f(x)|\leq F_2(\epsilon)+(1+\epsilon)\phi(|x|)$ for all $x\in \mathbb{R}$. Therefore, with $S(n)$ defined as in \eqref{eq.Sn}, we obtain the bound
\[
|S(n)|\leq \sum_{j=0}^n |k(n-j)|\left\{F_2(\epsilon)+(1+\epsilon)\phi(|x(j)|)\right\}.
\]
Since $|x(j)|\leq x^\ast(n)$ for $j\in \{0,\ldots,n\}$ and $\phi$ is increasing (by \eqref{eq.fphi}), we have 
\[
|S(n)|\leq |k|_1 F_2(\epsilon) + (1+\epsilon)|k|_1 \phi(x^\ast(n)). 
\]
The monotonicity of $\phi$ and $x^\ast$ therefore imply that $S^\ast(n)\leq |k|_1 F_2(\epsilon) + (1+\epsilon)|k|_1 \phi(x^\ast(n))$, 
and therefore that 
\begin{equation} \label{eq.sastphibound}
\limsup_{n\to\infty} \frac{S^\ast(n)}{\phi(x^\ast(n)))}\leq |k|_1. 
\end{equation}
Since $\lambda=0$, we have that $x^\ast_+(n)\sim x^\ast(n)\sim H^\ast_+(n)$ as $n\to\infty$. We now show that 
\begin{equation} \label{eq.phipreserves}
\text{$a(n)\sim b(n)$ as $n\to\infty$ and $a(n)\to\infty$ implies $\phi(a(n))\sim \phi(b(n))$ as $n\to\infty$.}
\end{equation}
\eqref{eq.phipreserves} implies that $\phi(x^\ast(n))\sim \phi(H^\ast_+(n))$ and as $\phi$ is asymptotic to $f$, this implies 
from \eqref{eq.sastphibound} that 
\begin{equation} \label{eq.SastfHast}
\limsup_{n\to\infty} \frac{S^\ast(n)}{f(H^\ast_+(n)))}\leq |k|_1. 
\end{equation}
We prove next \eqref{eq.phipreserves}. By hypothesis, for every $\epsilon\in(0,1)$ we have that 
$(1-\epsilon)a(n)<b(n)<(1+\epsilon)a(n)$ for $n\geq N_1(\epsilon)$. Since $\phi$ is increasing we have for $n\geq N_1(\epsilon)$ that  
\[
 \frac{\phi((1-\epsilon)a(n))}{\phi(a(n))}<\frac{\phi(b(n))}{\phi(a(n))}<\frac{\phi((1+\epsilon)a(n))}{\phi(a(n))}.
\]
Next, as $x\mapsto \phi(x)/x$ is decreasing, we have that $\phi((1+\epsilon)x)/((1+\epsilon)x)\leq \phi(x)/x$ for $x>x_0$, or 
 $\phi((1+\epsilon)x)/\phi(x)\leq 1+\epsilon$ for $x>x_0$. Similarly, 
$\phi((1-\epsilon)x)/((1-\epsilon)x)\geq \phi(x)/x$ for $x>x_0/(1-\epsilon)$, or  $\phi((1-\epsilon)x)/\phi(x)\geq 1-\epsilon$. 
Now, since $a(n)\to\infty$ as $n\to\infty$ we have $a(n)>x_0/(1-\epsilon)$ for all $n\geq N_2(\epsilon)$. Let $N_3=\max(N_1,N_2)$. 
Then for $n\geq N_3(\epsilon)$ we have 
\[
1-\epsilon\leq 
 \frac{\phi((1-\epsilon)a(n))}{\phi(a(n))}<\frac{\phi(b(n))}{\phi(a(n))}<\frac{\phi((1+\epsilon)a(n))}{\phi(a(n))}
\leq 1+\epsilon.
\]
Since $\epsilon$ is arbitrary, we have $\phi(b(n))/\phi(a(n))\to 1$ as $n\to\infty$. This proves \eqref{eq.phipreserves}.

We prove now part (ii). By hypothesis we have that $H^\ast_-(n)/f(H^\ast_+(n))\to \infty$ as $n\to\infty$. Then by 
 \eqref{eq.SastfHast} we have 
\[
0\leq 
\limsup_{n\to\infty} \frac{S^\ast(n)}{H^\ast_-(n)}  
= \limsup_{n\to\infty} \frac{S^\ast(n)}{f(H^\ast_+(n)))}\cdot \frac{f(H^\ast_+(n))}{H^\ast_-(n)}=0.  
\]
We have already shown that 
\begin{equation} \label{eq.xminest}
H^\ast_-(n)\leq x_-^\ast(n)+S^\ast(n),\quad \max_{1\leq j\leq n} -x(j)\leq  H_-^\ast(n)+S^\ast(n), \quad n\geq 1.
\end{equation}
From the first inequality in \eqref{eq.xminest}, we have $\liminf_{n\to\infty} x^\ast_-(n)/H^\ast_-(n)\geq 1$, and so $x^\ast_-(n)\to\infty$ as $n\to\infty$. 
Therefore, there is $N_4>1$ such that $x^\ast_-(n)=\max_{1\leq j\leq n} -x(j)$ for $n\geq N_4$. Hence, by \eqref{eq.xminest} we have 
for $n\geq N_4$ the inequality $x_-^\ast(n)\leq  H_-^\ast(n)+S^\ast(n)$, and so $\limsup_{n\to\infty} x^\ast_-(n)/H^\ast_-(n)\leq 1$. 
Hence we have $x^\ast_-(n)/H^\ast_-(n)\to 1$ as $n\to\infty$, as claimed in part (ii). 

For part (iii), we have that $H_-^\ast(n)/f(H_+^\ast(n))\to \lambda_2\in (0,\infty)$ as $n\to\infty$. Hence, by hypothesis, 
\eqref{eq.SastfHast} and from the second inequality in 
\eqref{eq.xminest} we have 
\begin{align*}
\limsup_{n\to\infty}
\frac{x^\ast_-(n)}{f(H^\ast_+(n))}
&\leq
\limsup_{n\to\infty}
\frac{\max_{1\leq j\leq n} -x(j)}{f(H^\ast_+(n))}\\
&\leq  
\limsup_{n\to\infty} \frac{H_-^\ast(n)}{f(H^\ast_+(n))}+ \limsup_{n\to\infty}\frac{S^\ast(n)}{f(H^\ast_+(n))}
\leq \lambda_2+\sum_{j=0}^{\infty} |k(j)|.
\end{align*}
From the definition of $\lambda_2$ the claim follows. If $\lambda_2>\sum_{j=0}^\infty |k(j)|$, by hypothesis, \eqref{eq.SastfHast} and from the second inequality in 
\eqref{eq.xminest} we have 
\begin{align*}
\liminf_{n\to\infty}
\frac{\max_{1\leq j\leq n} -x(j)}{f(H^\ast_+(n))}
&\geq  
\liminf_{n\to\infty} \left\{\frac{H_-^\ast(n)}{f(H^\ast_+(n))}- \frac{S^\ast(n)}{f(H^\ast_+(n))}\right\}\\
&
=\lambda_2+\liminf_{n\to\infty} - \frac{S^\ast(n)}{f(H^\ast_+(n))}\geq \lambda_2-\sum_{j=0}^{\infty} |k(j)|>0.
\end{align*}
Therefore $\max_{1\leq j\leq n} -x(j)\to\infty$ as $n\to\infty$, and so $x_-(n)=\max_{1\leq j\leq n} -x(j)$ for all $n$ sufficiently 
large. The above inequality, this fact, and the definition of $\lambda_2\in (0,\infty)$ proves the last claim in (iii). The proof of part (iv) follows as the proof of the first part of (iii) above.  

\section{Proof of Theorem~\ref{theorem.limsup}}
\subsection{Preliminary result}
We start by proving a preliminary result.
\begin{lemma} \label{H(n)/a(n)}
If $\big(a(n)\big)_{n\geq 1}$ is an increasing sequence with $a(n)\to\infty$ as $n\to\infty$, then
\begin{itemize}
\item[(a)]
\begin{equation*}
\limsup_{n\to\infty}\frac{\left|H(n)\right|}{a(n)}=1 \text{ is equivalent to }
\limsup_{n\to\infty}\frac{H^\ast(n)}{a(n)}=1.
\end{equation*}
\item[(b)]
\[
\limsup_{n\to\infty} \frac{|H(n)|}{a(n)}=+\infty \text{ is equivalent to }
\limsup_{n\to\infty} \frac{H^\ast(n)}{a(n)}=+\infty.
\]
\item[(c)] 
\[
\limsup_{n\to\infty} \frac{|H(n)|}{a(n)}=0 \text{ is equivalent to }
\limsup_{n\to\infty} \frac{H^\ast(n)}{a(n)}=0.
\]
\end{itemize}
\end{lemma}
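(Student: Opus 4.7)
The plan is to deduce all three parts of the lemma from the following single unified equivalence: for every $\rho \in [0,\infty]$,
\[
\limsup_{n\to\infty} \frac{|H(n)|}{a(n)} = \rho
\quad \Longleftrightarrow \quad
\limsup_{n\to\infty} \frac{H^\ast(n)}{a(n)} = \rho,
\]
so that (a), (b), (c) correspond to specialising $\rho$ to $1$, $+\infty$, $0$ respectively. The easy inequality $H^\ast(n) \geq |H(n)|$ immediately gives
\[
\limsup_{n\to\infty} \frac{H^\ast(n)}{a(n)} \geq \limsup_{n\to\infty} \frac{|H(n)|}{a(n)},
\]
which handles one direction in each of the three parts, and also automatically delivers the implication ``$|H|/a$ unbounded $\Rightarrow$ $H^\ast/a$ unbounded'' needed in (b).

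For the reverse inequality, the plan is to treat the case $L := \limsup_{n\to\infty} |H(n)|/a(n) \in [0,\infty)$ uniformly, using the mechanism already deployed in Lemma~\ref{preplemma}. Given $\epsilon > 0$, there exists $N(\epsilon) \geq 1$ such that $|H(n)| \leq (L + \epsilon)\, a(n)$ for all $n \geq N(\epsilon)$. Splitting the running maximum,
\[
H^\ast(n) = \max\!\left(\max_{1 \leq j \leq N(\epsilon)-1} |H(j)|,\ \max_{N(\epsilon) \leq j \leq n} |H(j)|\right),
\]
and invoking the monotonicity of $a$ on the right-most block, I obtain
\[
H^\ast(n) \leq H^\ast(N(\epsilon)-1) + (L+\epsilon)\, a(n), \quad n \geq N(\epsilon).
\]
Dividing by $a(n)$ and using $a(n) \to \infty$ as $n \to \infty$ yields $\limsup_{n\to\infty} H^\ast(n)/a(n) \leq L + \epsilon$; sending $\epsilon \to 0^+$ gives the matching upper bound. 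Specialising this to $L = 1$ proves (a) and to $L = 0$ proves (c).

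For part (b), I handle the remaining direction ``$\limsup H^\ast(n)/a(n) = +\infty \Rightarrow \limsup |H(n)|/a(n) = +\infty$'' by contradiction: if $L$ were finite, the estimate from the previous paragraph would force $\limsup H^\ast(n)/a(n) \leq L < +\infty$, contradicting the hypothesis. Combined with the trivial ``$\Rightarrow$'' already noted from $H^\ast \geq |H|$, this closes (b).

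I do not expect any serious obstacle. The one point deserving care is the splitting at $N(\epsilon)$: the monotonicity of $a$ is used precisely to collapse $\max_{N(\epsilon) \leq j \leq n} a(j)$ to $a(n)$, and the assumption $a(n) \to \infty$ is used to absorb the leftover constant term $H^\ast(N(\epsilon)-1)$ when we divide by $a(n)$. These are exactly the hypotheses furnished, so the argument proceeds cleanly without any technical surprises.
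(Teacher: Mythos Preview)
Your argument is correct and, in fact, slightly cleaner than the paper's. You effectively prove the single identity
\[
\limsup_{n\to\infty}\frac{H^\ast(n)}{a(n)}=\limsup_{n\to\infty}\frac{|H(n)|}{a(n)}
\]
as elements of $[0,\infty]$, by combining the trivial lower bound with your $\epsilon$-splitting upper bound in the finite case, and all three parts fall out at once. The paper uses the same $\epsilon$-splitting for the forward direction of (a) and for (c), and the same contradiction for (b), but for the \emph{reverse} direction of (a) it takes a more circuitous route: it sets $\lambda=\limsup_{n\to\infty}|H(n)|/a(n)\in[0,1]$, rules out $\lambda\in(0,1)$ by rescaling to $a_\lambda=\lambda a$ and re-applying the forward direction, and rules out $\lambda=0$ by a separate argument. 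Your approach avoids this case analysis entirely because you have already shown the two limsups coincide whenever the first is finite.

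One small expository point: the sentence ``which handles one direction in each of the three parts'' is not quite accurate for (a), since $\limsup H^\ast(n)/a(n)\geq \limsup |H(n)|/a(n)$ alone gives neither implication there. What actually closes (a) in your scheme is the combination of the two inequalities yielding equality; you might make that explicit when you write it up.
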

\begin{proof}
We start with the proof of part (a), and begin by proving that the first statement implies the second. By hypothesis $\left|H(n)\right|<(1+\epsilon)a(n)$ for all $n\geq N_1(\epsilon)$.
Let $n\geq N_1+1$. Then
\begin{align*}
\max_{1\leq j\leq n }\left|H(j)\right|&=\max\bigg(\max_{ 1 \leq j \leq N_1}\left|H(j)\right|,\max_{N_1+1 \leq j \leq n}\left|H(j)\right|\bigg)\\
&\leq \max\bigg(\max_{1\leq j \leq N_1 }\left|H(j)\right|,\max_{ N_1+1 \leq j \leq n }(1+\epsilon)a(j)\bigg)\\
&=\max\bigg(\max_{1\leq j \leq N_1 }\left|H(j)\right|,(1+\epsilon)a(n)\bigg).
\end{align*}
Thus $\limsup_{n\to\infty}\max_{1 \leq j \leq n } |H(j)|/a(n)\leq 
1+\epsilon$. Letting $\epsilon\to 0$ gives
\begin{equation} \label{eq.lem1}\limsup_{n\to\infty}\frac{\max_{1\leq j \leq n}\left|H(j)\right|}{a(n)}\leq 1. \end{equation}
Since $\max_{1\leq j\leq n }\left|H(j)\right|\geq \left|H(n)\right|$, we have that 
\begin{equation} \label{eq.lem2} 
\limsup_{n\to\infty}\frac{\max_{1 \leq j \leq n}\left|H(j)\right|}{a(n)}\geq \limsup_{n\to\infty}\frac{\left|H(n)\right|}{a(n)}=1
\end{equation}
by hypothesis. Combining \eqref{eq.lem1} and \eqref{eq.lem2} proves the first implication in part (a). 

To prove the reverse implication in part (a), we begin with the assumption that
\begin{equation} \label{eq.lem3}
\limsup_{n\to\infty}\frac{\max_{1\leq j\leq n }\left|H(j)\right|}{a(n)}=1. \end{equation}
Again, we have that
$\max_{1\leq j \leq n }\left|H(j)\right|\geq \left|H(n)\right|$ and so
\begin{equation*}
\limsup_{n\to\infty}\frac{\left|H(n)\right|}{a(n)}\leq \limsup_{n\to\infty}\frac{H^*(n)}{a(n)}=1. 
\end{equation*}
Therefore, there exists $\lambda\in [0,1]$ such that $\limsup_{n\to\infty} |H(n)|/a(n)=\lambda$.
Now suppose $\lambda\in (0,1)$ and define $a_\lambda(n):=\lambda a(n)$. Then
$\limsup_{n\to\infty}\left|H(n)\right|/a_\lambda(n)=1$. Applying the implication from the first part of the lemma and noting that $a_\lambda(n)$ is increasing with  $a_\lambda(n)\to\infty$ as $n\to\infty$ gives
$\limsup_{n\to\infty} H^\ast(n)/a_\lambda(n)=1$ so 
$\limsup_{n\to\infty} H^\ast(n)/a(n)=\lambda$. However, since we have assumed $\lambda\in (0,1)$, this contradicts \eqref{eq.lem3} and hence we can have $\lambda=0$ or $\lambda=1$. Ruling out the case $\lambda=0$ proves the statement.

If $\lambda=0$, then for every $\epsilon\in(0,1)$ there is $N_1(\epsilon)$ such that $\left|H(n)\right|<\epsilon a(n)$ for all $n\geq N_1(\epsilon)$. Following the same argument as in the first part of the proof, this gives 
\begin{equation*}
 \max_{1\leq j \leq n}\left|H(j)\right|\leq\max\left(\epsilon a(n),\max_{1 \leq j \leq N_1 }\left|H(j)\right|\right),\quad n\geq N_1+1. 
\end{equation*}
Thus 
letting $n\to\infty$ now yields $\limsup_{n\to\infty} H^\ast(n)/a(n)\leq \epsilon$ and letting $\epsilon\to0^+$ gives
$\limsup_{n\to\infty} H^\ast(n)/a(n)=0$. This again results in a contradiction, since by assumption 
$\limsup_{n\to\infty} H^\ast(n)/a(n)=1$. 
Hence $\lambda=1$, concluding the proof of part (a).

To prove the forward implication in (b), note that $|H(n)|\leq \max_{1\leq j\leq n}|H(j)|$. To prove the reverse implication, suppose otherwise, i.e., that
\[
\lambda:=\limsup_{n\to\infty} |H(n)|/a(n) \in [0,\infty).
\] 
Then there is $N>0$ such that for all $n\geq N$,  $|H(n)|\leq (\lambda+1)a(n)$. Hence for $n\geq N$ we may
use the monotonicity of $a$ to get 
\begin{align*}
H^\ast(n)&=\max\left(\max_{1\leq j\leq N-1} |H(j)|,\max_{N\leq j \leq n} |H(j)|\right) 
\leq \max_{1\leq j\leq N-1} |H(j)| + \max_{N\leq j \leq n} |H(j)| \\
&=\max_{1\leq j\leq N-1} |H(j)| + (\lambda+1) a(n).
\end{align*}
Since $a(n)\to\infty$ as $n\to\infty$, we have 
$+\infty=\limsup_{n\to\infty} H^\ast(n)/a(n)\leq \lambda+1<+\infty$, a contradiction. 

To prove part (c), note that because $|H(n)|\leq H^\ast(n)$, the second statement in (c) proves the first. Suppose the first statement is true. Then for every $\epsilon>0$ there is an $N(\epsilon)>0$ such that $|H(n)|<\epsilon a(n)$ 
for all $n\geq N(\epsilon)$. Thus for $n\geq N(\epsilon)$ we may use the monotonicity of $a$ to get 
\begin{align*}
H^\ast(n)&=\max\left(\max_{1\leq j\leq N(\epsilon)-1} |H(j)|,\max_{N(\epsilon)\leq j \leq n} |H(j)|\right)\\
&\leq \max_{1\leq j\leq N(\epsilon)-1} |H(j)| + \max_{N(\epsilon)\leq j \leq n} |H(j)| \\
&=\max_{1\leq j\leq N(\epsilon)-1} |H(j)| + \epsilon a(n).
\end{align*}
Since $a(n)\to\infty$ as $n\to\infty$, we have 
$\limsup_{n\to\infty} H^\ast(n)/a(n)\leq \epsilon$, and letting $\epsilon\to 0$ completes the proof.
\end{proof}

\subsection{Proof of Theorem~\ref{theorem.limsup}}
We start with the proof that statement (a) implies statement (b) implies statement (c) implies statement (d) implies statement (a) in the case that $\rho\in (0,\infty)$. Define $a_\rho(n)=\rho a(n)$. Then statement (a) implies
$\limsup_{n\to\infty} |H(n)|/a_\rho(n)=1$. 
By part (a) of Lemma~\ref{H(n)/a(n)}, it follows that $\limsup_{n\to\infty} H^\ast(n)/a_\rho(n)=1$. 
This implies statement (b). Since $x^\ast(n)/H^\ast(n)\to 1$ as $n\to\infty$, we have 
$\limsup_{n\to\infty} x^\ast(n)/a_\rho(n)=1$. This is statement (d). From part (a) of Lemma~\ref{H(n)/a(n)} it follows that this is equivalent to $\limsup_{n\to\infty} \left|x(n)\right|/a_\rho(n)=1$ which is equivalent to statement (c). 
Since $x^\ast(n)/H^\ast(n)\to 1$ as $n\to\infty$, from statement (d) we have 
$\limsup_{n\to\infty} H^\ast(n)/a_\rho(n)=1$.
By Lemma~\ref{H(n)/a(n)} this is equivalent to 
$\limsup_{n\to\infty} |H(n)|/a_\rho(n)=1$, which is precisely statement (a).

Suppose now $\rho=0$. Again we show that (a) implies (b) implies (c) implies (d) implies (a). By part (c) of Lemma~\ref{H(n)/a(n)}, 
(a) implies $\max_{1\leq j \leq n}\left|H(j)\right|/a(n)\to 0$ as $n\to\infty$, which is part (b). Since $x^\ast(n)/H^\ast(n)\to 1$ 
as $n\to\infty$. Hence 
$x^\ast(n)/a(n)\to 0$ as $n\to\infty$, which is statement (d). By part (c) of Lemma~\ref{H(n)/a(n)}, this is equivalent to (c). From statement (d) and $x^\ast(n)/H^\ast(n)\to 1$ as $n\to\infty$, we have $\limsup_{n\to\infty} H^\ast(n)/a(n)=0$. This is equivalent to statement (a) by  part (c) of Lemma~\ref{H(n)/a(n)}.

Suppose lastly that $\rho=\infty$. Statement (a) implies statement (b) from part (b) of Lemma~\ref{H(n)/a(n)}. 
Since $x^\ast(n)/H^\ast(n)\to 1$ as $n\to\infty$, we have 
$\limsup_{n\to\infty} x^\ast(n)/a(n)=+\infty$. By Lemma~\ref{H(n)/a(n)} part (b), we have $\limsup_{n\to\infty} |x(n)|/a(n)=+\infty$, 
which is (c). Applying Lemma~\ref{H(n)/a(n)} part (b) again gives statement (d). Finally, if (d) holds, since $x^\ast(n)/H^\ast(n)\to 1$ 
as $n\to\infty$ 
we have that 
\[
\limsup_{n\to\infty} \frac{H^\ast(n)}{a(n)}=\limsup_{n\to\infty} \frac{H^\ast(n)}{x^\ast(n)}\cdot \frac{x^\ast(n)}{a(n)}=+\infty.
\]
By part (b) of Lemma~\ref{H(n)/a(n)}, this is equivalent to statement (a), as required.

\subsection{Proof of Theorem~\ref{theorem.limsupsignedfluc2}}
The proof of Theorem~\ref{theorem.limsupsignedfluc2} requires a result almost parallel to Lemma~\ref{H(n)/a(n)}. 
We start by proving this auxiliary result.
\begin{lemma} \label{H(n)/a(n)2}
If $\big(a(n)\big)_{n\geq 1}$ is an increasing sequence with $a(n)\to\infty$ as $n\to\infty$, then
\begin{itemize}
\item[(a)]
\begin{equation*}
\limsup_{n\to\infty}\frac{H(n)}{a(n)}=1 \text{ is equivalent to }
\limsup_{n\to\infty}\frac{\max_{ 1 \leq j \leq n } H(j)}{a(n)}=1.
\end{equation*}
\item[(b)] 
\[
\limsup_{n\to\infty} \frac{H(n)}{a(n)}=0 \text{ is equivalent to }
\limsup_{n\to\infty} \frac{\max_{1\leq j\leq n} H(j)}{a(n)}=0.
\]
\end{itemize}
\end{lemma}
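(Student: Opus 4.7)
The plan is to mirror the proof of Lemma~\ref{H(n)/a(n)} line-for-line, substituting the signed quantity $H$ for $|H|$ and the signed running maximum $\max_{1\leq j\leq n} H(j)$ for $H^\ast(n)$; the monotonicity of $a$ and the splitting of the running maximum at a finite threshold remain the only tools needed in the forward implications. For the forward direction of (a), I fix $\epsilon\in(0,1)$ and choose $N_1(\epsilon)$ with $H(n)<(1+\epsilon)a(n)$ for $n\geq N_1$; splitting the running maximum at $N_1$ and using the monotonicity of $a$ gives
\[
\max_{1\leq j\leq n} H(j)\leq \max\Bigl(\max_{1\leq j\leq N_1}H(j),\,(1+\epsilon)a(n)\Bigr),
\]
so that dividing through by $a(n)\to\infty$ and letting $\epsilon\to 0^+$ yields $\limsup_{n\to\infty}\max_{1\leq j\leq n}H(j)/a(n)\leq 1$. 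The matching lower bound is the trivial inequality $\max_{1\leq j\leq n} H(j)\geq H(n)$ combined with the hypothesis $\limsup H(n)/a(n)=1$.

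For the converse of (a), the trivial bound $\max_{1\leq j\leq n} H(j)\geq H(n)$ gives $\lambda:=\limsup_{n\to\infty} H(n)/a(n)\leq 1$. I rule out $\lambda<1$ by a case split. If $\lambda\in(0,1)$, I rescale by setting $a_\lambda(n):=\lambda a(n)$ (still positive, increasing, and tending to infinity) and apply the forward direction already proved to the pair $(H,a_\lambda)$, obtaining $\limsup_{n\to\infty}\max H(j)/a(n)=\lambda<1$, contradicting the hypothesis. If $\lambda\leq 0$, then for every $\epsilon>0$ we have $H(n)<\epsilon a(n)$ eventually, and the splitting argument from the forward direction shows $\limsup_{n\to\infty}\max H(j)/a(n)\leq\epsilon$, hence $\leq 0$, again contradicting the hypothesis that this limsup equals $1$.

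Part (b) follows the same template: the forward direction uses the splitting bound with $\epsilon$ in place of $1+\epsilon$ to obtain $\limsup_{n\to\infty}\max H(j)/a(n)\leq 0$, and the matching lower bound is produced by extracting from $\limsup H/a=0$ a subsequence $n_k$ with $H(n_k)/a(n_k)\to 0$ and applying $\max_{1\leq j\leq n_k} H(j)\geq H(n_k)$. The main obstacle is the reverse direction of (b): the trivial bound yields only $\limsup H(n)/a(n)\leq 0$, and to upgrade this to equality I would evaluate along a subsequence $\{n_k\}$ for which the running maximum is actually realised at $n_k$, i.e., $\max_{1\leq j\leq n_k} H(j)=H(n_k)$, so that $H(n_k)/a(n_k)$ inherits the limsup $0$ from $\max H(j)/a(n)$. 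Such a subsequence exists as soon as $\sup_n H(n)=+\infty$; in the complementary case where the sup of $H$ is attained at some finite index, $\max_{1\leq j\leq n} H(j)$ is eventually constant and the hypothesis $\limsup\max H(j)/a(n)=0$ forces this constant to be $\geq 0$, from which $\limsup H(n)/a(n)=0$ follows by a short direct argument handling the two subcases $\sup H>0$ and $\sup H\leq 0$ separately.
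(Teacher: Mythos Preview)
Your treatment of part~(a) and of the forward implication in~(b) mirrors the paper's argument almost exactly; the only cosmetic difference is that the paper first argues $\lambda\ge0$ (from $\max_{j\le n}H(j)\to+\infty$) and then disposes of $\lambda\in(0,1)$ and $\lambda=0$, whereas you fold the cases $\lambda\in(0,1)$ and $\lambda\le0$ into a single split. Both routes are fine.

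The reverse implication in~(b) is where you have noticed something the paper glosses over: the paper simply writes ``because $H(n)\le H_+^\ast(n)$, the second statement in~(b) proves the first,'' but this yields only $\limsup_{n\to\infty} H(n)/a(n)\le0$, not equality. Your repair in the case $\sup_n H(n)=+\infty$ is correct---along the record times $n_k$ one has $H(n_k)=\max_{j\le n_k}H(j)>0$ eventually, and these ratios are squeezed to $0$, so $\limsup H(n)/a(n)\ge0$. However, the complementary case cannot be repaired. When $\sup_n H(n)<+\infty$, the running maximum converges to a finite constant $M$, and $M/a(n)\to0$ \emph{regardless of the sign of $M$}; the hypothesis $\limsup_{n\to\infty}\max_{j\le n}H(j)/a(n)=0$ is therefore automatic and certainly does not force $M\ge0$. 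Your promised ``short direct argument'' does not exist, because the implication is in fact false: take $a(n)=n$, $H(1)=1$, and $H(n)=-n$ for $n\ge2$; then $\max_{j\le n}H(j)\equiv 1$ so the second statement of~(b) holds, yet $\limsup_{n\to\infty} H(n)/a(n)=-1\ne0$. Thus part~(b) of the lemma, as stated, is not an equivalence, and neither your argument nor the paper's one-line justification can close the gap. (In the paper's sole downstream use of this direction, the missing lower bound $\limsup(-x(n))/a_+(n)\ge0$ can be recovered directly from the hypotheses of Theorem~\ref{theorem.limsupsignedfluc2} rather than from the lemma, so the final conclusions survive.)
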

\begin{proof}
We start with the proof of part (a), and prove the left to right implication first. By hypothesis $H(n)<(1+\epsilon)a(n)$ for all $n\geq N_1(\epsilon)$.
Let $n\geq N_1+1$. Then
\begin{align*}
\max_{1\leq j \leq n} H(j)
&=\max\bigg(\max_{1\leq j\leq N_1} H(j),\max_{N_1+1\leq j\leq n} H(j)\bigg)\\
&\leq \max\bigg(\max_{1\leq j\leq N_1}H(j),\max_{N_1+1\leq j \leq n}(1+\epsilon)a(j)\bigg)\\
&=\max\bigg(\max_{1\leq j\leq N_1} H(j),(1+\epsilon)a(n)\bigg).
\end{align*}
Thus $\limsup_{n\to\infty}\max_{ 1\leq j\leq n} H(j)/a(n)\leq 1+\epsilon$, and letting $\epsilon\to 0$ gives
\begin{equation} \label{eq.lem1+}
\limsup_{n\to\infty}\frac{\max_{1\leq j \leq n} H(j)}{a(n)}\leq 1. 
\end{equation}
Since $\max_{1\leq j\leq n} H(j)\geq H(n)$, we have that 
\begin{equation} \label{eq.lem2+} 
\limsup_{n\to\infty}\frac{\max_{1\leq j\leq n} H(j)}{a(n)}\geq \limsup_{n\to\infty}\frac{H(n)}{a(n)}=1
\end{equation}
by hypothesis. Combining \eqref{eq.lem1+} and \eqref{eq.lem2+} proves the forward implication. 

To prove the reverse implication in part (a), we begin with the assumption that
\begin{equation} \label{eq.lem3+}
\limsup_{n\to\infty}\frac{\max_{1 \leq j \leq n} H(j)}{a(n)}=1. 
\end{equation}
Again, we have that $\max_{1\leq j \leq n} H(j)\geq H(n)$ and so
\[
\lambda:=
\limsup_{n\to\infty}\frac{H(n)}{a(n)}\leq \limsup_{n\to\infty}\frac{H^*_+(n)}{a(n)}=1. 
\]
Also, by \eqref{eq.lem3+}, there is an integer sequence $\tau_n\uparrow\infty$ such that $\max_{1 \leq j \leq \tau_n} H(j)>(1-\epsilon)a(\tau_n)\to\infty$ as $n\to\infty$. Thus $\max_{1 \leq j \leq n} H(j)\to\infty$, and so $\limsup_{n\to\infty} H(n)=+\infty$. 
Hence $\lambda\geq 0$. Therefore, $\lambda\in [0,1]$. Now suppose $\lambda\in (0,1)$ and define $a_\lambda(n):=\lambda a(n)$. Then we have 
$\limsup_{n\to\infty} H(n)/a_\lambda(n)=1$.
Applying the implication from the first part of the lemma and noting that $a_\lambda(n)$ is increasing with  $a_\lambda(n)\to\infty$ as $n\to\infty$, we arrive at the limit 
$\limsup_{n\to\infty} \max_{1\leq j\leq n} H(j)/a_\lambda(n)=1$. Therefore we have 
$\limsup_{n\to\infty} \max_{1\leq j \leq n} H(j)/a(n)=\lambda$. 
However, since we have assumed $\lambda\in (0,1)$, this contradicts \eqref{eq.lem3+} and hence we can have $\lambda=0$ or $\lambda=1$. Ruling out the case $\lambda=0$ proves the statement.

If $\lambda=0$, then for every $\epsilon\in(0,1)$ there is $N_1(\epsilon)$ such that $H(n)<\epsilon a(n)$ for all 
$n\geq N_1(\epsilon)$. Following the same argument as in the first part of the proof, this gives 
\begin{equation*}
 \max_{1\leq j\leq n} H(j)\leq\max\left(\epsilon a(n),\max_{1\leq j\leq N_1} H(j)\right),\qquad n\geq N_1+1. 
\end{equation*}
Thus letting $n\to\infty$ now yields
$\limsup_{n\to\infty} \max_{1\leq j\leq n} H(j)/a(n)\leq \epsilon$ and letting $\epsilon\to0^+$ gives
$\limsup_{n\to\infty} \max_{1\leq j\leq n} H(j)/a(n)\leq 0$. 
On the other hand, $\max_{1\leq j\leq n} H(j)\geq H(1)$ so 
$\limsup_{n\to\infty} \max_{1\leq j\leq n} H(j)/a(n)\geq 0$. Therefore, we have  
 $\limsup_{n\to\infty} \max_{1\leq j\leq n} H(j)/a(n) =0$. This is again a contradiction since by assumption 
$\limsup_{n\to\infty} \max_{1\leq j \leq n} H(j)/a(n)=1$. Therefore $\lambda=1$ which concludes the proof of part (a).

To prove part (b), note that because $H(n)\leq H_+^\ast(n)$, the second statement in (b) proves the first. Suppose the first is true. Then $H(n)<\epsilon a(n)$ 
for all $n\geq N(\epsilon)$. Thus for $n\geq N(\epsilon)$ we may use the monotonicity of $a$ to get 
\begin{align*}
H^\ast_+(n)&=\max\left(\max_{1\leq j\leq N(\epsilon)-1} H(j),\max_{N(\epsilon)\leq j \leq n} H(j)\right) 
\leq \max\left(\max_{1\leq j\leq N(\epsilon)-1} H(j),\epsilon a(n)\right) 
\end{align*}
Since $a(n)\to\infty$ as $n\to\infty$, we have $\limsup_{n\to\infty} H^\ast_+(n)/a(n)\leq \epsilon$, and letting $\epsilon\to 0$ completes the proof.
\end{proof}

\subsection{Proof of Theorem~\ref{theorem.limsupsignedfluc2}}
Suppose $\lambda\in (0,\infty)$. Therefore, with $a(n)=a_+(n)$
\[
\limsup_{n\to\infty} \frac{|H(n)|}{a(n)}=\limsup_{n\to\infty} \frac{\max(H(n),-H(n))}{a_+(n)}=\max(\rho_+,\rho_-\lambda).
\]
Therefore by Theorem~\ref{theorem.limsup}
\[
\limsup_{n\to\infty} \frac{|x(n)|}{a(n)}=\max(\rho_+,\rho_-\lambda), 
\quad
\limsup_{n\to\infty} \frac{x^\ast(n)}{a(n)}=\max(\rho_+,\rho_-\lambda).
\]
Moreover, by Lemma~\ref{H(n)/a(n)2}
\[
\limsup_{n\to\infty}\frac{H_+^\ast(n)}{a_+(n)}=:\rho_+\in (0,\infty],
\]
so
\[
\limsup_{n\to\infty}\frac{H_+^\ast(n)}{a(n)}=:\rho_+\in (0,\infty],
\]
As usual, we have for every $\epsilon>0$ that 
$S^\ast(n)\leq |k|_1 F(\epsilon)+\epsilon |k|_1 x^\ast(n)$ for $n\geq 1$. 
Therefore, we have that $S^\ast(n)/a(n)\to 0$ as $n\to\infty$, so $S^\ast(n)/a_+(n)\to 0$ as $n\to\infty$ and as $\lambda\in (0,\infty)$, $S^\ast(n)/a_-(n)\to 0$ as $n\to\infty$.
Since $x_+^\ast(n)\geq H_+^\ast(n)-S^\ast(n)$ and $\max_{1\leq j\leq n} x(j) \leq H_+^\ast(n)+S^\ast(n)$, we have 
\[
\limsup_{n\to\infty} \frac{x_+^\ast(n)}{a_+(n)}\geq \limsup_{n\to\infty} \left\{\frac{H_+^\ast(n)}{a_+(n)}-\frac{S^\ast(n)}{a_+(n)}\right\}
=\limsup_{n\to\infty} \frac{H_+^\ast(n)}{a_+(n)}=\rho_+.
\]
and
\[
\limsup_{n\to\infty} \frac{\max_{1\leq j\leq n} x(j)}{a_+(n)}
\leq \limsup_{n\to\infty} \frac{H_+^\ast(n)}{a_+(n)}+ \limsup_{n\to\infty} \frac{S^\ast(n)}{a_+(n)}=\rho_+.
\]
Since $x_+^\ast(n)=\max(x(0),\max_{1\leq j\leq n} x(j))$, and $\limsup_{n\to\infty} x^\ast_+(n)=\infty$ (so $x^\ast_+(n)\to\infty$ as $n\to\infty$ by monotonicity), we have that $x_+^\ast(n)=\max_{1\leq j\leq n} x(j))$ for all $n$ sufficiently large. Hence
$\limsup_{n\to\infty} \max_{1\leq j\leq n} x(j)/a_+(n)=\rho_+$.
By Lemma~\ref{H(n)/a(n)2}, this gives 
$\limsup_{n\to\infty} x(n)/a_+(n)=\rho_+$ as required.

On the other hand, since 
$x_-^\ast(n)\geq H_-^\ast(n)-S^\ast(n)$ and $\max_{1\leq j\leq n} (-x(j)) \leq H_-^\ast(n)+S^\ast(n)$,
\[
\limsup_{n\to\infty} \frac{x_-^\ast(n)}{a_-(n)}\geq \limsup_{n\to\infty} 
\left\{\frac{H_-^\ast(n)}{a_-(n)}-\frac{S^\ast(n)}{a_-(n)}\right\}
=\limsup_{n\to\infty} \frac{H_-^\ast(n)}{a_-(n)}=\rho_-.
\]
and
\[
\limsup_{n\to\infty} \frac{\max_{1\leq j\leq n} (-x(j))}{a_-(n)}
\leq \limsup_{n\to\infty} \frac{H_-^\ast(n)}{a_-(n)}+ \limsup_{n\to\infty} \frac{S^\ast(n)}{a_-(n)}=\rho_-.
\]
Since $x_-^\ast(n)=\max(-x(0),\max_{1\leq j\leq n} (-x(j)))$, and $\limsup_{n\to\infty} x^\ast_-(n)=\infty$ 
(so $x^\ast_-(n)\to\infty$ as $n\to\infty$ by monotonicity), we have that $x_-^\ast(n)=\max_{1\leq j\leq n} (-x(j)))$ for all $n$ sufficiently large. Hence $\limsup_{n\to\infty} \max_{1\leq j\leq n} -x(j)/a_-(n)=\rho_-$. By Lemma~\ref{H(n)/a(n)2}, this also gives $\limsup_{n\to\infty} -x(n)/a_+(n)=\rho_+$, as required.

Suppose $\lambda=0$. Therefore, with $a(n)=a_+(n)$
\[
\limsup_{n\to\infty} \frac{|H(n)|}{a(n)}=\limsup_{n\to\infty} \frac{\max(H(n),-H(n))}{a_+(n)}=\rho_+.
\]
Thus by Theorem~\ref{theorem.limsup}
$\limsup_{n\to\infty} |x(n)|/a(n)=\rho_+ $ and $\limsup_{n\to\infty} x^\ast(n)/a(n)=\rho_+$.
Moreover, by Lemma~\ref{H(n)/a(n)2}
\[
\limsup_{n\to\infty}\frac{H_+^\ast(n)}{a_+(n)}=:\rho_+\in (0,\infty],
\]
so
\[
\limsup_{n\to\infty}\frac{H_+^\ast(n)}{a(n)}=:\rho_+\in (0,\infty],
\]
Since for every $\epsilon>0$ that $S^\ast(n)\leq |k|_1 F(\epsilon)+\epsilon |k|_1 x^\ast(n)$ for $n\geq 1$, it follows 
that $S^\ast(n)/a(n)\to 0$ as $n\to\infty$, and so $S^\ast(n)/a_+(n)\to 0$ as $n\to\infty$.
Since $x_+^\ast(n)\geq H_+^\ast(n)-S^\ast(n)$ and $\max_{1\leq j\leq n} x(j) \leq H_+^\ast(n)+S^\ast(n)$, we have 
\[
\limsup_{n\to\infty} \frac{x_+^\ast(n)}{a_+(n)}\geq \limsup_{n\to\infty} \left\{\frac{H_+^\ast(n)}{a_+(n)}-\frac{S^\ast(n)}{a_+(n)}\right\}
=\limsup_{n\to\infty} \frac{H_+^\ast(n)}{a_+(n)}=\rho_+.
\]
and
\[
\limsup_{n\to\infty} \frac{\max_{1\leq j\leq n} x(j)}{a_+(n)}
\leq \limsup_{n\to\infty} \frac{H_+^\ast(n)}{a_+(n)}+ \limsup_{n\to\infty} \frac{S^\ast(n)}{a_+(n)}=\rho_+.
\]
Since $x_+^\ast(n)=\max(x(0),\max_{1\leq j\leq n} x(j))$, and $\limsup_{n\to\infty} x^\ast_+(n)=\infty$ (so $x^\ast_+(n)\to\infty$ as $n\to\infty$ by monotonicity), we have that $x_+^\ast(n)=\max_{1\leq j\leq n} x(j)$ for all $n$ sufficiently large. Hence
$\limsup_{n\to\infty} \max_{1\leq j\leq n} x(j)/a_+(n)=\rho_+$. By Lemma~\ref{H(n)/a(n)2}, this gives 
$\limsup_{n\to\infty} x(n)/a_+(n)=\rho_+$, as required.

On the other hand, because $H^\ast_-(n)/a_+(n)\to 0$ as $n\to\infty$, and 
$x_-^\ast(n)\geq H_-^\ast(n)-S^\ast(n)$ and $\max_{1\leq j\leq n} (-x(j)) \leq H_-^\ast(n)+S^\ast(n)$, we have 
\[
\limsup_{n\to\infty} \frac{\max_{1\leq j\leq n} (-x(j))}{a_+(n)}
\leq \limsup_{n\to\infty} \frac{H_-^\ast(n)}{a_+(n)}+ \limsup_{n\to\infty} \frac{S^\ast(n)}{a_+(n)}=0.
\]
On the other hand for $n\geq 1$, we have $\max_{1\leq j\leq n} (-x(j))\geq -x(1)$, so 
\[
\limsup_{n\to\infty} \frac{\max_{1\leq j\leq n} (-x(j))}{a_+(n)}\geq 0.
\] 
Hence $\lim_{n\to\infty} \max_{1\leq j\leq n} (-x(j))/a_+(n)=0$. Since 
\[
x_-^\ast(n)=\max(-x(0),\max_{1\leq j\leq n} (-x(j))),
\] 
we either have that $x^\ast_-(n)$ tends to a finite limit and $\limsup_{n\to\infty} x_-^\ast(n)/a_-(n)=0$,
or $x^\ast_-(n)\to\infty$ as $n\to\infty$, in which case 
$x_-^\ast(n)=\max_{1\leq j\leq n} (-x(j)))$ for all $n$ sufficiently large, and once again we have 
$\limsup_{n\to\infty} x_-^\ast(n)/a_-(n)=0$, as we need. By Lemma~\ref{H(n)/a(n)2}, this also gives 
$\limsup_{n\to\infty} -x(n)/a_+(n)=0$, as required.

\section{Proof of Theorem~\ref{theorem.ergodic}}
We have that $\left|f(x)\right|\leq F(\epsilon)+\epsilon|x|$ for all $x\in\mathbb{R}$. Choose $\epsilon\in(0,1)$ such that $2\epsilon\left|k\right|_1<1$ and $\epsilon=\epsilon(\eta)$ is given by $2\epsilon\left|k\right|_1=1-\frac{1}{1+\eta}$, or equivalently
\begin{equation} \label{eq.eta}
1+\eta=\frac{1}{1-2\epsilon\left|k\right|_1}.
\end{equation}
Since $\sum_{j=0}^{\infty}\left|k(j)\right|>0$ there exists $N\in \mathbb{N}$ such that
\begin{equation} \label{eq.sumkj}
\sum_{j=0}^{n}\left|k(j)\right|\geq \frac{1}{2}\sum_{j=0}^{\infty}\left|k(j)\right|,\quad n\geq N_1. 
\end{equation}
For fixed $n\geq N_1$, define $k_n:[0,...,n)\in [0,\infty)$ by 
\begin{equation} \label{eq.knj}
k_n(j)=\frac{\left|k(j)\right|}{\sum_{l=0}^{n}\left|k(l)\right|}, \quad j=0,...,n. \end{equation}
Then for all $j\in{0,...n}$, $\sum_{j=0}^{n}k_n(j)=1$ and $k_n(j)\geq 0$. 
Hence for any sequence $x$ we have
\begin{align*} \sum_{j=0}^{n}\left|k(j)\right|\left|x(n-j)\right|
&=\sum_{j=0}^{n}k_n(j) \cdot \left|x(n-j)\right| \cdot \sum_{l=0}^{n}\left|k(l)\right|\\
&\leq  |k|_1 \sum_{j=0}^{n}k_n(j) \left|x(n-j)\right|.
\end{align*}
Let $n\geq N_1$, then using this estimate, \eqref{eq.knj} and \eqref{eq.x} we have
\begin{align} 
\left|x(n+1)\right|
&\leq \left|H(n+1)\right|+F(\epsilon)\left|k\right|_1+\epsilon\left|k\right|_1 \sum_{j=0}^{n} k_n(j)\left|x(n-j)\right|. \label{eq.xn}
\end{align}
Define \begin{equation} \label{eq.t1t2}
t_1=1-2\epsilon\left|k\right|_1, \quad t_2=\epsilon\left|k\right|_1 \end{equation}
which gives $t_1=1/(1+\eta)$. Then $t_1,t_2\in (0,1)$ and $1-t_1-t_2=\epsilon\left|k\right|_1$. Now, because $k_n(j)\geq 0$ and $\sum_{j=0}^{n} k_n(j)=1$, we have, by Jensen's Inequality (Lemma~\ref{lemma.Jensen}) that
\begin{equation} \label{eq.phisum}
\varphi\bigg(\sum_{j=0}^{n}k_n(j)\left|x(n-j)\right|\bigg)\leq \sum_{j=0}^{n}k_n(j)\varphi\left(\left|x(n-j)\right|\right). 
\end{equation}
Since $\varphi$ is an increasing function we have, from \eqref{eq.xn} and \eqref{eq.t1t2}, that
\begin{align*} 
\lefteqn{\varphi\left(\left|x(n+1)\right|\right)}\\
&\leq \varphi\bigg(t_1(1+\eta) \left|H(n+1)\right|+t_2\left(\frac{F(\epsilon)}{\epsilon}\right)+(1-t_1-t_2) \sum_{j=0}^{n}\big(k_n(j)\left|x(n-j)\right|\big)\bigg)\\
&=:a_n,
\end{align*}
where we used \eqref{eq.eta} at the penultimate step. 
By the convexity of $\varphi$, and applying Jensen's Inequality (Lemma~\ref{lemma.Jensen}) twice, we have the following:
\begin{align*}
a_n&=\varphi\bigg(t_1(1+\eta) \left|H(n+1)\right|+t_2\left(\frac{F(\epsilon)}{\epsilon}\right)+(1-t_1-t_2) 
\sum_{j=0}^{n}\big(k_n(j)\left|x(n-j)\right|\big)\bigg)\\
&\leq t_1\varphi\big((1+\eta)\left|H(n+1)\right|\big)+t_2\varphi\left(\frac{F(\epsilon)}{\epsilon}\right) 
\\&\qquad+(1-t_1-t_2)\varphi\left(\sum_{j=0}^{n}k_n(j)\left|x(n-j)\right|\right)\\
&=\frac{1}{1+\eta}\varphi\big((1+\eta)\left|H(n+1)\right|\big) + \epsilon\left|k\right|_1\varphi\left(\frac{F(\epsilon)}{\epsilon}\right)\\
&\qquad+\epsilon\left|k\right|_1 \sum_{j=0}^{n}k_n(j)\varphi\left(\left|x(n-j)\right|\right)
\end{align*}
by \eqref{eq.phisum}. Hence, for $n\geq N_1$
\begin{align*}
\lefteqn{
\varphi\left(\left|x(n+1)\right|\right)}
\\&\leq \frac{1}{1+\eta}\varphi\big((1+\eta)\left|H(n+1)\right|\big) + \epsilon\left|k\right|_1\varphi\left(\frac{F(\epsilon)}{\epsilon}\right)+\epsilon\left|k\right|_1 \sum_{j=0}^{n}k_n(j)\varphi\left(\left|x(n-j)\right|\right)\\
&=\frac{1}{1+\eta}\varphi\big((1+\eta)\left|H(n+1)\right|\big) 
+ \epsilon\left|k\right|_1\varphi\left(\frac{F(\epsilon)}{\epsilon}\right)\\
&\qquad \qquad\qquad\qquad
+\epsilon\left|k\right|_1 \sum_{j=0}^{n} \frac{\left|k(j)\right|}{\sum_{l=0}^{n}\left|k(l)\right|} \varphi\left(\left|x(n-j)\right|\right).
\end{align*}
If $n\geq N_1$, then by \eqref{eq.sumkj} we have $\sum_{j=0}^{n}\left|k(l)\right|\geq \frac{1}{2}\left|k\right|_1$. Thus
\begin{multline*}
\varphi\left(\left|x(n+1)\right|\right)\leq \frac{\varphi\big((1+\eta)\left|H(n+1)\right|\big)}{1+\eta} + \epsilon\left|k\right|_1\varphi\left(\frac{F(\epsilon)}{\epsilon}\right) \\
+2\epsilon \sum_{j=0}^{n} \left|k(j)\right|\varphi\left(\left|x(n-j)\right|\right).
\end{multline*}
Now define $\varphi_x^*:=\max_{{0}\leq{j}\leq{N_1+1}} \varphi\left(\left|x(j)\right|\right)$. 
Then 
\begin{multline*} 
\varphi\left(\left|x(n+1)\right|\right)\leq \varphi_x^*+C(\eta)+\frac{1}{1+\eta} \varphi\big((1+\eta)\left|H(n+1)\right|\big) \\
+2\epsilon(\eta) \sum_{j=0}^{n} \left|k(n-j)\right|\varphi\left(\left|x(j)\right|\right), 
\end{multline*}
where
\begin{equation*} 
C(\eta)=\epsilon(\eta)\left|k\right|_1\varphi\left(\frac{F(\epsilon(\eta))}{\epsilon(\eta)}\right)<+\infty. \end{equation*}
For $0\leq n\leq N_1$,
\begin{equation*} \varphi\left(\left|x(n+1)\right|\right)\leq \max_{{0}\leq{j}\leq{N_1}}\varphi\left(\left|x(j+1)\right|\right) \leq \max_{{0}\leq{j}\leq{N_1}} \varphi\left(\left|x(j)\right|\right) =\varphi_x^*. \end{equation*}
Thus for every $n\geq 0$,
\begin{multline*}
\varphi\left(\left|x(n+1)\right|\right)\leq \varphi_x^*+C(\eta)+ \frac{1}{1+\eta} \varphi\big((1+\eta)\left|H(n+1)\right|\big) 
\\+2\epsilon(\eta) \sum_{j=0}^{n} \left|k(n-j)\right|\varphi\left(\left|x(j)\right|\right),
\end{multline*}
and hence for all $N\geq 0$
\begin{multline*}
\sum_{n=0}^{N} \varphi\left(\left|x(n+1)\right|\right)
\leq \big(\varphi_x^*+C(\eta)\big)(N+1)+\frac{1}{1+\eta}\sum_{n=0}^{N} \varphi\big((1+\eta)\left|H(n+1)\right|
\\+2\epsilon(\eta) \sum_{n=0}^{N}\sum_{j=0}^{n} \left|k(n-j)\right|\varphi\left(\left|x(j)\right|\right).
\end{multline*}
Now, keeping in mind that $0\leq j\leq n\leq N$,
\begin{align*}
\sum_{n=0}^{N}\sum_{j=0}^{n} \left|k(n-j)\right|\varphi\left(\left|x(j)\right|\right)&=\sum_{j=0}^{N}\bigg(\sum_{n=j}^{N} \left|k(n-j)\right|\bigg)\varphi\left(\left|x(j)\right|\right)
\leq \left|k\right|_1 \sum_{j=0}^{N} \varphi\left(\left|x(j)\right|\right).
\end{align*}
Thus for $n\geq 0$ we have
\begin{multline*}
\sum_{j=0}^{n} \varphi\left(\left|x(j+1)\right|\right)\leq \big(\varphi_x^*+C(\eta)\big)(n+1)
+\frac{1}{1+\eta}\sum_{j=0}^{n} \varphi\big((1+\eta)\left|H(j+1)\right|\big)\\
+2\epsilon(\eta)\left|k\right|_1 \sum_{j=0}^{n} \varphi\left(\left|x(j)\right|\right).
\end{multline*}
Therefore, defining $S_n:=\sum_{j=0}^n \varphi\left(\left|x(j)\right|\right)$, we have
\begin{equation*} 
\sum_{j=1}^{n+1} \varphi\left(\left|x(j)\right|\right)
\leq \big(\varphi_x^*+C(\eta)\big)(n+1)
+\frac{1}{1+\eta}\sum_{j=1}^{n+1} \varphi\big((1+\eta)\left|H(j)\right|\big)+2\epsilon(\eta)\left|k\right|_1S_n, \end{equation*}
and so, as $S_n\leq S_{n+1}$, we have
\begin{equation*}
S_{n+1}\leq \varphi\left(\left|x(0)\right|\right)+\big(\varphi_x^*+C(\eta)\big)(n+1)
+\frac{1}{1+\eta}\sum_{j=1}^{n+1}\varphi\big((1+\eta)\left|H(j)\right|\big)+2\epsilon(\eta)\left|k\right|_1S_{n+1}.
\end{equation*}
Since $1-2\epsilon(\eta)\left|k\right|_1>0$, we have, for all $n\geq 0$
\begin{multline*}
 \big(1-2\epsilon(\eta)\left|k\right|_1\big) \sum_{j=0}^{n+1}\varphi\left(\left|x(j)\right|\right)
\leq \varphi\left(\left|x(0)\right|\right) 
+\big(\varphi_x^*+C(\eta)\big)(n+1)\\
+\frac{1}{1+\eta}\sum_{j=1}^{n+1}\varphi\big((1+\eta)\left|H(j)\right|\big).
\end{multline*}
Hence, for $n\geq 1$
\begin{equation*}
\sum_{j=0}^{n}\varphi\left(\left|x(j)\right|\right)\leq (1+\eta)\varphi\left(\left|x(0)\right|\right) 
+\big(\varphi_x^*+C(\eta)\big)n(1+\eta)+\sum_{j=1}^{n} \varphi\big((1+\eta)\left|H(j)\right|\big)
\end{equation*}
and consequently,
\begin{align*} 
\limsup_{n\to\infty}\frac{1}{n} \sum_{j=0}^{n}\varphi\left(\left|x(j)\right|\right)
&\leq \big(\varphi_x^*+C(\eta)\big)(1+\eta)
+ \limsup_{n\to\infty}\frac{1}{n} \sum_{j=1}^{n} \varphi\big((1+\eta)\left|H(j)\right|\big)\\
&<+\infty.
\end{align*}
By assumption, this proves part (i). To prove part (ii), let $\epsilon\in(0,1)$ be such that 
$2\epsilon\left|k\right|_1<1$ and $\epsilon$ obeys \eqref{eq.eta}.
Rearranging \eqref{eq.x}, taking the triangle inequality and using \eqref{eq.fast} gives
\begin{equation*} 
\left|H(n+1)\right|\leq \left|x(n+1)\right|+F(\epsilon)\left|k\right|_1+\epsilon\sum_{j=0}^{n}\left|k(n-j)\right|\left|x(j)\right|. 
\end{equation*}
Letting $n\geq N_1$ with $N_1$ given by \eqref{eq.sumkj}, and $k_n(j)$ defined by \eqref{eq.knj}, we obtain
\begin{equation*}
\left|H(n+1)\right|\leq \left|x(n+1)\right|+F(\epsilon)\left|k\right|_1+\epsilon\left|k\right|_1 \sum_{j=0}^{n}k_n(j)\left|x(j)\right|.
\end{equation*}
Now, define $\theta_1=\epsilon\left|k\right|_1$, $\theta_2=1-2\epsilon\left|k\right|_1$, so $\theta_1, \theta_2\in (0,1)$ and $1-\theta_1-\theta_2=\epsilon\left|k\right|_1$.\\
Since $\varphi$ is an increasing function, by Jensen's Inequality (Lemma~\ref{lemma.Jensen}), we obtain
\begin{align*}
\lefteqn{
\varphi\left(\left|H(n+1)\right|\right)}
\\
&\leq \varphi\bigg(\left|x(n+1)\right|+F(\epsilon)\left|k\right|_1+\epsilon\left|k\right|_1 \sum_{j=0}^{n}k_n(j)\left|x(j)\right|\bigg)\\
&=\varphi\bigg(\theta_1\left(\frac{F(\epsilon)\left|k\right|_1}{\theta_1}\right)+\theta_2(1+\eta)\left|x(n+1)\right|+(1-\theta_1-\theta_2)\sum_{j=0}^{n}k_n(j)\left|x(n-j)\right|\bigg)\\
&\leq \theta_1\varphi\left(\frac{F(\epsilon)\left|k\right|_1}{\theta_1}\right)+\theta_2\varphi\big((1+\eta)\left|x(n+1)\right|\big)\\
&\qquad\qquad+(1-\theta_1-\theta_2)\varphi\bigg(\sum_{j=0}^{n}k_n(j)\left|x(n-j)\right|\bigg).
\end{align*}
Applying Lemma~\ref{lemma.Jensen} a second time gives
\begin{align*}
\lefteqn{\varphi\left(\left|H(n+1)\right|\right)}\\
&\leq \theta_1\varphi\left(\frac{F(\epsilon)\left|k\right|_1}{\theta_1}\right)+\theta_2\varphi\big((1+\eta)\left|x(n+1)\right|\big)+\epsilon\left|k\right|_1\sum_{j=0}^{n}k_n(j)\varphi\big(\left|x(n-j)\right|\big)\\
&=\theta_1\varphi\left(\frac{F(\epsilon)\left|k\right|_1}{\theta_1}\right)+\theta_2\varphi\big((1+\eta)\left|x(n+1)\right|\big)
\\
&\qquad \qquad
+\epsilon\left|k\right|_1\sum_{j=0}^{n} \left(\frac{\left|k(j)\right|}{\sum_{l=0}^{n}\left|k(l)\right|}\varphi\big(\left|x(n-j)\right|\big)\right).
\end{align*}
For $n\geq N_1$, by \eqref{eq.sumkj} we have 
$\sum_{j=0}^{n}\left|k(j)\right|\geq \frac{1}{2}\left|k\right|_1$. Therefore, for $n\geq N_1$ 
\begin{multline*}
\varphi\left(\left|H(n+1)\right|\right)\leq \theta_1\varphi\left(\frac{F(\epsilon)\left|k\right|_1}{\theta_1}\right)+\theta_2\varphi\big((1+\eta)\left|x(n+1)\right|\big)\\+2\epsilon\sum_{j=0}^{n}\left|k(j)\right|\varphi\big(\left|x(n-j)\right|\big).
\end{multline*}
Define 
\begin{equation*} 
D(\eta):=\theta_1\left(\epsilon(\eta)\right)\varphi\left(\frac{F(\epsilon)\left|k\right|_1}{\theta_1\epsilon(\eta)}\right)=\epsilon(\eta)\left|k\right|_1\varphi\left(\frac{F(\epsilon(\eta))}{\epsilon(\eta)}\right)
\end{equation*}
and
$\varphi_H^*:=\max_{{1}\leq{j}\leq{N_1+1}} \varphi\left(\left|H(n+1)\right|\right)$. 
Then for $0\leq n\leq N_1$, $\varphi\left(\left|H(n+1)\right|\right)\leq \varphi_H^*$ and for $n\geq N_1$ we obtain
\begin{multline*}
\varphi\left(\left|H(n+1)\right|\right)\leq \varphi_H^*+D(\eta)+\frac{1}{1+\eta}\varphi\big((1+\eta)\left|x(n+1)\right|\big)\\
+2\epsilon(\eta)\sum_{j=0}^{n}\left|k(j)\right|\varphi\left(\left|x(n-j)\right|\right). \end{multline*}
Hence for $n\geq 0$ 
\begin{multline*} \varphi\left(\left|H(n+1)\right|\right)\leq \varphi_H^*+D(\eta)+\frac{1}{1+\eta}\varphi\big((1+\eta)\left|x(n+1)\right|\big)\\+2\epsilon(\eta)\sum_{j=0}^{n}\left|k(j)\right|\varphi\left(\left|x(n-j)\right|\right). \end{multline*}
and thus for $N\geq 0$, we have
\begin{align*} 
\lefteqn{\sum_{n=1}^{N+1}\varphi\left(\left|H(n)\right|\right)}
\\
&=\sum_{n=0}^{N}\varphi\left(\left|H(n+1)\right|\right)\\
&\leq \big(\varphi_H^*+D(\eta)\big)(N+1)+\frac{1}{1+\eta}\sum_{n=0}^{N}\varphi\big((1+\eta)\left|x(n+1)\right|\big)\\
&\qquad+2\epsilon(\eta)\sum_{n=0}^{N}\sum_{j=0}^{n}\left|k(n-j)\right|\varphi(\left|x(j)\right|)\\
&=(\varphi_H^*+D(\eta))(N+1)+\frac{1}{1+\eta}\sum_{n=0}^{N}\varphi\big((1+\eta)\left|x(n+1)\right|\big)\\
&\qquad+2\epsilon(\eta)\sum_{j=0}^{N}\sum_{n=j}^{N}\left|k(n-j)\right|\varphi(\left|x(j)\right|)\\
&\leq\left(\varphi_H^*+D(\eta)\right)(N+1)+\frac{1}{1+\eta}\sum_{n=0}^{N}\varphi\big((1+\eta)\left|x(n+1)\right|\big)\\
&\qquad+2\epsilon(\eta)\left|k\right|_1\sum_{j=0}^{N}\varphi(\left|x(j)\right|)\\
\end{align*}
Hence
\begin{align*} 
\lefteqn{\sum_{n=1}^{N+1}\varphi\left(\left|H(n)\right|\right)}
\\
&\leq \left(\varphi_H^*+D(\eta)\right)(N+1)+\frac{1}{1+\eta}\sum_{n=1}^{N+1}\varphi\big((1+\eta)\left|x(n)\right|\big)\\
&\qquad+2\epsilon(\eta)\left|k\right|_1\sum_{n=0}^{N+1}\varphi\big((1+\eta)\left|x(n)\right|\big)\\
&\leq \left(\varphi_H^*+D(\eta)\right)(N+1)+\frac{1}{1+\eta}\sum_{n=1}^{N+1}\varphi\big((1+\eta)\left|x(n)\right|\big)\\
&\qquad+\left(1-\frac{1}{1+\eta}\right)\sum_{n=0}^{N+1}\varphi\big((1+\eta)\left|x(n)\right|\big).
\end{align*}
since $2\epsilon(\eta)\left|k\right|_1=1-\frac{1}{1+\eta}$.
Thus for $n\geq 1$ 
\begin{equation*} \sum_{j=1}^{n}\varphi\big(\left|h(j)\right|\big)\leq \left(\varphi_H^*+D(\eta)\right)n+\sum_{j=0}^{n}\varphi\big((1+\eta)\left|x(j)\right|\big). \end{equation*}
This yields
\begin{align*} \limsup_{n\to\infty}\frac{1}{n}\sum_{j=1}^{n}\varphi\left|h(j)\right|&\leq \varphi_H^*+D(\eta)+\limsup_{n\to\infty}\frac{1}{n}\sum_{j=0}^{n}\varphi\big((1+\eta)\left|x(j)\right|\big)<+\infty, \end{align*}
by hypothesis. This proves part (ii) of Theorem~\ref{theorem.ergodic}.


\end{document}